\newtheorem{prop}{Proposition}[section] 
\newtheorem{lem}[prop]{Lemma}
\newtheorem{theo}[prop]{Theorem}
\newtheorem{cor}[prop]{Corollary}
\newtheorem{defin}[prop]{Definition}
\numberwithin{equation}{section}
\newcommand{\rhs}{f}
\newcommand{\bc}{g}
\newcommand{\beq}{\begin{eqnarray}}
\newcommand{\beqq}{\begin{eqnarray*}}
\newcommand{\eeq}{\end{eqnarray}}
\newcommand{\eeqq}{\end{eqnarray*}} 
\newcommand{\iid}{\emph{iid}\xspace}
\newcommand{\lln}{strong law of large numbers\xspace}
\newcommand{\clt}{central limit theorem\xspace}
\title{Unbiased `walk-on-spheres' Monte Carlo methods\\ for the fractional Laplacian}
\author{ {Andreas E. Kyprianou}\thanks{Department of Mathematical Sciences, University of Bath, Claverton Down, Bath, BA2 7AY, UK. } $^,$\thanks{Supported by EPSRC grant EP/L002442/1.} \and
   Ana Osojnik\thanks{Mathematical Institute,
University of Oxford,
Woodstock Road,
Oxford
OX2 6GG, UK.\newline
\mbox{\quad} Email:
\texttt{a.kyprianou@bath.ac.uk}, \texttt{anaosojnik@gmail.com}, 
\texttt{t.shardlow@bath.ac.uk}. }
    \and
   {Tony Shardlow$^*$}
   }
 \date{\today}
\begin{document}
%%%%%%%%%%%%%%%%%%%%%%%%%%%%%%%%%%%%%%%%%%%%%%%%%%%%%%%%%%%%%%%%

%%%%%%%%%%%%%%%%%%%%%%%%%%%%%%%%%%%%%%%%%%%%%%%%%%%%%%%%%%%%%%%%
\maketitle
\begin{abstract}
	\noindent We consider Monte Carlo methods for simulating solutions to the analogue of the Dirichlet boundary-value problem in which the Laplacian is replaced by the fractional Laplacian and boundary conditions are replaced by conditions on the exterior of the domain. Specifically, we consider the analogue of the  so-called  `walk-on-spheres' algorithm. In the diffusive setting, this entails sampling the path of Brownian motion as it uniformly exits a sequence of spheres maximally  inscribed in the domain. As this algorithm would otherwise never end, it is truncated when the `walk-on-spheres' comes within $\varepsilon>0$ of the boundary.  In the setting of the fractional Laplacian, the role of Brownian motion is replaced by an isotropic $\alpha$-stable process with $\alpha\in(0,2)$. A significant difference to the Brownian setting is that the stable processes will exit spheres by a jump rather than hitting their boundary. This difference ensures that disconnected domains may be considered and that, unlike the diffusive setting, the algorithm ends after an almost surely finite number of steps. 
\end{abstract}

\section{Introduction}

We start by  recalling the classical Dirichlet problem in $d$-dimensions and re-examining a, now, classical Monte Carlo algorithm that is used to numerically simulate its solution. Suppose that $D$ is a  domain in $\mathbb{R}^d$, $d\geq 2$,  with sufficiently smooth boundary.  We are interested in finding $u\colon D\to \mathbb{R}$ such that
\begin{gather}
	\begin{aligned}
		\Delta u(x) & = 0,    & \qquad  x & \in D,          \\
		u(x)        & = \bc(x), & x         & \in \partial D, 
	\end{aligned}\label{Dirichlet}
\end{gather} 
where $\bc$  is a  given continuous function on the boundary. 
%  Such boundary-value problems are fundamental to a great many applications, including, for example, \autocite{Frisch1974-xx,Droniou2010-gk,Caffarelli2010-ro, Biler2011-rk,Biler1998-an}.   
Feynman--Kac representation tells us that, for example, if $u\in C^2(\overline{D} )$ is a solution to \eqref{Dirichlet}, then
\begin{equation}
	\label{FK}  
	u(x) = \mathbb{E}_x[\bc(W_{\tau_D}) ],%-\mathbb{E}_x\left[\int_0^{\tau_D} \rhs(B_s)\,{\rm d}s\right]
	\qquad x\in D,
\end{equation} 
where $\tau_D \coloneqq \inf\{t>0 : W_t \not\in D\}$ and $W\coloneqq (W_t, t\geq 0)$ is  standard $d$-dimensional Brownian motion with probabilities $(\mathbb{P}_x, x\in\mathbb{R}^d)$. 
 
The representation \eqref{FK} suggests that solutions to \eqref{Dirichlet} can be generated numerically via straightforward Monte Carlo simulations of the path of $W$ until first exit from $D$. That is to say, if $(W^{i}_t, t\leq \tau^{i}_D)$, $i \in \mathbb{N}$ are \iid copies of $(W_t, t\leq \tau_D)$ issued from $x\in D$, then, by the  \lln, 
\begin{equation}
	\lim_{n\to\infty}\frac{1}{n}\sum_{i=1}^n \bc(W^{i}_{\tau^{i}_D}) % - \int_0^{\tau^{i}_D} \rhs(B_s) \,{\rm d}s\right] 
	= u(x),\qquad \text{almost surely.}
	\label{FKMC}
\end{equation}
For practical purposes, since it is impossible to take the limit, one truncates the series of estimates for large $n$ and the \clt gives $\mathcal{O}(1/n)$ upper bounds on the variance of the $n$-term sum, which serves as a numerical error estimate.%\geq N\in\mathbb{N}$.

Although forming the fundamental basis of most Monte Carlo methods for diffusive Dirichlet-type problems, \eqref{FKMC} is an inefficient numerical approach. Least of all, this is because the Monte Carlo simulation of $u(x)$ is independent for each $x\in D$. Moreover, it is unclear how  exactly to simulate the path of a Brownian motion on its first exit from $D$, that is to say, the quantity $W_{\tau_D}$. This is because of  the fractal properties of Brownian motion, making its path difficult to simulate. This introduces additional numerical errors over and above that of Monte Carlo simulation.

A method proposed by \autocite{Mu}, for the case that $D$ is convex,  sub-samples special points along  the path of Brownian motion to the boundary of the domain $D$. The method does not require a complete simulation of its path and  takes advantage of the distributional symmetry of Brownian motion. In order to describe the so-called `walk-on-spheres', we need to first introduce some notation. We may thus  
set $\rho_0 = x$ for $x\in D$ and define $r_1$  to be the radius of the largest sphere inscribed in $D$ that is centred at $x$. This sphere we will call $S_1 = \{y\in\mathbb{R}^d\colon |y-\rho_0|  = r_1\}$. To avoid special cases, we henceforth assume that the surface area of $S_1\cap \partial D$ is zero (this excludes, for example, the case that $x = 0$ and $D$ is a sphere centred at the origin).

Now set $\rho_1\in D$ to be a point uniformly distributed on $S_1$ and note that, given the assumption in the previous sentence, $\mathbb{P}_x(\rho_1\in \partial D) = 0$. Construct the remainder of the sequence $(\rho_n,\,n
\geq 1)$ inductively. Given $\rho_{n-1}$, we  define the radius, $r_n$, of the largest sphere inscribed in $D$ that is centred at $\rho_{n-1}$. Calling this sphere $S_n$, we have that $S_n = \{y\in \mathbb{R}^d \colon |y-\rho_{n-1}| =r_n\}$. We now select $\rho_n$ to be a point that is uniformly positioned on $S_n$. Once again, we note that if $\rho_{n-1}\in D$ almost surely, then the uniform distribution of both $\rho_{n-1}$ and $\rho_{n}$  ensures that $\mathbb{P}(\rho_{n}\in \partial D) = 0$.
Consequently, the sequence $\rho_n$ continues for all $n\geq1$. In the case that $\rho_n$ approaches the boundary, the sequence of spheres $S_n$  become arbitrarily small in size.

Thanks to the strong Markov property and the stationary and independent increments of Brownian motion, it is straightforward to prove the following result.
 
\begin{lem}
	Fix $x\in D$ and define $\rho'_1 = W_{\tau_{S'_1}}$, where $\tau_{S'_1}=\inf\{t>0 \colon
	 W_t \in S'_1 \}$ and $S'_1$ is the largest sphere, centred at $x$, inscribed in $D$. For $n\geq 2$, given $\rho'_{n-1}\in D$, let $\rho'_n = W_{\tau_{S'_n}}$, where $\tau_{S'_n}=\inf\{t>0 \colon W_t \in S'_n \}$ and $S'_n$ is the largest sphere, centred at $\rho'_{n-1}$. Then the sequences $(\rho_n, n\geq 0)$ and $(\rho'_n, n\geq 0)$ have the same law. 
\end{lem}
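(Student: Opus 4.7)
The plan is to prove by induction on $n$ that, for each $n$, $(\rho'_1,\ldots,\rho'_n)$ has the same joint law as $(\rho_1,\ldots,\rho_n)$. The two crucial ingredients are the rotational invariance of Brownian motion (so that when $W$ is started at the centre of a sphere, its first hitting point on that sphere is uniformly distributed there) and the strong Markov property applied at the hitting times $\tau_{S'_n}$.

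For the base case $n=1$, we note that $W$ is issued from $x$, which is the centre of $S'_1$, and $\tau_{S'_1}$ is the first hitting time of this sphere. Since $W$ is rotationally symmetric about its starting point, and $S'_1$ is centred at $x$, the distribution of $W_{\tau_{S'_1}}$ is invariant under every rotation of $\mathbb{R}^d$ about $x$; the only such probability measure supported on $S'_1$ is the uniform distribution. Thus $\rho'_1$ is uniform on $S'_1 = S_1$, matching the definition of $\rho_1$. The assumption that the surface area of $S_1\cap \partial D$ is zero together with continuity of paths shows $\rho'_1\in D$ almost surely, so the recursive construction does not break down at step $1$.

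For the inductive step, suppose $(\rho'_1,\ldots,\rho'_{n-1}) \stackrel{\mathrm{d}}{=} (\rho_1,\ldots,\rho_{n-1})$ and that $\rho'_{n-1}\in D$ almost surely. The radius $r'_n$ and sphere $S'_n$ are measurable functions of $\rho'_{n-1}$ defined in exactly the same way as $r_n$ and $S_n$ are defined from $\rho_{n-1}$. The hitting time $\tau_{S'_n}$ is a stopping time for the natural filtration of $W$, so by the strong Markov property, conditional on $\mathcal F_{\tau_{S'_{n-1}}}$ (in particular, conditional on $\rho'_{n-1}$), the post-$\tau_{S'_{n-1}}$ process is an independent Brownian motion issued from $\rho'_{n-1}$. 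Applying the same rotational invariance argument as in the base case, but now centred at $\rho'_{n-1}$, gives that $\rho'_n = W_{\tau_{S'_n}}$ is, conditionally on $\rho'_{n-1}$, uniformly distributed on $S'_n$ — exactly the conditional law of $\rho_n$ given $\rho_{n-1}$. The boundary-measure-zero argument, combined with the fact that $\rho'_{n-1}$ is uniformly distributed on $S'_{n-1}$ (so its conditional law on $\partial D$ vanishes), shows $\rho'_n\in D$ almost surely, allowing the induction to continue.

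The only delicate point is verifying that $S'_n$ and the corresponding radius are genuinely determined by $\rho'_{n-1}$ in the same functional manner as $S_n$ from $\rho_{n-1}$, and that the rotational invariance can be invoked conditionally — this is a routine application of the strong Markov property together with the spherical symmetry of the law of $W$ under $\mathbb{P}_y$ for any starting point $y$. No new probabilistic input beyond these two standard facts is required, and combining the one-step conditional equalities yields the equality of the full joint laws, hence of the sequences.
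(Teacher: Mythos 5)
Your proof is correct and follows exactly the route the paper has in mind: the paper offers no written proof, asserting only that the result is ``straightforward'' from the strong Markov property and the stationary, independent (and isotropic) increments of $W$, which is precisely the inductive one-step argument you spell out. Your additional care over $\mathbb{P}(\rho'_n\in\partial D)=0$ matches the paper's informal remark on the same point, so nothing is missing or different in substance.
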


\noindent As an immediate consequence, $\lim_{n\to\infty}\rho_n$ almost surely exists and, moreover, it it equal in distribution to $W_{\tau_D}$. The sequence $\rho\coloneqq (\rho_n, n\geq 0)$ may now replace the role of $(W_t, t\leq \tau_D)$ in \eqref{FK}, and hence in (\ref{FKMC}), albeit that one must stop the sequence $\rho$ at some finite $N$. By picking a threshold $\varepsilon>0$, we can choose $N(\varepsilon)$ as a cutoff for the sequence $\rho$ such that $N(\varepsilon) = \min\{n\geq 0: \inf_{z\in\partial D}|\rho_n -z|\leq \varepsilon\}$. Intuitively, one is inwardly `thickening' the boundary $\partial D$ with an `$\varepsilon$-skin' and stopping once the walk-on-spheres hits the  $\varepsilon$-skin. As the sequence $\rho$ is random, $N(\varepsilon)$ is also random.  Starting with Theorem 6.6 of \autocite{Mu} and the classical computations in \autocite{Mo}, it is known that  
$
\mathbb{E}_x[N(\varepsilon)] = \mathcal{O}(|\!\log\varepsilon|).
$
To be more precise, we have the following result.

\begin{theo}\label{muller} Suppose that $D$ is a convex domain.
	There exist constants $c_1,c_2>0$ such that $\mathbb{E}_x[N(\varepsilon)] \leq c_1\,\abs{\log \varepsilon} + c_2$, $\varepsilon\in(0,1)$. 
\end{theo}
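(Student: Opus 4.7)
The plan is to bound $\mathbb{E}_x[N(\varepsilon)]$ by comparing $\log d_n$, where $d_n := \inf_{z\in\partial D}|\rho_n-z|$, to a one-dimensional random walk with iid, negatively-drifted increments, and then invoke standard first-passage theory. The main input is a convexity-based pathwise one-step inequality that controls the drift of $\log d_n$.

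Given $\rho_n \in D$, let $z_n\in\partial D$ achieve $d_n=|\rho_n-z_n|$, set $e_n:=(\rho_n-z_n)/d_n$, and let $H_n$ be the closed supporting half-space of $D$ bounded by the hyperplane through $z_n$ with inward normal $e_n$ (so $D\subset H_n$). The perpendicular segment from any $\rho\in D$ to $\partial H_n$ must first cross $\partial D$, giving $d(\rho,\partial D)\leq d(\rho,\partial H_n)$ for every $\rho\in D$. Writing $\rho_{n+1}=\rho_n+d_n V_{n+1}$ with $V_{n+1}$ uniform on $S^{d-1}$ and independent of $\mathcal{F}_n$, the distance from $\rho_{n+1}$ to $\partial H_n$ equals $d_n(1+U_{n+1})$ where $U_{n+1}:=e_n\cdot V_{n+1}$; hence
$d_{n+1}\leq d_n(1+U_{n+1}), \qquad \log d_{n+1}-\log d_n \leq X_{n+1}:=\log(1+U_{n+1}).$
Rotational invariance makes $U_{n+1}$ independent of $\mathcal{F}_n$ with a fixed distribution depending only on $d$ (density proportional to $(1-u^2)^{(d-3)/2}$ on $[-1,1]$), so $(X_n)_{n\geq 1}$ is iid; Jensen applied to the strictly concave $\log$ and the non-degenerate mean-zero $U_1$ gives $c_d:=\mathbb{E}[X_1]<0$ (for example $c_2=-\log 2$ via $\int_0^\pi\log\sin\phi\,d\phi=-\pi\log 2$), and direct integration against the explicit density yields $\mathbb{E}[X_1^2]<\infty$.

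Put $S_n:=X_1+\cdots+X_n$. Iterating the one-step inequality gives $\log d_n\leq \log d_0+S_n$ pathwise, so with $L:=\log(d_0/\varepsilon)\geq 0$,
$T := N(\varepsilon) \;\leq\; T' := \inf\{n\geq 0 : S_n\leq -L\},$
and it suffices to estimate $\mathbb{E}[T']$, the first-passage time below $-L$ of an iid random walk with strictly negative drift $c_d$. Wald's identity yields $\mathbb{E}[S_{T'}]=c_d\,\mathbb{E}[T']$, whence
$\mathbb{E}[T'] = (L+\mathbb{E}[\mathcal{O}])/|c_d|, \qquad \mathcal{O}:=-S_{T'}-L \geq 0.$
The expected overshoot $\mathbb{E}[\mathcal{O}]$ is bounded uniformly in $L$ by a constant $C_d$ depending only on the step distribution; this is a standard renewal-theory fact that uses $\mathbb{E}[X_1^2]<\infty$. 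Combining with $d_0\leq\operatorname{diam}(D)$ gives
$\mathbb{E}_x[N(\varepsilon)] \leq \mathbb{E}[T'] \leq \frac{|\log\varepsilon|+\log\operatorname{diam}(D)}{|c_d|}+C_d,$
which is of the claimed form. The chief technical point is the uniform overshoot bound $\mathbb{E}[\mathcal{O}]=O(1)$; everything else reduces to the convexity-based distance inequality and routine integral estimates against the density of $U_1$ on $[-1,1]$.
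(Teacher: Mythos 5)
Your proposal is correct, and its geometric core coincides with the paper's: convexity provides a supporting hyperplane at the nearest boundary point, the distance to that hyperplane dominates the distance to $\partial D$, and one step of the walk multiplies the hyperplane distance by $1+\cos\theta$ with an isotropic angle $\theta$ --- this is exactly the paper's relation $\zeta_1=\zeta_0(1-\cos\theta_1)$ written with the inward rather than outward normal. Where you genuinely diverge is in converting this drift into a bound on $\mathbb{E}_x[N(\varepsilon)]$. You pass to the additive walk $S_n=\sum_{i\le n}\log(1+U_i)$, iid with strictly negative mean (indeed $-\log 2$ when $d=2$) and finite variance, and then use Wald's identity plus a uniform bound on the expected overshoot at first passage below $-L$. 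The paper instead argues multiplicatively: it shows $\mathbb{E}[\sqrt{\zeta_{n+1}}\mid \zeta_n]\le\lambda\sqrt{\zeta_n}$ with $\lambda<1$ uniformly in $\varepsilon/\zeta_n$ (in your notation, it exploits the fractional moment $\mathbb{E}[e^{X_1/2}]=\mathbb{E}[\sqrt{1+U_1}]<1$), caps the distance at $\varepsilon$ so that the stopped value is exactly $\varepsilon$, and then applies optional stopping and Jensen to the supermartingale $\lambda^{-(n\wedge N')}\sqrt{\zeta_{n\wedge N'}}$. The trade-off: your route is a clean first-passage argument, but its one nontrivial ingredient --- $\mathbb{E}[\mathcal{O}]\le C_d$ uniformly in $L$ --- is delegated to renewal theory (descending ladder heights with finite second moment plus Lorden's inequality); that is true and citable, but it is precisely the issue the paper's $\varepsilon$-cap and square-root Lyapunov function are designed to sidestep, and the paper's method yields explicit elementary constants. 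Two small tidies: make explicit that $\log(1+U_1)$ is integrable near $u=-1$ against the density $(1-u^2)^{(d-3)/2}$ (so $c_d>-\infty$, which also feeds the finiteness of $\mathbb{E}[T']$ needed for Wald), and replace $\log\operatorname{diam}(D)$ by $\log d_0$ in the final constant, since the constants are allowed to depend on $x$ and $D$ and the diameter is infinite for unbounded convex domains, which the statement does not exclude.
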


\noindent The Monte Carlo simulation (\ref{FKMC}) can now be replaced by one based on simulating the quantity 
$
\bc(\rho_{N(\varepsilon)})$, $\rho_0 = x\in D$,
which, in turn, is justified  by the \lln:
\begin{equation}
	\label{WoSMC1}
	\lim_{n \to\infty} \frac{1}{n}\sum_{i = 1}^n \bc(\rho^{i}_{N^{i}(\varepsilon)}) %
	= \mathbb{E}_x \bp{\bc(\rho_{N(\varepsilon)})}%
	\approx \mathbb{E}_x[\bc(W_{\tau_D})]= u(x),\qquad{\text{a.s.,}}
\end{equation}
where $\varepsilon>0$ is some threshold and $(\rho^{i}_n, n\leq N^{i}(\varepsilon))$, $i\geq 0$ are \iid copies of the walk-on-spheres process stopped at a distance $\varepsilon$ or smaller from $\partial D$. Formally speaking,  a convention is required to evaluate $\bc$ just inside the boundary $\partial D$ in (\ref{WoSMC1}). In many cases, $\bc$ can be evaluated without introducing any additional bias \autocite{Given1997-qr,Hwang2001-wc}.
		
\bigskip
		
The Laplacian serves as the infinitesimal generator of Brownian motion, in the sense that, for appropriately smooth functions $\phi\colon \mathbb{R}^d\to \mathbb{R}$,%\red{changed $\bc$ to $\phi$ to avoid confusion with exterior data above} 
\begin{equation}
	\lim_{t\to0} \frac{\mathbb{E}_x[\phi(W_t)] - \phi(x)}{t} =\frac{1}{2}\Delta\phi(x), \qquad x\in\mathbb{R}^d.
	\label{IG}
\end{equation}
Intuitively speaking, this explains an underlying connection between the Dirichlet problem (\ref{Dirichlet}) and the Feynman--Kac representation of the solution (\ref{FK}).
In this paper, we consider the analogue of (\ref{Dirichlet})  when the operator $ \Delta/2$ is replaced by the fractional Laplacian $-(-\Delta)^{\alpha/2}$ for $\alpha\in (0,2)$. In this case, the fractional Laplacian corresponds, in the same sense as (\ref{IG}), to an isotropic stable L\'evy process with index $\alpha$. This is a strong Markov process  with stationary and independent increments, say $X = (X_t, t\geq 0)$ with probabilities $(\mathbb{P}_x, x\in\mathbb{R}^d)$, whose semi-group is represented by the Fourier transform
		
\[
	\mathbb{E}_0\bp{{\rm e}^{{\rm i}\langle\theta ,X_t\rangle}}%
	= {\rm e}^{-|\theta|^\alpha t}, \qquad \theta\in \mathbb{R}^d, \;t\geq 0,
\]
where $\langle \cdot,\cdot \rangle$ represents the usual Euclidian inner product. Stable processes enjoy an isotropy in the following sense: if $U$ is any orthogonal matrix in $\mathbb{R}^{d\times d}$, then $(UX_t, t\geq 0)$ under $\mathbb{P}_0$ has the same law as $(X, \mathbb{P}_0)$. Moreover,  we have the following important  scaling property: for all $c>0$, 
\begin{equation}\label{levyscaling}
	((cX_{c^{-\alpha} t}, t\geq 0), \mathbb{P}_0) \text{ is equal in law to }((X_t, t\geq 0), \mathbb{P}_0).
\end{equation}
		
In dimension two or greater,  the operator $-(-\Delta)^{\alpha/2}$ can be expressed in the form  
\[
	-(-\Delta)^{\alpha/2} u(x)  =-\frac{2^\alpha \,\Gamma((d+\alpha)/2)}{\pi^{d/2}\,\Gamma(-\alpha/2)} \lim_{\varepsilon\downarrow0}\int_{\mathbb{R}^d\backslash B(0,\varepsilon)}\frac{[u(y)- u(x)]}{|y-x|^{d+\alpha}}\,{\rm d}y,\qquad x\in \mathbb{R}^d,
\]
where $B(0,\varepsilon) = \{x\in\mathbb{R}^d: |x|<\varepsilon\}$ and $u$ is smooth enough for the limit to make sense.

Noting that $-(-\Delta)^{\alpha/2}$ is no longer a local operator, the analogous formulation of (\ref{Dirichlet}) needs a little more care. In particular, the boundary condition on the domain $D$ is no longer stated on $\partial D$, but must now be stated on the complement of $D$, written $D^{\rm c}$. To avoid pathological cases, we must assume throughout that $D^{\rm c}$ has positive $d$-dimensional Lebesgue measure.  The Dirichlet problem for $-(-\Delta)^{\alpha/2}$ requires one to find $u\colon D \to\mathbb{R}$ such that
\begin{gather}
	\begin{aligned}
		-(-\Delta )^{\alpha/2}u(x) & = 0, & \qquad x & \in D, &   \\
		u(x)                       & = \bc(x),      & x      & \in  D^{\rm c}, 
	\end{aligned} 
	\label{aDirichlet}
\end{gather}
where $\bc$ is a  suitably regular function. % $\bc\colon D^{\rm c}\to\mathbb{R}$.
%Again, $\bc$ is assumed to be continuous function, and additionally it must be bounded.
The fractional Dirichlet problem and variants thereof appear in many applications, in particular in physical settings where 
anomalous dynamics occur and where  the spread of mass grows faster than linearly in time. Examples include  turbulent fluids, contaminant transport in fractured rocks, chaotic dynamics and disordered quantum ensembles; see \citep{FracDy, AnTr, flights}. The numerical analysis of \eqref{aDirichlet} is no less deserving than in the diffusive setting.

Just as with the classical Dirichlet setting, the solution to \eqref{aDirichlet} has a Feynman--Kac representation, expressed as an expectation at first exit from $D$ of the associated stable process.
The  theorem below is proved in this paper in a probabilistic way. Similar statements and  proofs we found in the existing literature  take a more analytical perspective.  See for example the review in \autocite{bucur} as well as the monographs \autocite{BH}, \autocite{BV} and \autocite{BBKRSV}, the articles \autocite{B99}, \autocite{R-O1} and \autocite{R-O2} and references therein.
		
We say a real-valued function $\phi$ on a Borel set $S\subset \mathbb{R}^d$ belongs to $L^1_\alpha(S)$ if it is a measurable function that satisfies  
\begin{equation}
	\int_{S}\frac{|\phi(x)|}{1+|x|^{\alpha + d}}\,{\rm d}x <\infty.
	\label{i-test}
\end{equation}
%\red{delete non-negative assumption from the above defn? can it be removed from the next thm}
		
\begin{theo}\label{corr} For dimension $d\geq 2$, suppose that $D$ is a  bounded domain in $\mathbb{R}^d$ and that $\bc$ is a continuous function in $L^1_\alpha(D^\mathrm{c})$.% such that
	%\[
	%\int_{\mathbb{R}^d}\frac{|\bc(x)|}{1+|x|^{\alpha +d}}\,{\rm d}x<\infty.
	%\]  
	Then there exists a  unique continuous solution to   \eqref{aDirichlet}  in $L^1_\alpha(\mathbb{R}^d)$, which is given by 
																																																																														            
	\[
		u(x)  = \mathbb{E}_x[\bc(X_{\sigma_D})],\qquad x\in D,
	\]
	where $X = (X_t, t\geq 0)$ is an isotropic stable L\'evy process with index $\alpha$ and $\sigma_D = \inf\{t>0: X_t\not\in D\}$. 
	% {\color{red} Actually, I am not sure that this is rigorously known. BUT I begin to wonder if the WoS method actually proves that this is the unique solution by construction!}
\end{theo}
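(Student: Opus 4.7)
The plan is to set $u(x) := \mathbb{E}_x[\bc(X_{\sigma_D})]$ for $x \in D$, extend by $u \equiv \bc$ on $D^{\rm c}$, and verify in turn that (i) the expectation is finite and $u \in L^1_\alpha(\mathbb{R}^d)$, (ii) $u$ is continuous on $\mathbb{R}^d$, (iii) $u$ satisfies the fractional PDE on $D$, and then (iv) prove uniqueness by a martingale argument. For (i), I would invoke the Ikeda--Watanabe formula to describe the law of $X_{\sigma_D}$ under $\mathbb{P}_x$ as a density $P_D(x, \cdot)$ on $D^{\rm c}$. Because $D$ is bounded, $P_D(x, y)$ is controlled above by the explicit Riesz--Poisson kernel of any enclosing ball $B(0,R) \supset D$, which decays like $|y|^{-(d+\alpha)}$ for large $|y|$, uniformly in $x \in D$. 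Combined with $\bc \in L^1_\alpha(D^{\rm c})$, this makes $u$ finite, locally bounded on $D$, and in $L^1_\alpha(\mathbb{R}^d)$.

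For (ii), interior continuity follows from continuity of $P_D(x, \cdot)$ in $x$ combined with dominated convergence via the uniform tail bound. Continuity across $\partial D$ is delicate: for $x_n \in D$ tending to $x_0 \in \partial D$, one must show $u(x_n) \to \bc(x_0)$. The probabilistic ingredient is that every point of $\partial D$ is regular for $D^{\rm c}$: because the L\'evy jump measure of $X$ charges every neighbourhood of every point and $D^{\rm c}$ has positive Lebesgue measure, the Blumenthal zero-one law forces $\mathbb{P}_{x_0}(\sigma_D = 0) = 1$. Together with quasi-left-continuity of $X$ and continuity of $\bc$, this allows one to pass to the limit inside the expectation.

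For (iii), applying the strong Markov property at $\sigma_{B(x,r)}$ for any closed ball $\overline{B(x,r)} \subset D$ yields the mean-value identity $u(x) = \mathbb{E}_x[u(X_{\sigma_{B(x,r)}})]$. The corresponding Poisson kernel is explicit and, up to an isotropic prefactor, exhibits precisely the singular kernel $|y-x|^{-(d+\alpha)}$ of $-(-\Delta)^{\alpha/2}$. Subtracting $u(x)$ from both sides, splitting the resulting integral into a near-$x$ piece (handled by local H\"older regularity of $u$ inherited from the mean-value property together with the scaling \eqref{levyscaling}) and a far piece (handled by the $L^1_\alpha$ bound), and letting $r \downarrow 0$, one recovers $-(-\Delta)^{\alpha/2} u(x) = 0$. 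For (iv), if $w$ is the difference of two such solutions, then $w \equiv 0$ on $D^{\rm c}$ and Dynkin's formula shows that $(w(X_{t \wedge \sigma_D}))_{t \geq 0}$ is a uniformly integrable $\mathbb{P}_x$-martingale; boundedness of $D$ ensures $\sigma_D < \infty$ almost surely, so optional stopping yields $w(x) = \mathbb{E}_x[w(X_{\sigma_D})] = 0$.

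The main obstacles I expect lie in (ii) and (iii). For (ii), promoting regularity of boundary points to convergence of the exit position $X_{\sigma_D}$ (not merely the exit time) requires careful use of quasi-left-continuity together with the fact that $X$ leaves $D$ by a jump; the approximating sequence $x_n \to x_0 \in \partial D$ also demands an Ikeda--Watanabe tail estimate that is \emph{uniform} in the starting point. For (iii), the singular-integral limit passage must be argued using only continuity of $u$; a bootstrap via the mean-value property to local H\"older regularity of $u$ on $D$ is the natural route, but it is the most technical step.
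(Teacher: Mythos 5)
Your overall architecture (define $u(x)=\mathbb{E}_x[\bc(X_{\sigma_D})]$, check integrability, continuity, the equation, then uniqueness) matches the paper's, but at the two decisive steps you take a different and, as written, incomplete route. The first genuine gap is your boundary-continuity argument in (ii): you claim every $x_0\in\partial D$ is regular for $D^{\mathrm c}$ because the jump measure charges all neighbourhoods and $D^{\mathrm c}$ has positive Lebesgue measure, invoking the Blumenthal zero--one law. Regularity is a \emph{local} property of $D^{\mathrm c}$ at $x_0$, and these two global facts do not imply $\mathbb{P}_{x_0}(\sigma_D=0)=1$: for $\alpha<2$ there are bounded domains with irregular boundary points (for instance a planar disc with a polar slit removed when $\alpha<1$; the process started near the slit never sees it, so the exit law ignores the data prescribed on the slit). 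So this step fails as stated and cannot be repaired by the argument you give; some genuinely different justification of continuity up to the boundary is needed (the paper argues continuity of the Feynman--Kac functional via path regularity and classical Riesz-potential theory rather than via pointwise regularity of all boundary points).

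The second set of gaps is in (iii) and (iv), and here your route diverges most from the paper's. You propose to verify $-(-\Delta)^{\alpha/2}u=0$ directly by an $r\downarrow 0$ limit in the mean-value identity, which requires interior H\"older (in fact better) regularity of $u$; the ``bootstrap from the mean-value property'' is precisely the hard analytic content and is only asserted, not proved (it is true that $\alpha$-harmonic functions are smooth inside $D$, but that must be proved or cited). The paper avoids this entirely: by the strong Markov property it localises $u$ on the maximal ball $B(x')\subset D$ around each $x'$ and invokes the known explicit solution of the ball problem (Theorems 2.10 and 3.2 of \autocite{bucur}), so the equation is only ever verified on balls where the Poisson kernel is explicit. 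Similarly, in (iv) your appeal to Dynkin's formula presumes that a solution known only to be continuous, in $L^1_\alpha(\mathbb{R}^d)$ and satisfying the equation pointwise lies in (or can be treated as in) the domain of the generator; that is not automatic and is exactly the point at issue. The paper instead obtains the martingale property of $\hat u(X_{\sigma_{B_k}\wedge\sigma_D})+\int_0^{\sigma_{B_k}\wedge\sigma_D}\rhs(X_s)\,\mathrm{d}s$ along the walk-on-spheres from the ball-case Feynman--Kac identity, and then justifies optional stopping by dominating the martingale by $|\bc(X_{\sigma_{B^*}})|+c_1+c_2\sigma_{B^*}$ for an enclosing ball $B^*$. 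If you adopt the paper's ball-localisation both for existence and for building the uniqueness martingale, your steps (i) (which is essentially the paper's estimate via the enclosing ball and the Blumenthal--Getoor--Ray law) and the rest of the skeleton go through; as written, however, (ii), (iii) and (iv) each contain a genuine missing step.
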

\noindent The case that $D$ is a ball can be found, for example, in Theorem 2.10 of \autocite{bucur}. We exclude the case $d=1$ because convex domains are intervals for which exact solutions are known; see again \autocite{bucur} or the forthcoming Theorem \ref{BGR} lifted from \autocite{BGR}.		
Theorem \ref{corr} follows in fact as a corollary of a more general result stated later in Theorem \ref{hasacorr}, which is proved in the Appendix.
		
%To see why, we note that the arguments for the Brownian case differ little in this case once one appeals to It\^o calculus to resolve the matter. Indeed, considering $u(X_t)$, $t\geq 0$ as a semi-martingale, we have 
%\[
%u(X_{\sigma_D}) = u(x) + \int_0^{\sigma_D}  \hat{u}(X_{s-})\,{\rm d}X_s + \sum_{s\leq \sigma_D} [u(X_s) - u(X_{s-}) - \Delta X_s \hat{u}(X_{s-})],\qquad t<\sigma_D.
%\]
		
\bigskip
		
In this article, our objective is to demonstrate that the walk-on-spheres method may also be extended to the setting of the Dirichlet problem with fractional Laplacian. In particular, we will show that, thanks to various distributional and path properties of stable processes, notably spatial homogeneity, isotropy, self-similarity and that it exits $D$ by a jump, simulations can be made unbiased, without the need to truncate the algorithm at an $\varepsilon$ tolerance. 
%Moreover, dimension is not of concern as far as numerical analysis is concerned.
Whilst there exist many methods for numerically examining the fractional Dirichlet problem \eqref{aDirichlet}, which mostly appeal to classical methodology for diffusive operators, see for example \autocite{NOS, HO, DEG, ZRK, BS, SB,1608.08443} to name some but not all of the existing literature, we believe that no other work appeals to the walk-on-spheres algorithm in this context.

The remainder of this paper is structured as follows. 
In the next section, we give a brief historical review of Theorem \ref{muller} and its proofs as well as providing a new, short proof.
In Section~\ref{stable_paths}, we show how an old result of \autocite{BGR} can be used to give an exact simulation of the paths of stable processes. 
In Section~\ref{WoSfL}, we introduce  the walk-on-spheres algorithm for the fractional-Laplacian Dirichlet problem. We start with domains $D$ that are convex but not necessarily bounded. Our main result shows that the walk-on-spheres algorithm ends in an almost-surely finite number of steps (without the need of approximation), which can be stochastically bounded by a geometric distribution. Moreover, the parameter of this  distribution does not depend on the starting point of the walk-on-spheres algorithm. Section~\ref{non_convex} looks at extensions to non-convex domains. In Section \ref{inhomogenous}, we consider a fractional Poisson equation, where  an inhomogeneous term is introduced on the right-hand side of the fractional-Laplacian Dirichlet problem (\ref{aDirichlet}). Appealing to related results concerning the resolvent of stable processes until first exit from the unit ball, we are able to develop the walk-on-spheres algorithm further. Finally in Section \ref{numerics}, we discuss some numerical experiments to illustrate the methods developed as well as their implementation.

\section{The classical setting}
		
As promised above, we give a brief historical review of the classical walk-on-spheres algorithm and, below, for completeness, we provide a proof of Theorem \ref{muller}, which, to the authors' knowledge, is new. 
The walk-on-spheres algorithm was first derived by  \autocite{Mu}. In Theorem 6.1 of his article, Muller claims that one can compare $\mathbb{E}_x[N(\varepsilon)]$ with the mean number of steps of a walk-on-spheres process that is stopped when it reaches an $\varepsilon$-skin of  the tangent hyperplane that passes through a point on $\partial D$ that is closest to $x$. Although the claim is correct (indeed the proof that we give for our main result Theorem \ref{main} below provides the basis for an alternative justification of this fact), it is not entirely clear from Muller's reasoning. \autocite{Mo} uses Muller's comparison of the mean number of steps to prove Theorem \ref{muller}. He considers the total expected occupation of an appropriately time-changed version of Brownian motion when crossing each sphere of the walk until touching the aforementioned $\varepsilon$-skin of  the tangent hyperplane. Using the  self-similarity of Brownian motion, Motoo argues that the time-change during passage  to the boundary of each  sphere is such that the expected occupation across each step is uniformly bounded below. It follows that the sum of these weighted expected occupations can be bounded below by $\mathbb{E}_x[N(\varepsilon)]$. On the other hand, the aforesaid sum can also be bounded above by the total expected time-changed occupation until exiting the half-space (as defined by the tangent plane), which can be computed explicitly, thereby providing the $|\!\log\varepsilon|$ comparison. 
		
Following the foundational work of Muller and Motoo, there have been many reproofs and generalisations of the original algorithm to different processes and domain types. 
Notable in this respect is the work of \autocite{Mi} and \autocite{BB} who consider non-convex domains and \autocite{Sa}, who appeals to renewal theory to analyse the growth in $\varepsilon$ of the mean number of steps to completion of the  walk-on-spheres algorithm. His method also allows for  variants of the algorithm in which the sphere sizes do not need to be optimally  inscribed in $D$. Later,  \autocite{ST} gives an elementary proof of the $|\!\log \varepsilon|$ bound.  Mascagni and co-authors have extensively developed the walk-on-spheres algorithm in applications; see for example \autocite{HMG,Given2001-lu,Given2002-vs,Mackoy2013-es,Hwang2001-wp}.

\begin{proof}[Proof Theorem \ref{muller}] We break the proof into two parts. In the first part, we analyse the walk-on-spheres process over one step, by considering the distance of the next point in the algorithm from the orthogonal tangent hyperplane of the first point. (Note the existence of a tangent hyperplane requires convexity of the domain.) In the second part of the proof, we use this analysis to build a supermartingale, from which the desired result follows via optional stopping.

	\medskip
																																																																														            
	For the first part of the proof, we start by introducing notation. For any $x = (x_1, \dots, x_d)\in\mathbb{R}^d$ such that $x_1>0$, let us write $V(x) = \{(z_1, \dots, z_d)\in\mathbb{R}^d\colon z_1>0\}$ for the open half-space containing $x$ and denote its boundary $\partial V(x) = \{(z_1, \dots, z_d)\in\mathbb{R}^d\colon z_1=0\}$. Suppose that we choose our coordinate system so that  $x\in D$ is such that $\rho_0 = x = (x_1, 0,\dots, 0)$ and $\partial V(\rho_0)$ is a tangent hyperplane to both $D$ and $S_1$. This assumption comes at no cost as, thanks to isotropy and spatial homogeneity of Brownian motion. Let us define $\zeta_0$, the  orthogonal distance of $\rho_0$ from $\partial V(\rho_0)$.  With the assumed choice of coordinate system,  write $\zeta_0 \coloneqq r_1 =  x_1 = |x| = |\rho_0|$ and define 
	\[
		\zeta_1 = \min\Bigl\{\varepsilon, \inf_{z\in\partial V(\rho_0)}|\rho_1-z|\Bigr\};
	\] that is, the minimum of $\varepsilon$ and the orthogonal distance of $\rho_1$ from $\partial V(\rho_0)$. Next, define $\theta_1$, the angle that subtends at $\rho_0$ between $\rho_1$ and the origin $(0,\dots,0)$ and recall that symmetry implies that $\theta_1$ is uniformly distributed on $[0,2\pi]$. Simple geometric considerations tell us that 
	\begin{equation}
		\zeta_1=x_1 - r_1 \sin\left(\frac{\pi}{2} - \theta_1\right) = \zeta_0 - \zeta_0 \sin\left(\frac{\pi}{2} - \theta_1\right) = \zeta_0(1-\cos(\theta_1)).
		\label{zeta1}
	\end{equation}
	This provides an implicit expression for $\theta_1$ in terms of the orthogonal distance $\rho_0$ from the nearest tangent hyperplane. 
	See Figure \ref{fig:class_proofa}.
	\begin{figure}[h]
		\centering
		\includegraphics[width=0.8\linewidth]{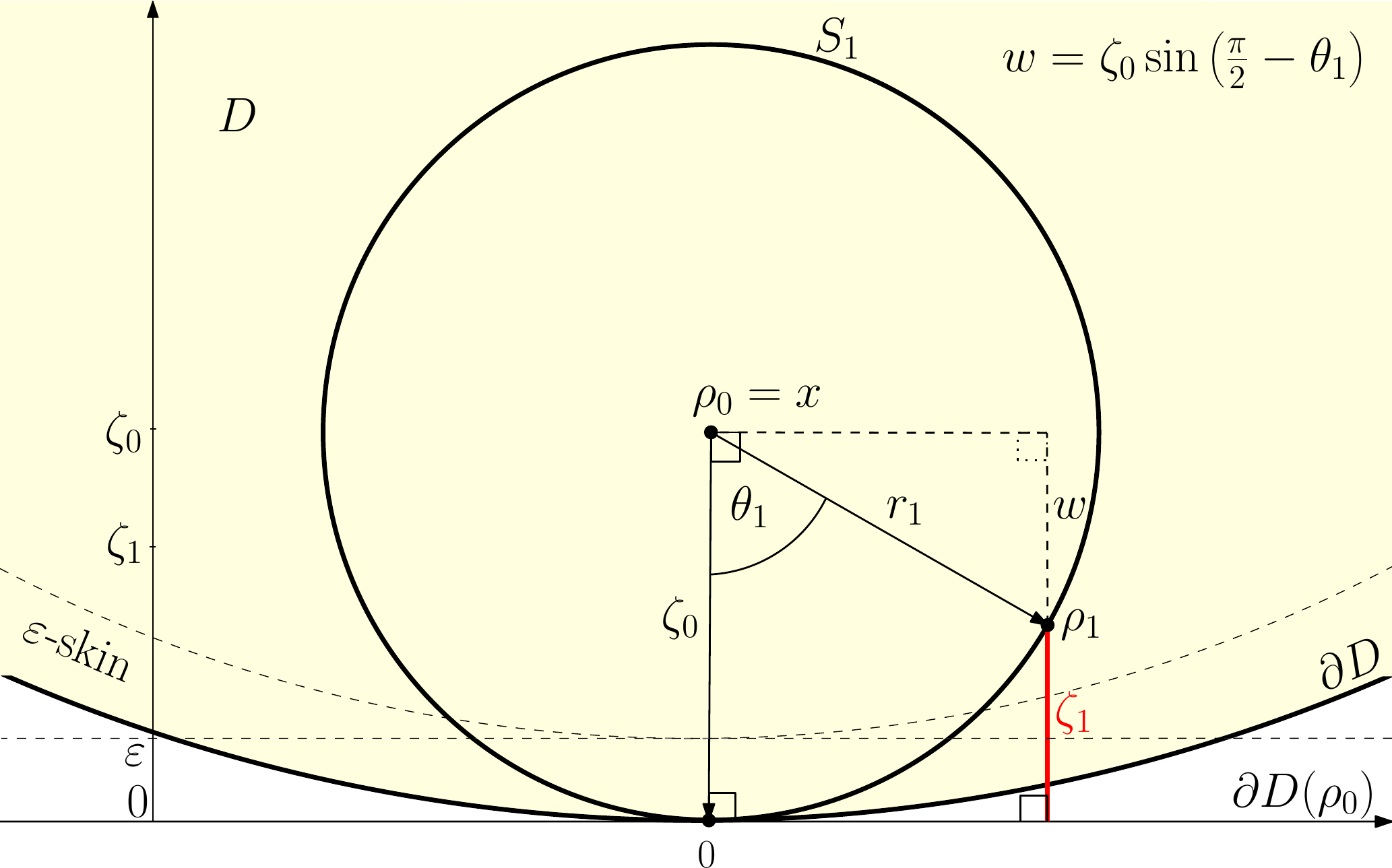}
		\caption{Geometric setting of the proof}
		\label{fig:class_proofa}
	\end{figure}
																																																																														            
	Assuming that $\zeta_0>\varepsilon$, thanks to isotropic symmetry, the walk-on-sphere algorithm will end at the first step if $\theta_1$ lies in a certain critical interval dictated by the choice of skin thickness $\varepsilon$. We can compute this critical (and obviously) symmetric interval as a function of $\zeta_0$, say $(-\theta^*(\zeta_0), \theta^*(\zeta_0))$, where 
	\begin{equation}
		\theta^*(\zeta_0) = \arccos\pp{\frac{\zeta_0 - \varepsilon}{\zeta_0}}.
		\label{theta*}
	\end{equation}
																																																																														            
	A quantity that will be of interest to us in order to complete the proof is the expectation 
	$
	\mathbb{E}_{x}[\sqrt{\zeta_1}] = \mathbb{E}_{\rho_0}[\sqrt{\zeta_1}].
	$
	To this end, we compute
	\begin{align}
		\mathbb{E}_{\rho_0}\left[\sqrt{\zeta_1}\right] & \leq \sqrt{\varepsilon}\,\mathbb{P}_{\rho_0}\pp{\theta_1\in (-\theta^*(\zeta_0), \theta^*(\zeta_0))}+ 
		\mathbb{E}_{\rho_0}\left[ \mathbf{1}_{(\theta_1\not\in (-\theta^*(\zeta_0), \theta^*(\zeta_0)))}\sqrt{\zeta_1}\right]\notag\\
		                                               & = \sqrt{\varepsilon}\, \frac{\theta^*(\zeta_0)}{\pi}                                               
		+ \frac{1}{\pi}\int_{\theta^*(\zeta_0)}^\pi \sqrt{\zeta_0(1-\cos(u))}\,{\rm d}u\notag\\
		                                               & \eqqcolon\Lambda({\varepsilon}/{\zeta_0})\,\sqrt{\zeta_0},                                         
		\label{ineq}
	\end{align}
	where $\mathbf{1}_S$ denotes the indicator function on the set $S$. Using the primitive ${\displaystyle\int} \sqrt{1- \cos(u)}\,{\rm d}u$ $ =-2\, \sqrt{1-\cos(u)}\,\cot(u/2)$, we have
	\[
		\Lambda(u)  =\sqrt{u}\,\frac{\arccos(1-u)}{\pi}
		+\frac{2}{\pi}\sqrt{u} \,\cot\pp{\frac{\arccos(1-u)}{2}}.
	\]  
	One easily verifies that there is a constant $\lambda\in(0,1)$ such that $\sup_{u\in [0,1]}\Lambda(u)<\lambda$.
	%Note that, as long as $\zeta_0>\varepsilon$, we have that 
	%\[
	%\lambda(\zeta_0)  \leq  \frac{\theta^*(\zeta_0)}{\pi }
	%+h(\pi) - h(\theta^*(\zeta_0))
	%\]
																																																																														            
	\medskip
																																																																														            
	Next we move to the second part of the proof. At each step of the walk-on-spheres,  we can construct the quantities $\zeta_{n+1}$, the orthogonal distance of  $\rho_{n+1}$ to the tangential hyperplane that passes through the closest point on $\partial D$ to $\rho_n$; and $\theta_n$, the angle that is subtended at $\rho_n$ between the aforesaid point and $\rho_{n+1}$. Note that $\varepsilon$ is an absorbing state for the sequence $(\zeta_n, n\geq 0)$ in the sense that, if $\zeta_n = \varepsilon$, then $\zeta_{n+k} = \varepsilon$ for all $k\geq 0$. We may thus write $N(\varepsilon) \leq  N'(\varepsilon):=\min\{n\geq 0: \zeta_n = \varepsilon\}$.
																																																																														            
	By the strong Markov property and the spatial homogeneity of Brownian motion given the analysis leading to (\ref{ineq}), we have, on $\{n<N(\varepsilon)\}$,
	\[
		\mathbb{E}\left[\left.\sqrt{\zeta_{(n+1)\wedge N(\varepsilon)}}\,\right|\zeta_0, \dots, \zeta_n\right] =  \mathbb{E}\left[\left.\sqrt{\zeta_{(n+1)\wedge N(\varepsilon)}}\,\right| \zeta_n\right] \leq \Lambda({\varepsilon}/{\zeta_n})\sqrt{\zeta_n}< \lambda \sqrt{\zeta_n}.
	\]
	% \red{changed $b$ to $n$ in third-last $\zeta_n$}
	As a consequence the process $\left(\lambda^{-(n\wedge N(\varepsilon))}\sqrt{\zeta_{n\wedge N(\varepsilon)}}, n\geq 0\right)$ is a supermartingale. The optional-sampling theorem and Jensen's inequality give us
	\[
		\varepsilon \lambda^{-\mathbb{E}_x[N'(\varepsilon)]}\geq \mathbb{E}_{x}[\lambda^{-N'(\varepsilon)}\varepsilon]\leq \sqrt{r_1},
		\qquad x\in D.
	\]
	The result now follows by taking logarithms.
\end{proof}

\section{Exact simulation of stable paths}\label{stable_paths}
		
The key ingredient to the walk-on-spheres in the Brownian setting is the knowledge that spheres are exited continuously and uniformly on the boundary of spheres. In the stable setting, the inclusion of path discontinuities means that the process will exit a sphere by a jump. 
The analogous  key observation that makes our analysis possible is the following result, which gives the distribution of a stable process issued from the origin, when it first exits a unit sphere.
		
\begin{theo}[Blumenthal, Getoor, Ray, 1961]\label{BGR}
	Suppose that $B(0,1)$ is a unit ball centred at the origin and write $\sigma_{B(0,1)} = \inf\{t>0 : X_t \not\in B(0,1)\}$. Then,
	\[
		\mathbb{P}_0(X_{\sigma_{B(0,1)}}\in \mathrm{d}y) = \pi^{-(d/2+1)}\,\Gamma(d/2)\,\sin(\pi\alpha/2)\,\left|1-|y|^2\right|^{-\alpha/2}|y|^{-d}\,{\rm d}y, \qquad  |y|>1.
	\]
\end{theo}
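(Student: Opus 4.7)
By the isotropy of $(X,\mathbb{P}_0)$ established in the introduction, the law of $X_{\sigma_{B(0,1)}}$ is invariant under every orthogonal map of $\mathbb{R}^d$, so it admits a density of the form $y\mapsto f(|y|)$ supported on $\{|y|>1\}$. My plan is to identify the full Poisson kernel $P(x,y)$ of $X$ on the ball for an arbitrary starting point $x\in B(0,1)$, and then specialise to $x=0$.

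For the first step, fix a bounded continuous test function $\phi$ supported in $\{|y|>1\}$. By the strong Markov property, together with Theorem \ref{corr} applied to $D=B(0,1)$, the map $u(x):=\mathbb{E}_x[\phi(X_{\sigma_{B(0,1)}})]$ (extended by $\phi$ outside the ball) is the unique continuous $L^1_\alpha(\mathbb{R}^d)$ solution of the fractional Dirichlet problem \eqref{aDirichlet} with $D=B(0,1)$ and exterior data $\phi$. It therefore suffices to exhibit a single candidate kernel $P(x,y)$ whose associated function $v(x):=\int_{|y|>1}P(x,y)\phi(y)\,\mathrm{d}y$ (extended by $\phi$ outside) also solves this Dirichlet problem; uniqueness will force $v=u$, and hence $P(0,\cdot)=f(|\cdot|)$.

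The natural candidate, motivated by Riesz potential theory, is
\[
P(x,y) = c_{d,\alpha}\left(\frac{1-|x|^2}{|y|^2-1}\right)^{\!\alpha/2}|x-y|^{-d},\qquad x\in B(0,1),\ |y|>1,
\]
with a constant $c_{d,\alpha}$ to be fixed below. Two verifications are then required: (i) for each fixed exterior $y$, $x\mapsto P(x,y)$ is $\alpha$-harmonic on $B(0,1)$; and (ii) $\int_{|y|>1}P(x,y)\,\mathrm{d}y = 1$ for every $x\in B(0,1)$, which, combined with standard continuity estimates as $x$ tends to $\partial B(0,1)$, recovers the correct exterior boundary limit. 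The principal obstacle is (i). I would attack it either via a Riesz composition identity, writing $P(\cdot,y)$ as an integral of the Riesz kernel $|\cdot-z|^{\alpha-d}$ against an explicit auxiliary measure supported outside $B(0,1)$ so that $(-\Delta)^{\alpha/2}$ annihilates the expression inside the ball, or via the Kelvin-type inversion $x\mapsto x/|x|^2$ together with the covariance of the fractional Laplacian under inversion, which reduces the $\alpha$-harmonicity check to a single computation at the origin. A brute-force alternative is to substitute $P$ into the singular-integral representation of $(-\Delta)^{\alpha/2}$ and compute directly.

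Finally, the constant $c_{d,\alpha}$ is pinned down by specialising to $x=0$, where $f(r)=c_{d,\alpha}(r^2-1)^{-\alpha/2}r^{-d}$. Passing to spherical coordinates with $\omega_{d-1}=2\pi^{d/2}/\Gamma(d/2)$ gives
\[
\int_{|y|>1}f(|y|)\,\mathrm{d}y = c_{d,\alpha}\,\omega_{d-1}\int_1^\infty (r^2-1)^{-\alpha/2}r^{-1}\,\mathrm{d}r.
\]
The substitution $s=1-r^{-2}$ reduces the radial integral to $\tfrac{1}{2}B(1-\alpha/2,\alpha/2)=\tfrac{1}{2}\Gamma(1-\alpha/2)\Gamma(\alpha/2)=\pi/(2\sin(\pi\alpha/2))$ by the Euler reflection formula. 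Equating the total mass to $1$ then yields $c_{d,\alpha}=\pi^{-(d/2+1)}\Gamma(d/2)\sin(\pi\alpha/2)$, matching the stated density.
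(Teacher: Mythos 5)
The paper itself offers no proof of Theorem \ref{BGR}; it is imported verbatim from Blumenthal, Getoor and Ray (1961), so your argument has to stand on its own. As written it does not, for two reasons. First, there is a circularity inside this paper's logical structure: you invoke Theorem \ref{corr} (with $D=B(0,1)$) to get uniqueness of the solution of \eqref{aDirichlet} and hence to conclude $v=u$. But Theorem \ref{corr} is obtained as a corollary of Theorem \ref{hasacorr}, whose proof in the Appendix uses Theorem \ref{BGR} explicitly (e.g.\ in the estimate \eqref{worksforsquared} and again, after scaling as in \eqref{rdy}, in the uniqueness part) as well as the ball-case results of \autocite{bucur}, which are essentially the statement you are trying to prove. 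So you may not lean on Theorem \ref{corr} here; you would need an independent uniqueness/maximum-principle argument for $\alpha$-harmonic functions on the ball, or a direct probabilistic derivation of the kernel.

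Second, and more seriously, the mathematical heart of the theorem is exactly the step you defer: the verification that $x\mapsto\bigl((1-|x|^2)/(|y|^2-1)\bigr)^{\alpha/2}|x-y|^{-d}$ is $\alpha$-harmonic in $B(0,1)$ for each fixed $|y|>1$, together with the boundary behaviour needed in your step (ii). You list three possible strategies (Riesz composition, Kelvin inversion, brute-force evaluation of the singular integral) but carry out none of them; each involves a genuinely nontrivial computation, and without one of them the candidate kernel is unsubstantiated, so the proposal proves nothing beyond the normalisation. That normalisation computation is correct (the substitution $s=1-r^{-2}$ and Euler's reflection formula do give $c_{d,\alpha}=\pi^{-(d/2+1)}\Gamma(d/2)\sin(\pi\alpha/2)$), but note it also silently uses that the exit distribution puts full mass on $\{|y|>1\}$, i.e.\ that the process leaves the ball by a jump and does not creep onto the sphere $\partial B(0,1)$; this is true for isotropic $\alpha$-stable processes with $\alpha\in(0,2)$, but it is a fact that must be stated and justified rather than assumed.
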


This result provides a method of constructing precise sample paths of stable processes in phase space (i.e.\ exploring sample paths as ordered subsets of $\mathbb{R}^d$ rather than as functions $[0,\infty]\to \mathbb{R}^d$). Choose a tolerance $\epsilon$ and initial point $X_0 =x$. Denote by $E_1$ a  sampling from the distribution given in Theorem \ref{BGR}. This gives the exit from a ball of radius one when $X$ is issued from the origin. By the scaling property \eqref{levyscaling} and the stationary and independent increments, $x +\epsilon\, E_1$ is distributed as the exit position from a ball of radius $\epsilon$ centred at $x$ when the process is issued from $x$. Hence, we define $X_1=x + \epsilon\, E_1$ and then, inductively for $n\geq 1$,  generate $X_{n+1}$ as the exit point of the ball centred on $X_n$ with radius $\epsilon$ by noting this is equal in distribution to $X_n + \epsilon \, E_{n+1}$, where $E_{n+1}$ is an \iid copy of $E_1$. 
%We end up with a sequence $X_n\real^d$ of samples from $X_t$. 
It is important to remark for later that the value of $\epsilon$ in this algorithm does not need to be fixed and may vary with each step. Note, however, the method does not generate the corresponding time to exit from each ball. Therefore, the sample paths that are produced, whilst being exact in the distribution of points that the stable process will pass through, cannot be
represented graphically in time as there is only an equal mean duration to exiting each sphere. If the tolerance $\epsilon$ is altered on each step, then  even this mean duration feature is lost. 
The method is used to generate  Figure~\ref{fig:sample_path}. 
		
\begin{figure}[h!]
	\centering
	\includegraphics{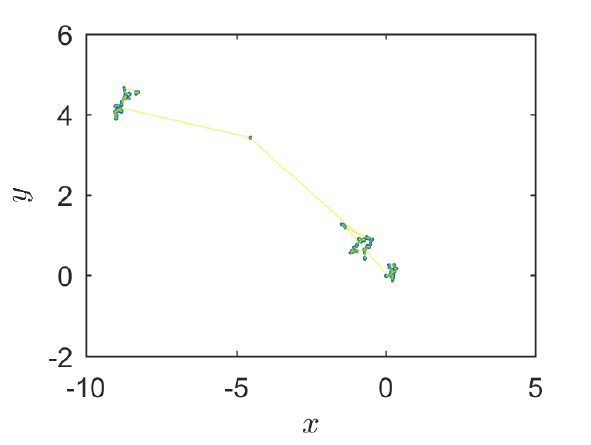}
	\includegraphics{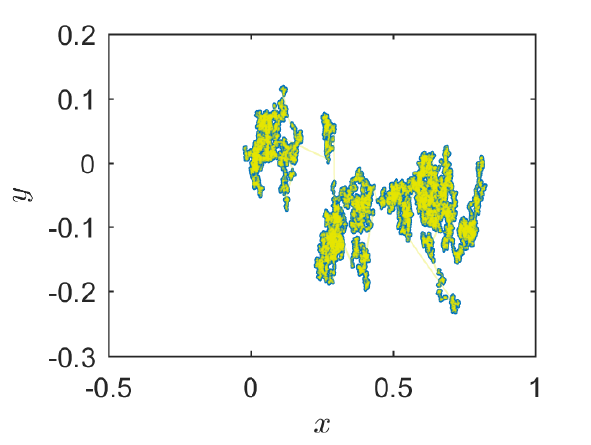}\\
	\includegraphics{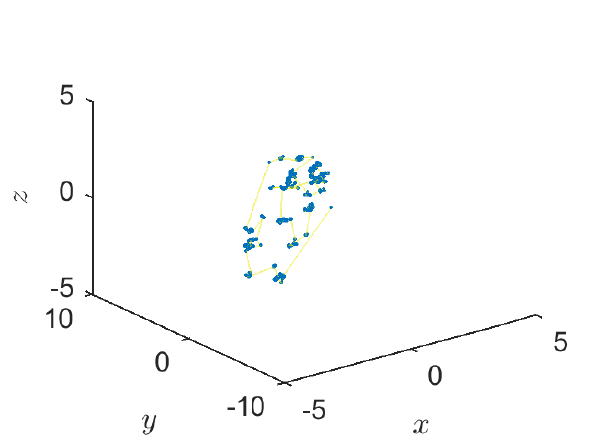}
	\includegraphics{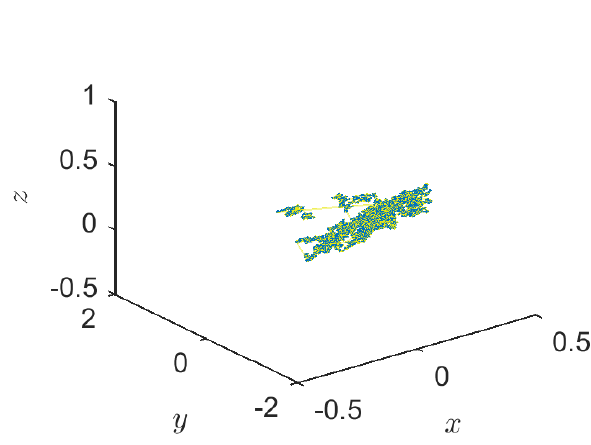}
	\caption{Example sample paths for the $\alpha$-stable Levy process generated by using the exit distribution in Theorem \ref{BGR} for spheres of radius $10^{-6}$. Rows shows sample paths in two- and three-dimensions for $\alpha=0.9$ (left) and $\alpha=1.8$ (right). The yellow lines indicate jumps of the process and blue dots show where the process has been.} \label{fig:sample_path}
\end{figure}
		
On account of  classical Feynman--Kac representation, simulation of solutions to parabolic and elliptic equations involving the fractional Laplacian, and more generally the infinitesimal generator of a L\'evy process are synonymous with the simulation of the paths of the associated stochastic process. On account of the fact that such equations occur naturally in mathematical finance in connection with (exotic) option pricing,  there are already many numerical and stochastic methods in existence for  the general L\'evy setting. The reader is referred, for example, to the books \autocite{Cont, Leven} and the references therein. Other sources offering simulation techniques can be found, e.g.\ \autocite{Polish, Rosinski1, Rosinski2, Rosinski3}. Similarly to works in mathematical finance, they are mostly focused on the approximation of the stable process (and indeed the general L\'evy process) by a compound Poisson process or a power-series representation of the path, with a diffusive component to mimic the effect of small jumps. To our knowledge, however, the walk-on-spheres approach to path simulation has not been used in the context of simulating stable processes to date, nor, as alluded to above, to the end of numerically solving Dirichlet-type problems for the fractional Laplacian. 
%{\color{red} Some more about the 4 papers that Tony found.}

\section{Walk-on-spheres for the fractional Laplacian}\label{WoSfL}
		
We start by describing the walk-on-spheres for the fractional-Laplacian Dirichlet problem  (\ref{aDirichlet}) on a convex domain $D$. The domain $D$ may be unbounded, as long as $D^{\textrm{c}}$ has non-zero measure (even though Theorem \ref{corr} requires boundedness).
% \bigskip		
% \noindent (A): {\it The domain $D$ in (\ref{aDirichlet}) is convex. }
% \bigskip
  Fix $x\in D$. The walk-on-spheres $(\rho_n$, $n\geq 0)$, with $\rho_0 = x$ is defined in a similar way to  the Brownian setting in the sense that, given $\rho_{n-1}$, 
%and consequently $B_n$ and $r_n$, that is the largest %sphere centred at $\rho_{n-1}$ that is circumscribed by %$D$ which has radius $r_n$,  
the distribution of $\rho_{n}$ is selected according to an independent copy of  $X_{\sigma_{B_n}}$ under $\mathbb{P}_{\rho_{n-1}}$, where $B_n = \{x\in\mathbb{R}^d\colon |x - \rho_{n-1}|< r_n\}$ and $\sigma_{B_n} = \inf\{t>0: X_t\not\in B_n\}$. The algorithm comes to an end at the random index $N = \min\{n\geq 0\colon \rho_n\not\in D\}$, again using the standard understanding that $\min\emptyset \coloneqq \infty$. %We are therefore compelled to perform a Monte Carlo simulation of $ \bc(\rho{_{N}}).$ 
See for example the depiction in Figure \ref{4stepsonly}.
		 
\begin{figure}[h]
	\centering
	\includegraphics[width=0.7\linewidth]{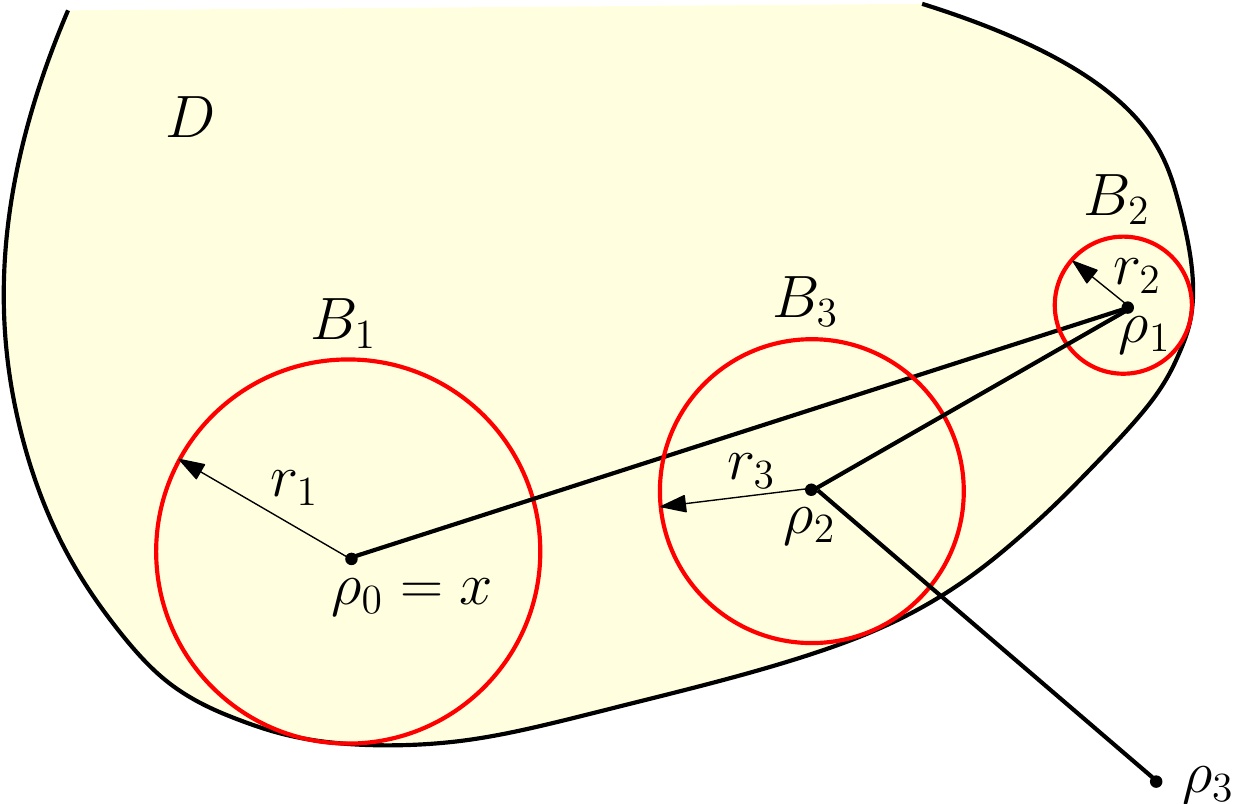}
	\caption{Steps of the walks-on-sphere algorithm until exiting the convex domain $D$ in the stable setting. In this realisation, $N = 3$.}
	\label{4stepsonly}
\end{figure}

Even though the domain $D$ may be unbounded, our main result predicts that, irrespective of the point of issue of the algorithm, there will always be at most a geometrically distributed number of steps (whose parameter also does not depend on the point of issue) before the algorithm ends. 
		
\begin{theo}\label{main} Suppose that $D$ is a convex domain.
	For all $x\in D$, there exists a constant $p = p(\alpha,d)>0$ (independent of $x$ and $D$) and a real-valued random variable $\Gamma$ such that $N\leq \Gamma$ almost surely, where 
	\[
		\mathbb{P}(\Gamma  = k ) = (1-p)^{k-1}p, \qquad k\in\mathbb{N}.
	\]
\end{theo}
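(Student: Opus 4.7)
My plan is to exploit convexity of $D$ to produce, at each step, a uniformly positive lower bound on the probability that the jump of $X$ out of $S_n$ lands outside $D$, and then to invoke the strong Markov property to upgrade this to a geometric stochastic domination. The key ingredient is Theorem \ref{BGR}, used together with spatial homogeneity, isotropy, and the scaling property \eqref{levyscaling}.

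First, I would fix $\rho_{n-1}\in D$ and let $z_{n-1}\in \partial D$ realise the distance $r_n$ from $\rho_{n-1}$ to $\partial D$, so that $B_n$ is internally tangent to $\partial D$ at $z_{n-1}$. By convexity of $D$, there is a hyperplane $H_{n-1}$ through $z_{n-1}$ supporting $D$ from the side of $\rho_{n-1}$; since $z_{n-1}$ is the orthogonal projection of $\rho_{n-1}$ onto $\partial D$, this hyperplane is orthogonal to $\rho_{n-1}-z_{n-1}$, and the open half-space $H_{n-1}^-$ on the far side of $H_{n-1}$ lies entirely inside $D^{\rm c}$.

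Next I would compute
\begin{equation*}
p \coloneqq \mathbb{P}_{\rho_{n-1}}\!\bigl(X_{\sigma_{B_n}} \in H_{n-1}^-\bigr)
\end{equation*}
and show it depends only on $\alpha$ and $d$. Translating $\rho_{n-1}$ to the origin, rescaling by $r_n^{-1}$ via \eqref{levyscaling}, and rotating using isotropy turn $B_n$ into $B(0,1)$ and $H_{n-1}^-$ into $\{y \in \mathbb{R}^d\colon y_1 > 1\}$, so Theorem \ref{BGR} yields
\begin{equation*}
p = \pi^{-(d/2+1)}\,\Gamma(d/2)\,\sin(\pi\alpha/2) \int_{\{y_1>1\}} (|y|^2 - 1)^{-\alpha/2}\,|y|^{-d}\,\mathrm{d}y.
\end{equation*}
Positivity is immediate, and finiteness follows from integrability near $|y|=1$ (since $\alpha/2<1$) and at infinity (since the integrand decays as $|y|^{-d-\alpha}$ with $\alpha > 0$); thus $p\in(0,1)$ depends only on $\alpha$ and $d$.

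Finally, the strong Markov property applied at the exit time from $B_n$ gives
\begin{equation*}
\mathbb{P}\!\bigl(\rho_n \in D \,\big|\, \rho_0,\dots,\rho_{n-1},\, n \leq N\bigr) \leq 1 - p,
\end{equation*}
since $\{\rho_n \in H_{n-1}^-\}\subset \{\rho_n \in D^{\rm c}\}$ and the conditional probability of the former is exactly $p$. Iterating yields $\mathbb{P}(N > n) \leq (1-p)^n$, so $N$ is stochastically dominated by a geometric$(p)$ random variable; the standard quantile coupling of stochastically ordered integer-valued random variables then produces the required $\Gamma$ on an enlarged probability space with $N \leq \Gamma$ almost surely. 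I expect the main subtlety to lie in the uniformity of $p$ across scales and positions of $\rho_{n-1}$: it is precisely the scaling property \eqref{levyscaling} that prevents $p$ from degenerating as $r_n \to 0$ near the boundary, and the convexity of $D$ that provides the hyperplane $H_{n-1}$ needed to isolate an explicit ``escape region'' of uniformly positive measure.
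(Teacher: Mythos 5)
Your argument is correct and follows essentially the same route as the paper: convexity gives the supporting half-space beyond the tangent hyperplane at the nearest boundary point, and Theorem \ref{BGR} together with scaling, isotropy and spatial homogeneity yields the uniform success probability $p(\alpha,d)$, exactly as in the paper's Lemma \ref{scaled} and Corollary \ref{indicators}. The only cosmetic difference is at the end: the paper realises $\Gamma$ directly on the same probability space as the first success time of the Bernoulli indicators $I_V(\rho_k)$, whereas you derive the tail bound $\mathbb{P}(N>n)\leq(1-p)^n$ and invoke a quantile coupling, which is equally valid.
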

\noindent There are a number of remarks that we can make from the conclusion above. 
\begin{itemize}
	\item[1.] Although $\Gamma$ has the same distribution for each $x\in D$, it is not the same random variable for each $x\in D$. As we shall see in the proof of the above theorem, the inequality $N\leq \Gamma$ is derived by comparing each step of the walk-on-spheres algorithm with a sequence of Bernoulli random variables. This sequence of Bernoulli random variables are defined up to null sets which may be different under each  $\mathbb{P}_x$. Therefore, whilst the distribution of $\Gamma$ does not depend on $x$, its null sets do. 
	      	      	      	      	      	      	      	      	      	      	      	      	      	      	      	      	      	      	      	      	      	      	      	      	      	      	      	      	      	      	      	      	      	      	      	      	      	      	      	      	      	      	      	      	      	      	      	      	      	      	      	      	      	      	      	      	      	      	      	      	      	      	      	      	      	      	      	      	      	      	      	      		      		      	      	                                                      
	\item[2.] The stochastic domination in Theorem \ref{main} is much stronger than the usual comparison of the mean number of steps. Indeed, whilst it immediately implies that $\mathbb{E}_x[N] = 1/p$, we can also deduce that there is an exponentially decaying tail in the distribution of the number of steps. Specifically, for any $x\in D$,
	      \[
	      	\mathbb{P}(N >n )%
	      	\leq \mathbb{P}(\Gamma >n )  %
	      	= (1-p)^n, \qquad n\in\mathbb{N}.
	      \]
	\item[3.]  The randomness in the geometric random variables $\Gamma$ is heavily correlated to $N$. The fact that each of the $\Gamma$ are geometrically distributed has the advantage that 
	      \[
	      	\sup_{x\in D}\mathbb{E}_x[N] \leq \sup_{x\in D}\mathbb{E}_x[\Gamma] = \frac{1}{p}.
	      \]
	      However, it is less clear what kind of distributional properties can be said of the random variable 
	      $
	      \sup_{x\in D}\Gamma,
	      $ 
	      which almost surely upper bounds $\sup_{x\in D}N$.
\end{itemize}

Finally, it is worth stating formally that the walk-on-spheres algorithm is unbiased and therefore, providing $\mathbb{E}_x[\bc(X_{\tau_D})]<\infty$, the \lln  applies and a straightforward  Monte Carlo simulation of the solution to \eqref{aDirichlet} is possible. Moreover, providing $\mathbb{E}_x[\bc(X_{\tau_D})^2]<\infty$, the \clt offers the rate of convergence. % in this Monte Carlo approach is optimal.\red{what does optimal mean here?}
		
\begin{cor}\label{rate1}
	When $D$ is  bounded and convex and $\bc$ is continuous and  in $ L^1_\alpha(D^{\mathrm{c}})$,
	\begin{equation}
		\label{WoSMC2}
		\lim_{n \to\infty} \frac{1}{n}\sum_{i = 1}^n \bc(\rho^{i}_{N^{i}}) = \mathbb{E}_x[\bc(\rho_{N})]=\mathbb{E}_x[\bc(X_{\tau_D})] = u(x),
	\end{equation}
	almost surely where
	$(\rho^{i}_n, n\leq N^{i})$, $i\geq 1$ are \iid copies of the walk-on-spheres with $\rho_0^i = x\in D$, $i\geq 1$ and  $u(x)$ is the solution to (\ref{aDirichlet}).
	Moreover, when 
	\begin{equation}
		\int_{D^\mathrm{c}}\frac{\bc(x)^2}{1+|x|^{\alpha+d}}\,{\rm d}x<\infty,
		\label{f2}
	\end{equation}
	then $\operatorname{Var}(\bc(\rho_N))<\infty$ and, in the sense of weak convergence,  
	\[
		\lim_{n\to\infty}n^{1/2}\left(\frac 1n \sum_{i=1}^n  \bc(\rho^{i}_{N^{i}})- u(x)\right)= \operatorname{Normal}(0,  \operatorname{Var}(\bc(\rho_N))).
	\]
\end{cor}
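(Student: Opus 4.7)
The proof breaks into three ingredients: (i) the identification $\mathbb{E}_x[\bc(\rho_N)] = \mathbb{E}_x[\bc(X_{\sigma_D})]$; (ii) the \lln; and (iii) a variance bound that validates the \clt.

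For (i), I would realise the walk-on-spheres as the stable process sampled at a sequence of stopping times. Set $T_0 = 0$ and, inductively, $T_n = T_{n-1} + \sigma_{B_n}\circ\theta_{T_{n-1}}$, where $\theta$ denotes the Markov shift. By the strong Markov property, the conditional law of $X_{T_n}$ given $X_{T_0},\dots,X_{T_{n-1}}$ is the exit distribution of $X$ from $B_n$ started at $X_{T_{n-1}}$, which, by construction, coincides with the conditional law of $\rho_n$ given $\rho_0,\dots,\rho_{n-1}$. Thus $(\rho_n)$ and $(X_{T_n})$ agree in joint distribution. Writing $M = \inf\{n : X_{T_n}\notin D\}$, the inclusion $B_n \subset D$ forces $X_t \in D$ for every $t \in [T_{n-1},T_n)$ with $n\leq M$, so $\sigma_D = T_M$ almost surely. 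Hence $\rho_N$ has the law of $X_{\sigma_D}$, and Theorem~\ref{corr} yields $\mathbb{E}_x[\bc(\rho_N)] = u(x)$ together with $\mathbb{E}_x[|\bc(\rho_N)|] < \infty$. Applying the \lln to the i.i.d.\ copies $\bc(\rho^i_{N^i})$ then delivers (ii).

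For (iii), it suffices to bound $\mathbb{E}_x[\bc(X_{\sigma_D})^2]$. The Ikeda--Watanabe formula expresses the law of $X_{\sigma_D}$ under $\mathbb{P}_x$ via the density
\[
p_x(y) = \kappa_{\alpha,d} \int_D G_D(x,z)\,|z-y|^{-(d+\alpha)}\,dz,\qquad y \in D^c,
\]
where $G_D$ is the Green's function of the killed stable process. Boundedness of $D$ gives $|z-y| \geq c(1+|y|)$ uniformly in $z \in D$ for all sufficiently large $|y|$, so $p_x(y) \leq C_x(1+|y|)^{-(d+\alpha)}$ outside a compact set. On bounded portions of $D^c$, $p_x$ may be singular at $\partial D$ but remains locally integrable---as is seen explicitly in Theorem~\ref{BGR} for the ball---and continuity of $\bc$ makes $\bc^2$ locally bounded there. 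Combining the tail and local estimates, hypothesis~\eqref{f2} yields $\mathbb{E}_x[\bc(X_{\sigma_D})^2] < \infty$, and the classical \clt for i.i.d.\ sums closes the argument. The main obstacle is the density bound: the tail behaviour as $|y|\to\infty$ follows painlessly from boundedness of $D$, but local integrability of $p_x$ near $\partial D$ for a general bounded convex (possibly non-smooth) domain requires appeal to established boundary estimates for the Green's function of the isotropic $\alpha$-stable process rather than a derivation \emph{ab initio}.
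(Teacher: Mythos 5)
Your argument is correct, but the second-moment step goes a genuinely different way from the paper. The paper never touches the Ikeda--Watanabe formula or Green-function estimates: it encloses the bounded domain in a large ball $B^*\supset D$ and splits on $\{\sigma_D=\sigma_{B^*}\}$ versus $\{\sigma_D<\sigma_{B^*}\}$, as in the computation \eqref{worksforsquared} of the Appendix. On the first event the exit position of $D$ coincides with that of $B^*$, whose law is explicit by Theorem \ref{BGR} (after scaling), so the contribution is controlled exactly by the integrability hypothesis; on the second event $X_{\sigma_D}$ lands in the bounded set $B^*\setminus D$, where continuity of $\bc$ gives a uniform bound. Running the same two lines with $\bc^2$ in place of $\bc$, condition \eqref{f2} is precisely what is needed, so the whole variance bound reduces to one explicit density (the ball) plus compactness. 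Your route via the Ikeda--Watanabe representation and the tail bound $|z-y|\geq c(1+|y|)$ also works and is a fair alternative, but the ``main obstacle'' you flag is not one: the exit density integrates to at most one over $D^{\mathrm c}$ (it is a subprobability density), so near $\partial D$ you only need $\bc^2$ locally bounded, which continuity gives on a compact neighbourhood of $\partial D$; no boundary estimates for $G_D$ are required (at most one should note that any possible exit mass on $\partial D$ is harmless since $\bc^2$ is bounded there). Your part (i), realising the walk-on-spheres as $X$ sampled at the successive exit times $T_n$ so that $\sigma_D=T_M$ and $\rho_N\overset{d}{=}X_{\sigma_D}$ (using $N<\infty$ a.s.\ from Theorem \ref{main}), makes explicit an identification the paper leaves implicit in its construction and in the appeal to Theorem \ref{corr}; that is consistent with, and a useful elaboration of, the paper's one-line treatment of the first display.
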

\begin{proof}
	The first part is a straightforward consequence of the earlier mentioned  \lln and the fact that Theorem \ref{corr} ensures that $\mathbb{E}_x[\bc(\rho_N)]=\mathbb{E}_x[\bc(X_{\tau_D})]<\infty$. For the second part, we need to show that \eqref{f2} implies $\mathbb{E}_x[\bc(\rho_N)^2]=\mathbb{E}_x[\bc(X_{\tau_D})^2]<\infty$. 
	However, if we consider the computation in (\ref{worksforsquared}) of the Appendix, which shows that  $\mathbb{E}_x[\bc(X_{\tau_D})]<\infty$ when $\bc$ is continuous and in $L^1_{\alpha}(D^\mathrm{c})$, then it is easy to see that the same statement holds replacing $\bc$ by $\bc^2$. Under finiteness of the second moment, the \clt completes the proof.%That is to say \eqref{f2} is a sufficient condition for $\mathbb{E}_x[\bc(X_{\tau_D})^2]<\infty$ to hold.
	%
	%Next, taking account of the fact that $\mathbb{E}_x[n^{-1}\sum_{i = 1}^n \bc(\rho^{i}_{N^{i}})]=u(x)$, 
	%\begin{align*}
	%\left|\left|\frac{1}{n}\sum_{i = 1}^n \bc(\rho^{i}_{N^{i}}) -u(x)
	%\right|\right|_{L_2(\mathbb{P}_x)}^2 &= \frac{1}{n^2}\mathbb{E}_x\left[\left(\sum_{i = 1}^n \bc(\rho^{i}_{N^{i}})\right)^2\right] - u(x)^2\\
	%&=\frac{1}{n^2}\mathbb{E}_x\left[\sum_{i = 1}^n \bc(\rho^{i}_{N^{i}})^2\right] + \frac{1}{n^2}\mathbb{E}_x\left[\sum_{i <j} \bc(\rho^{i}_{N^{i}})\bc(\rho^{i}_{N^{j}})\right] - u(x)^2\\
	%&=\frac{1}{n}\mathbb{E}_x[\bc(X_{\tau_D})^2] + \frac{n(n-1)}{n^2}u(x)^2 - u(x)^2\\
	%&=\frac{1}{n}\left|\left|\bc(X_{\tau_D})^2 -u(x)
	%\right|\right|_{L_2(\mathbb{P}_x)} 
	%\end{align*}
	% as $n\to\infty$.
	% Next Chebychev's inequality tells us that, for any $\varepsilon>0$,
	% \[
	%\mathbb{P}_x\left[n^{-\varepsilon}\left|\frac{1}{n}\sum_{i = 1}^n \bc(\rho^{i}_{N^{i}}) -u(x)
	%\right|>\delta\right]\leq \frac{1}{n^{1+2\varepsilon}\delta^2} \left|\left|\bc(X_{\tau_D})^2 -u(x)
	%\right|\right|_{L_2(\mathbb{P}_x)} 
	%\]
\end{proof}
\medskip

We now  return to the proof of Theorem \ref{main}. Our approach is to break it  into several parts.  For convenience, we shall henceforth write $X^{(x)}=(X^{(x)}(t)\colon t\geq 0)$ to indicate the dependency of $X$ on its initial position $X_0 = x$ (equivalent to writing $(X, \mathbb{P}_x)$). For any $x = (x_1, \dots, x_d)\in\mathbb{R}^d$ such that $x_1>0$, we have $V(x) = \{(z_1, \dots, z_d)\in\mathbb{R}^d \colon z_1>0\}$ for the open half-space containing $x$ and denote its boundary $\partial V(x) = \{(z_1, \dots, z_d)\in\mathbb{R}^d \colon z_1=0\}$. For any Borel set $A\subset\mathbb{R}^d$, we write 
$
\sigma_A = \inf\{t>0 \colon X_t\not\in A\}.
$
We will typically use in place of $A$ the set $V(x)$ as well as $B(x,1) = \{z\in\mathbb{R}^d\colon |z- x|<1\}$, the unit ball centred at $x\in\mathbb{R}^d$.
Finally write ${\rm\bf i} = (1,0, \dots, 0)\in\mathbb{R}^d$.
		
%\red{ $B_1=B(\mathbf i,1)$ is not defined and is used inconsistently}
\begin{lem} \label{scaled} Without loss of generality (by appealing to the spatial homogeneity of $X$ which allows us to appropriately choose our coordinate system)
	suppose that  $x= |x|\,{
		\rm\bf i}\in D$ is such that $\partial V(x)$ is a tangent hyperplane to both $D$ and $B_1$. Then $X^{(x)}_{\sigma_{B_1}}$ is equal in distribution to $|x|\, X^{(\rm\bf i)}_{\sigma_{B({\rm\bf i},1)}}$ and  $X^{(x)}_{\sigma_{V(x)}}$ is equal in distribution to $|x|\,X^{(\rm\bf i)}_{\sigma_{V(\mathbf{i})
	}}$.
\end{lem}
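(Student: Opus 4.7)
The plan is straightforward: both claims follow from combining spatial homogeneity of $X$ with the self-similar scaling property \eqref{levyscaling}, together with the basic geometric observation that, in the chosen coordinates, the radius of $B_1$ and the distance from $x$ to $\partial V(x)$ both equal $|x|$.

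First I would set up the geometry. With $x = |x|\,\mathbf{i}$ and $\partial V(x) = \{z\in\mathbb{R}^d : z_1 = 0\}$ tangent to $D$, the convexity of $D$ and the definition of $B_1$ as the largest ball centred at $x$ inscribed in $D$ force $r_1 = |x|$, i.e.\ $B_1 = B(x,|x|)$. Moreover, since $\mathbf{i}$ and $x$ both lie on the positive $x_1$-axis, the half-space $V(x)$ coincides with $V(\mathbf{i})$.

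Next, for the ball case, I would argue as follows. Writing $X^{(0)}$ for the process issued from the origin, spatial homogeneity gives $X^{(x)}_t \stackrel{\mathrm{d}}{=} x + X^{(0)}_t$ as processes, so in particular $X^{(x)}_{\sigma_{B_1}} \stackrel{\mathrm{d}}{=} x + X^{(0)}_{\sigma_{B(0,|x|)}}$. Apply \eqref{levyscaling} with $c = 1/|x|$: the time- and space-rescaled process $Y_t := |x|^{-1} X^{(0)}_{|x|^\alpha t}$ is again an isotropic $\alpha$-stable process issued from $0$. Since $y\mapsto |x|^{-1}y$ maps $B(0,|x|)$ bijectively onto $B(0,1)$, the first-exit point transforms accordingly, giving $|x|^{-1} X^{(0)}_{\sigma_{B(0,|x|)}} \stackrel{\mathrm{d}}{=} X^{(0)}_{\sigma_{B(0,1)}}$. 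Rearranging and shifting,
\[
X^{(x)}_{\sigma_{B_1}} \stackrel{\mathrm{d}}{=} x + |x|\, X^{(0)}_{\sigma_{B(0,1)}} = |x|\bigl(\mathbf{i} + X^{(0)}_{\sigma_{B(0,1)}}\bigr) \stackrel{\mathrm{d}}{=} |x|\, X^{(\mathbf{i})}_{\sigma_{B(\mathbf{i},1)}},
\]
where the last step uses spatial homogeneity one more time.

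The half-space case is handled by the same two-line argument: translate by $-x$ so that $V(x)$ becomes the half-space $\{z_1 > -|x|\}$, apply the same rescaling $Y_t := |x|^{-1} X^{(0)}_{|x|^\alpha t}$, which maps $\{z_1 > -|x|\}$ onto $\{z_1 > -1\}$, and then translate by $\mathbf{i}$ to recover $V(\mathbf{i})$. The only thing to verify carefully is that nothing obstructs this limit-free geometric argument in the unbounded case, but the exit times are well-defined $\mathbb{P}_0$-a.s.\ finite (the first coordinate of $X$ is a one-dimensional stable process, which exits every half-line a.s.), so this causes no difficulty. There is no real obstacle here — the lemma is essentially bookkeeping — beyond observing the fortunate geometric coincidence that the single scaling factor $1/|x|$ simultaneously rescales $B_1$ to the unit ball $B(\mathbf{i},1)$ and $V(x)$ to $V(\mathbf{i})$, which is a direct consequence of the tangency assumption.
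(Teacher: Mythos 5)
Your proof is correct and follows essentially the same route as the paper: both rest on the scaling property \eqref{levyscaling} combined with spatial homogeneity, together with the geometric observation that $r_1=|x|$ forces the scale factor $|x|^{-1}$ to simultaneously take $B_1$ to $B(\mathbf{i},1)$ and $V(x)$ to $V(\mathbf{i})$. The only difference is cosmetic — you separate the translation-to-origin, rescale, translate-back steps explicitly, whereas the paper compresses them into the single identity $X^{(x)}_s = |x|\hat X^{(\mathbf{i})}_{|x|^{-\alpha}s}$ and then computes the time-change of the exit time directly.
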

		
\begin{proof}%Without loss of generality we may write $x =  |x|{\rm\bf i}$, for $|x|>0$. 
	The scaling property of $X$ ensures that we can write 
	\begin{equation}
		X^{(x)}_{s} = |x|\hat{X}^{(\mathbf{i})}_{|x|^{-\alpha}s}, \qquad s\geq 0,
		\label{scaling}
	\end{equation}
	where $\hat{X}^{(x)}$ is equal in law to $X^{(x)}$. %Now write $B({\rm\bf i},1) = \{z\in\mathbb{R}^d : |z  - {\rm\bf i}|<1\}$ for the unit ball centred at ${\rm\bf i}$. 
	Note that 
	\begin{align}
		\sigma_{B_1} & = \inf\Bp{t> 0\colon {X}^{(x)}(t)\not\in B(x,\abs{x}) }\notag                                                           \\
		             & = |x|^{\alpha}\,\inf\Bp{|x|^{-\alpha}t> 0\colon |x| \hat{X}^{(\mathbf{i})}(|x|^{-\alpha}t)\not\in B( x,\abs{x}) }\notag \\
		             & = |x|^{\alpha}\,\inf\Bp{u> 0\colon  \hat{X}^{(\mathbf{i})}(u)\not\in B({\rm\bf i},1) }\notag                            \\
		             & \eqqcolon |x|^{\alpha}\,\hat{\sigma}_{B({\rm\bf i},1)}.                                                                 
		\label{tscale}
	\end{align}
	It follows that 
	\begin{equation}
		X^{(x)}_{\sigma_{B_1}} = |x| \hat{X}^{(\mathbf{i})}_{|x|^{-\alpha}  |x|^{\alpha} \hat{\sigma}_{B({\rm\bf i},1) }}\,{\buildrel d \over =}\, |x| {X}^{(\mathbf{i})}_{\sigma_{B({\rm\bf i},1)}},
		\label{scaleB1}
	\end{equation}
	as required. The proof of the second claim follows the same steps and is omitted for the sake of brevity.
	\hfill\end{proof}
																																																																														            
	An important consequence of the previous result is the comparison between the first exit from the largest sphere in $D$ centred at $x$ and the first exit from the tangent hyperplane to the latter sphere.  Recall that $B_n = \{z\in\mathbb{R}^d\colon |z - \rho_{n-1}|< r_n\}$ denotes the $n$th sphere.

	\begin{cor}\label{indicators}Suppose that  $x\in D$ is such that $\partial V(x)$ is a tangent hyperplane to both $D$ and $B_1$. 
		Define under $\mathbb{P}_x$ the indicator random variables 
		\[
			{I}_D = \mathbf{1}_{\{X_{\sigma_{B_1}} \not\in D  \}}\quad\text{ and } \quad {I}_V=\mathbf{1}_{\{X_{\sigma_{B_1}} \not\in V(x)  \}}.
		\]
		Then $\mathbb{P}_x(I_D\geq  I_V)= 1$ and, independently of $x\in D$,  $\mathbb{P}_x(I_V = 1)  =p(\alpha,d)$, %\red{is $\mathbb{P}$ missing a subscript?} 
		where
		\begin{align*}
			p(\alpha,d) & \coloneqq \mathbb{P}_{\mathbf i}(X_{\sigma_{B({\rm\bf i},1)}}\not\in V({\rm\bf i}))                                      \\
			            & =\frac{\Gamma(d/2)}{\pi^{(d+2)/2}}\,\sin(\pi\alpha/2)\,\int_{x_1<-1}\left|1- |x|^2\right|^{-\alpha/2}|x|^{-d}\,{\rm d}x, 
		\end{align*}
		which is a number in $(0,1)$.
	\end{cor}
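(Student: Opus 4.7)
The plan is to handle the three assertions in turn: the domination $I_V\leq I_D$, the fact that $\mathbb{P}_x(I_V=1)$ does not depend on $x$, and the explicit integral formula together with the bound $p(\alpha,d)\in(0,1)$.

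For the first claim, I would exploit convexity of $D$: since $\partial V(x)$ is a tangent hyperplane to $D$ at the point of $\partial D$ closest to $x$, convexity forces $D$ to lie entirely on one side of $\partial V(x)$, and as $x\in D\cap V(x)$ we must have $D\subset \overline{V(x)}$. Theorem~\ref{BGR} together with Lemma~\ref{scaled} shows that the law of $X_{\sigma_{B_1}}$ under $\mathbb{P}_x$ is absolutely continuous with respect to Lebesgue measure, so $\mathbb{P}_x(X_{\sigma_{B_1}}\in\partial V(x))=0$. Hence $\{X_{\sigma_{B_1}}\notin V(x)\}\subseteq\{X_{\sigma_{B_1}}\notin D\}$ up to a null event, which is exactly $I_V\leq I_D$ almost surely.

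For the second claim, that $\mathbb{P}_x(I_V=1)$ does not depend on $x$, I would invoke Lemma~\ref{scaled}, which gives $X^{(x)}_{\sigma_{B_1}}\stackrel{d}{=}|x|\,X^{(\mathbf{i})}_{\sigma_{B(\mathbf{i},1)}}$. Since $V(x)=V(\mathbf{i})=\{z_1>0\}$ is a cone invariant under multiplication by positive scalars, the events $\{|x|\,Y\notin V(\mathbf{i})\}$ and $\{Y\notin V(\mathbf{i})\}$ coincide for $|x|>0$. Therefore $\mathbb{P}_x(I_V=1)=\mathbb{P}_{\mathbf{i}}(X_{\sigma_{B(\mathbf{i},1)}}\notin V(\mathbf{i}))$, and I name this common value $p(\alpha,d)$.

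To derive the explicit formula, I would translate by $-\mathbf{i}$: the process $X^{(\mathbf{i})}-\mathbf{i}$ under $\mathbb{P}_{\mathbf{i}}$ is equal in law to $X$ under $\mathbb{P}_0$, so its first exit from $B(0,1)$ is distributed according to Theorem~\ref{BGR}. The event $X^{(\mathbf{i})}_{\sigma_{B(\mathbf{i},1)}}\notin V(\mathbf{i})$ is the event that its first coordinate is non-positive, which after the translation becomes $(X_{\sigma_{B(0,1)}})_1\leq -1$; substituting the BGR density and integrating over $\{y_1<-1\}$ (the boundary $\{y_1=-1\}$ is a Lebesgue null set) produces the stated formula. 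The strict bounds $p(\alpha,d)\in(0,1)$ then follow because the BGR density is strictly positive on $\{|y|>1\}$, and both $\{y_1<-1\}$ and its complement in $\{|y|>1\}$ carry positive Lebesgue measure, so neither event is negligible.

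There is no genuinely hard step, but the most delicate point is the convexity/tangency argument in part one: convexity only yields $D\subset\overline{V(x)}$, so the almost-sure domination $I_V\leq I_D$ genuinely hinges on the absolute continuity of the BGR exit distribution to discard the shared boundary $\partial V(x)$.
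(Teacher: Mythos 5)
Your proposal is correct and follows essentially the same route as the paper: the inclusion of $D$ in the half-space, the scaling identity of Lemma \ref{scaled}, and the Blumenthal--Getoor--Ray exit law of Theorem \ref{BGR} for the explicit integral. The only inessential difference is your handling of the shared boundary: since $D$ is open and $\partial V(x)$ is a supporting hyperplane, one in fact has $D\subset V(x)$ (the open half-space), so the absolute-continuity step you use to discard $\partial V(x)$ is not needed --- the paper deduces $\mathbb{P}_x(I_D\geq I_V)=1$ directly from this strict inclusion.
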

																																																																														            
	\begin{proof}The inequality follows from the inclusion $D\subset V(x)$. The formula for $p(\alpha,d)$ uses the coordinate system  and scaling property of stable processes in Lemma \ref{scaled} as well as the identity for the first exit from a sphere given by Theorem \ref{BGR}.\hfill\
	\end{proof}

	We are now ready to prove our main result. 
																																																																														            
	\begin{proof}[Proof of Theorem \ref{main}]
		Suppose we condition on the previous positions of the walk-on-spheres, $\rho_0,\dots, \rho_{k-1}$ as well as on the event $\{N>k-1\}$. Thanks to stationary and independent increments as well as  isotropy in the law of a stable process, we can always choose a coordinate system, or equivalently reorient $D$ in  such  a way that $\rho_k = |\rho_k|{\rm\bf i}$.
		This has the implication that, with the aforesaid conditioning,  the random variable $\mathbf{1}_{\{N = k\}}$ is independent of $\rho_0,\dots, \rho_{k-1}$ and equal in law to $I_D(\rho_{k-1})$, where we have abused our original notation to indicate the initial position of $X$ in the definition of $I_D$. Similarly, with the same abuse of notation, the event $I_V(\rho_{k-1})$ is independent of $\rho_0,\dots, \rho_{k-1}$ and equal in law to a Bernoulli random variable with probability of success $p = p(\alpha, d)$. In particular, the sequence $I_V(\rho_k)$, $k\geq 0$ is a sequence of Bernoulli trials. That is to say, if we define
		\[
			\Gamma  = \min\{k\geq 1\colon I_V(\rho_k) = 1 \},
		\]
		then it is geometrically distributed with parameter $p$.
		Thanks to Corollary \ref{indicators}, we also have that  $\mathbb{P}_x(I_D\geq I_V)|_{x = \rho_k}=1$, $k< N$,  that is to say,  $\{I_V(\rho_k)=1\}$ almost surely implies $\{I_D(\rho_k)=1\}$, for $k<N$, and hence
		\[
			\min\{n\geq 1\colon I_D(\rho_k) = 1 \}\leq  \min\{n\geq 1\colon I_V(\rho_k) = 1 \}
		\]
		almost surely.
		In other words, we have $N\leq \Gamma$, almost surely, as required. %\red{is the almost surely needed here and also in the statement of the proof?}
	\end{proof}

	\section{Non-convex domains}
	\label{non_convex}

	The key element in the proof of Theorem \ref{main} is the comparison of the event that the next step of the walk-on-spheres exits the domain $D$ with the event that the next step of the walk-on-spheres exits a larger, more regular domain. More precisely, the aforesaid regular domain  is taken to be the half-space that contains $D$ with boundary hyperplane that is tangent to both the current maximal sphere and $D$. It is the use of a half-space that allows us to work with unbounded domains but which forces the assumption that $D$ is convex. With a little more care, we can remove the need for convexity without disturbing the main idea of the proof. However, this will come at the cost of insisting that $D$ is bounded. It does however, open the possibility that $D$ is not a connected domain.  We give two results in this respect.
																																																																														            
	For the first one, we introduce the following definition, which  has previously been used in the potential analysis of stable processes; see for example \autocite{Chen-Song}.
																																																																														            
	\begin{defin}\rm A domain $D$ in $\mathbb{R}^d$ is said to satisfy the {\it uniform exterior-cone condition}, henceforth written UECC, if there exist constants $\eta > 0$, $r > 0$ and a cone 
		\[
			{\rm Cone}(\eta) = \{x = (x_1,\dots,x_d) \in
			\mathbb{R}^d\colon |x|<\eta x_1\}
		\] such 
		that, for every $z\in \partial D$, there is a
		cone $C_z$ with vertex $z$, isometric to ${\rm Cone}(\eta)$ satisfying $C_z \cap B(z,r) \subset D^{\mathrm{c}}$.
	\end{defin}
	\noindent It is well known that, for example,  bounded $C^{1,1}$ domains satisfy (UECC). We need a slightly more restrictive class of domains than those respecting UECC.
																																																																														            
	\begin{defin}\rm 
		We say that $D$ satisfies the {\it regularised uniform exterior-cone condition}, written RUECC, if it is UECC and the following additional condition holds: for each $x\in D$, suppose that $\partial(x)$ is a closest point on the boundary of $D$ to $x$. Then the isometric cone that qualifies $D$ as UECC can be placed with its vertex at $\partial(x)$ and symmetrically oriented around the line that passes through $x$ and $\partial(x)$.
	\end{defin}
																																																																														            
	\begin{figure}[h]
		\centering
		\includegraphics[width=0.8\linewidth]{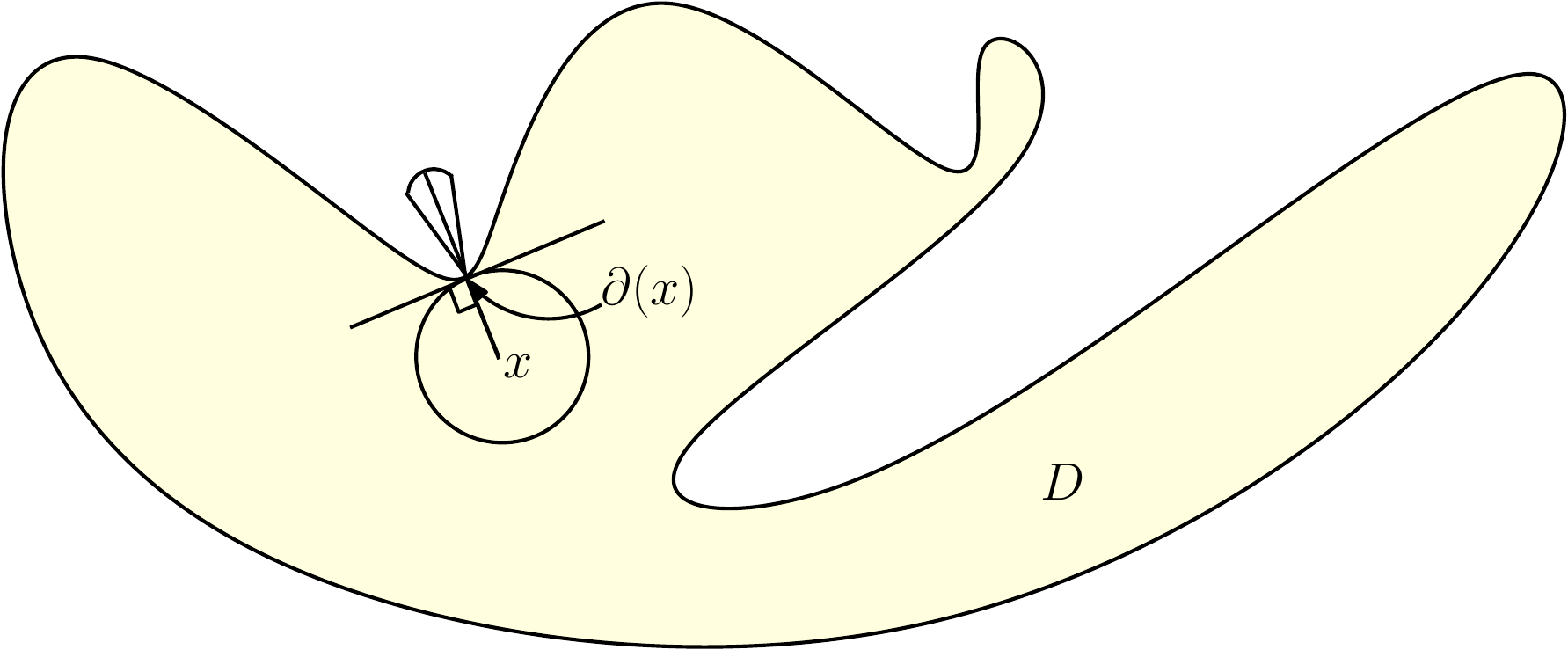}
		\caption{A domain that satisfies the regularised uniform exterior-cone condition}
		\label{fig:class_proof}
	\end{figure}

	\begin{theo}
		Suppose that $D$ is open and bounded (but not necessarily connected) and satisfies RUECC. Then, for each $x\in D$, there exists a random variable $\hat{\Gamma}$ such that $N\leq \hat{\Gamma}$ almost surely and 
		\[
			\mathbb{P}(\hat{\Gamma} = k)  = (1-\hat{q})^{k-1}\hat{q}, \qquad k\in \mathbb{N}, 
		\]
		for some $\hat{q}=\hat{q}(\alpha, D)$.
	\end{theo}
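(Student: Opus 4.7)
The plan is to adapt the proof of Theorem \ref{main}, replacing the tangent half-space $V(\rho_{k-1})$ (which requires convexity of $D$) with the exterior cone $C_{\partial(\rho_{k-1})}$ supplied by RUECC at the closest boundary point $\partial(\rho_{k-1})$. For a generic step $k$ with $\rho_{k-1}\in D$, recall that $r_k=|\rho_{k-1}-\partial(\rho_{k-1})|$ is the radius of $B_k$ and that RUECC yields a cone $C_{\partial(\rho_{k-1})}$, isometric to $\mathrm{Cone}(\eta)$, with vertex $\partial(\rho_{k-1})\in \partial B_k$, oriented symmetrically around the line through $\rho_{k-1}$ and $\partial(\rho_{k-1})$, and satisfying $C_{\partial(\rho_{k-1})}\cap B(\partial(\rho_{k-1}),r)\subset D^{\mathrm c}$. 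This intersection plays the role in the non-convex setting that the complement of $V(\rho_{k-1})$ played in the convex proof.

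First, I would introduce the indicator $I_C^{(k)}\coloneqq \mathbf 1_{\{X^{(\rho_{k-1})}_{\sigma_{B_k}}\in C_{\partial(\rho_{k-1})}\cap B(\partial(\rho_{k-1}),r)\}}$, so that the inclusion above immediately gives $\mathbf 1_{\{\rho_k\not\in D\}}\geq I_C^{(k)}$ almost surely. Next, I would reorient coordinates so that $\partial(\rho_{k-1})=0$ and $\rho_{k-1}=r_k\mathbf i$, then appeal to spatial homogeneity, isotropy and the scaling identity of Lemma \ref{scaled} to obtain
\[
\mathbb P\bigl(I_C^{(k)}=1\bigm| \rho_0,\ldots,\rho_{k-1}\bigr)=\mathbb P_{\mathbf i}\bigl(X_{\sigma_{B(\mathbf i,1)}}\in C_0\cap B(0,r/r_k)\bigr),
\]
where $C_0$ denotes the image of $\mathrm{Cone}(\eta)$ with vertex $0$ and axis along $-\mathbf i$, which is scale-invariant. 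Since $D$ is bounded and $r_k\leq D_\star\coloneqq \mathrm{diam}(D)$, the ball on the right contains $B(0,r/D_\star)$, yielding the uniform lower bound
\[
\mathbb P\bigl(I_C^{(k)}=1\bigm| \rho_0,\ldots,\rho_{k-1}\bigr)\geq \hat q\coloneqq \mathbb P_{\mathbf i}\bigl(X_{\sigma_{B(\mathbf i,1)}}\in C_0\cap B(0,r/D_\star)\bigr).
\]
Positivity of $\hat q=\hat q(\alpha,D)\in(0,1)$ then follows from the explicit density in Theorem \ref{BGR}: the set $C_0\cap B(0,r/D_\star)\setminus\{0\}$ is open, of positive Lebesgue measure, and contained in $B(\mathbf i,1)^{\mathrm c}$ on which the Blumenthal--Getoor--Ray density is strictly positive.

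With this uniform lower bound in hand, the final step is a standard stochastic domination. On a possibly enlarged probability space, I would introduce i.i.d.\ $U_k\sim \mathrm{Unif}[0,1]$ and realise $I_C^{(k)}=\mathbf 1_{\{U_k\leq p_k\}}$ for the appropriate conditional probability $p_k\geq \hat q$, then set $\xi_k\coloneqq \mathbf 1_{\{U_k\leq \hat q\}}$; the sequence $(\xi_k)_{k\geq 1}$ is i.i.d.\ Bernoulli$(\hat q)$ with $\xi_k\leq I_C^{(k)}\leq \mathbf 1_{\{\rho_k\not\in D\}}$ almost surely. Taking $\hat\Gamma\coloneqq \min\{k\geq 1:\xi_k=1\}$ gives the required geometric$(\hat q)$ variable with $N\leq \hat\Gamma$ almost surely.

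The main obstacle, and the reason the conclusion requires $D$ bounded, is the uniformity of the lower bound on $\hat q$: if $D$ were unbounded, the radii $r_k$ could be arbitrarily large, the rescaled target $C_0\cap B(0,r/r_k)$ would collapse onto the single point $\{0\}$, and no uniform lower bound on the one-step exit probability would survive. Everything else is routine given the convex proof; the genuine content of the argument is the geometric observation that RUECC furnishes, after scaling, a fixed open sub-domain of $B(\mathbf i,1)^{\mathrm c}$ with uniformly positive exit probability for the isotropic stable process.
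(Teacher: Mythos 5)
Your proposal is correct and follows essentially the same route as the paper: replace the tangent half-space by the RUECC exterior cone at the nearest boundary point, rescale via Lemma \ref{scaled} so the target becomes a fixed cone-ball intersection in $B(\mathbf i,1)^{\mathrm c}$, use boundedness of $D$ to make the lower bound $\hat q$ uniform in the current position, and conclude by the same domination argument as in Theorem \ref{main}. The only cosmetic difference is that the paper avoids your auxiliary-uniform coupling by bounding the exit indicator below by the indicator of the smaller, scale-fixed event $\{|x|^{-1}X^{(x)}_{\sigma_{B_1}}\in C_{0,r/\eta}\}$, whose probability is exactly $\hat q$ at every step.
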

	\begin{proof}Reviewing the proof of Theorem \ref{main}, we note that it suffices to prove that, in the context of Corollary \ref{indicators}, for each $x\in D$, there exists a Bernoulli random variable $\hat{J}_x$ with parameter $\hat{q}$ (independent of $x$) such that $\mathbb{P}_x(I_D\geq \hat{J}_x)=1$. To this end,  we recall that, without loss of generality, we may choose our coordinate system such that   $x = |x|{\rm\bf i}\in D$ is such that $\partial(x) = 0$. The assumption that $D$ is bounded implies that there exists a $\eta$ such that $|x|\leq \eta$. From the definition of RUECC, we know that there exists an $r>0$ and a cone,  $C_{0}$, with vertex at $0$, a closest point on $\partial D$ to $x$, which is symmetrically oriented around the line passing through $x$ and $0$, such that $C_{0, r}\coloneqq C_{0} \cap B(0,r)\subset D^{\texttt{c}}$.
		We have 
		\begin{align*}
			\mathbb{P}_{x}(X_{\sigma_{B_1}} \in C_{0, r})
			  & =\mathbb{P}_{\mathbf i}(X_{\sigma_{B(\mathbf{i},1)}}\in C_{0, r/|x|})                                                                           \\
			  & \geq \mathbb{P}_{\mathbf i}(X_{\sigma_{B(\mathbf{i},1)}}\in C_{0, r/\eta})                                                                    \\
			  & =  \frac{\Gamma(d/2)}{\pi^{(d+2)/2}}\,\sin(\pi\alpha/2)\int_{C_{-{\rm\bf i}, (r/\eta)}}\left|1- |y|^2\right|^{-\alpha/2}|y|^{-d}\,{\rm d}y \\
			  & \eqqcolon\hat{q},                                                                                                                               
		\end{align*}
		where $C_{z, u} \coloneqq  [C_{0} \cap B(0,u)]-\{z\}$, for $z\in\mathbb{R}^d$ and $u>0$. Note that $\hat{q}$ is necessarily strictly positive. Taking account of scaling, we have $\mathbb{P}_x$-almost surely that
		\[
			I_D\geq \mathbf{1}_{\{|x|^{-1}X^{(x)}_{\sigma_{B_1}} \in C_{0, r/|x|}  \}}\geq\mathbf{1}_{\{|x|^{-1}X^{(x)}_{\sigma_{B_1}} \in C_{0, r/\eta}  \}}\eqqcolon \hat{J},
		\]
		where $\hat{J}$ is a Bernoulli random variable with parameter $\hat{q}$. Stochastic dominance, $N\leq \hat{\Gamma}$ almost surely, follows by the same line of reasoning as in the proof of Theorem \ref{main}.
	\end{proof}
																																																																														            
	For the second result, we completely relax the geometrical requirements on $D$ at the expense of efficiency.
	With an abuse of our earlier notation, we introduce
	\[
		N(\varepsilon) = \min\Bigl\{n\geq 0\colon  \rho_n\not\in D \text{ or }\inf_{z\in \partial D}|\rho_n-z|<\varepsilon \Bigr\}.
	\]
	%We give two results in this direction. 
	Intuitively,  $N(\varepsilon)$ is the step that exits the inner $\varepsilon$-thickened boundary of $D$.
																																																																														            
	\begin{theo} Suppose that $D$ is open and bounded (but not necessarily connected).
		Then for all $x\in D$, there exists a constant $q_\varepsilon = q_\varepsilon(\alpha,D)>0$ (independent of $x$) and a random variable $\Gamma^\varepsilon$ such that $N\leq \Gamma^\varepsilon$ almost surely, where 
		\[
			\mathbb{P}_x(\Gamma^\varepsilon  = k ) = (1-q_\varepsilon)^{k-1}q_\varepsilon, \qquad k\in\mathbb{N}.
		\]
		Moreover,  $q_\varepsilon = \mathcal{O}(\varepsilon^\alpha)$ as $\varepsilon\downarrow 0$. In particular 
		\begin{equation}
			\mathbb{E}_x[N(\varepsilon)]  = \mathcal{O}(\varepsilon^{-\alpha}),\qquad \text{as $\varepsilon \downarrow 0$.}
			\label{ea}
		\end{equation}
	\end{theo}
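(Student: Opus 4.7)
The plan is to mimic the Bernoulli-domination argument from the proof of Theorem \ref{main} (and its RUECC refinement): at each step of the walk-on-spheres, conditional on the past and on not having stopped, I will exhibit a Bernoulli trial with success probability at least $q_\varepsilon$ (independent of the current position) whose success forces the walk to stop at that step. The \iid sequence of these trials will then dominate the exit indicators, so $N(\varepsilon)$ is stochastically dominated by a geometric $\Gamma^\varepsilon$ with parameter $q_\varepsilon$, exactly as in the proof of Theorem \ref{main}. Because $D$ is now merely open and bounded---with no convexity or cone assumption---the Bernoulli event must be built from boundedness of $D$ alone rather than from a half-space or cone comparison.

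Concretely, fix $R>0$ with $D\subset B(0,R)$. On the event $\{k-1 < N(\varepsilon)\}$ we have $\rho_{k-1}\in D$ and $r_k=\mathrm{dist}(\rho_{k-1},\partial D)\geq\varepsilon$, and moreover $D\subset B(\rho_{k-1},2R)$, so any jump from $\rho_{k-1}$ of length greater than $2R$ lands outside $D$. By the scaling and spatial-homogeneity identity underlying Lemma \ref{scaled}, the normalised displacement $r_k^{-1}(X_{\sigma_{B_k}}-\rho_{k-1})$ is distributed as $Z:=X^{(0)}_{\sigma_{B(0,1)}}$, hence
\[
\mathbb{P}_{\rho_{k-1}}(X_{\sigma_{B_k}}\notin D) \ \geq\ \mathbb{P}(|Z|>2R/r_k) \ \geq\ \mathbb{P}(|Z|>2R/\varepsilon) \ =:\ q_\varepsilon,
\]
where the last inequality uses monotonicity and $r_k\geq \varepsilon$. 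The lower bound depends only on $\alpha$, $d$ and $R$, so the Bernoulli chain argument of Theorem \ref{main} produces $N(\varepsilon)\leq \Gamma^\varepsilon$ almost surely with $\Gamma^\varepsilon$ geometrically distributed of parameter $q_\varepsilon$.

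The rate $q_\varepsilon=\mathcal{O}(\varepsilon^\alpha)$ is a direct tail estimate on $|Z|$. Integrating the density from Theorem \ref{BGR} over the angular variable and using that $(r^2-1)^{-\alpha/2}\geq r^{-\alpha}$ for $r>1$,
\[
\mathbb{P}(|Z|>M) \ =\ \frac{2\sin(\pi\alpha/2)}{\pi}\int_M^\infty (r^2-1)^{-\alpha/2}\,r^{-1}\,\mathrm{d}r \ \geq\ \frac{2\sin(\pi\alpha/2)}{\pi\alpha}\,M^{-\alpha}.
\]
Setting $M=2R/\varepsilon$ gives $q_\varepsilon\geq C(\alpha,d,R)\,\varepsilon^{\alpha}$, and hence $\mathbb{E}_x[N(\varepsilon)]\leq 1/q_\varepsilon=\mathcal{O}(\varepsilon^{-\alpha})$ as $\varepsilon\downarrow 0$.

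I do not anticipate any real obstacle. The Bernoulli event is in fact easier to construct than in the RUECC case, since only boundedness of $D$ is required; the only analytic input is the explicit tail estimate above. The one subtle point is to ensure the lower bound on the exit probability is genuinely uniform in the current position of the walk, which is exactly what the scaling identity and the uniform lower bound $r_k\geq\varepsilon$ deliver.
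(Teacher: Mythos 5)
Your proposal is correct and follows essentially the same route as the paper: boundedness of $D$ supplies a fixed radius ($2R$ in your notation, the paper's $\delta$) such that a normalised exit jump of size at least $\mathrm{const}/\varepsilon$ forces exit from $D$, scaling reduces this to the unit-ball exit law of Theorem \ref{BGR}, and the Bernoulli-chain argument of Theorem \ref{main} yields the geometric domination with parameter $q_\varepsilon$ uniform in the current position. The only (welcome) difference is that your tail estimate is the lower bound $q_\varepsilon \geq C(\alpha,d,R)\,\varepsilon^{\alpha}$, which is precisely the direction needed to conclude $\mathbb{E}_x[N(\varepsilon)]\leq 1/q_\varepsilon = \mathcal{O}(\varepsilon^{-\alpha})$, whereas the paper records the matching upper bound $q_\varepsilon=\mathcal{O}(\varepsilon^{\alpha})$; both follow from $(r^2-1)^{-\alpha/2}\asymp r^{-\alpha}$ for large $r$, so the orders agree.
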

	\begin{proof}
		%Reviewing the proof of Theorem \ref{main}, we note that it suffices to prove that, in the context of Corollary \ref{indicators}, for each $x\in D$, there exists a Bernoulli random variable $J_x(\varepsilon)$ with parameter $q$ (independent of $x$) such that $I_D(x)\geq J_x(\varepsilon)$. 
		%To this end, let us define 
		Define
		\[
			\delta\coloneqq \inf\Bp{r>0\colon  D\subset B(x,r) \text{ for all } x\in D }, 
		\]  
		so that any sphere of radius $\delta$ centred at $x\in D$  contains $D$.
		Once again, we recall that, without loss of generality, we may choose our coordinate system such that   $x = |x|\,{\rm\bf i}\in D$ is such that $\partial V(x)$ is a tangent hyperplane to  $B_1$  and such that $0\in \partial B_1 \cap \partial V(x)\cap\partial D$. 
		%The assumption (UECC) implies that there exists a pair $\eta,r>0$  and a cone $C_0\subseteq V(x)^{\texttt{c}}$ which is isometric to $ {\rm Cone}(\eta)$ such that $H_0(\eta,r)\coloneqqC_0 \cap B(0,r) \subset D^{\texttt{c}}$. Suppose we write $\mathcal{H}(\eta, r)$ for the family of such possible $H_0(\eta, r)$. Now let 
		%\begin{align*}
		%q &=  \inf_{\stackrel{H \in \mathcal{H}(\eta, r)}{M>z>0} } \mathbb{P}_{\mathbb i}(|z|X_{\sigma_{B({\rm\bf i},1)}}\in H)\\
		%&=  \inf_{\stackrel{H \in \mathcal{H}(\eta, r)}{M>z>0} }\frac{\Gamma(d/2)}{\pi^{(d+2)/2}}\sin(\pi\alpha/2)\int_{|z|^{-1}H}\left|1- |y|^2\right|^{-\alpha/2}|y|^{-d}\,{\rm d}y,
		%\end{align*}
		%where $M$ is the diameter of the smallest sphere that circumscribes $D$. Note that $q$ is 
		% necessarily a strictly positive quantity.
		Then, taking account of scaling, and that, for all $x\in D$ such that $\inf_{z\in\partial D}|x-z|\geq \varepsilon$, with the particular choice of coordinates described above, $\delta/|x|\leq \delta/\varepsilon$, we have 
		\[
			\mathbf{1}_{\{N(\varepsilon) = 1\}}\geq \mathbf{1}_{\{X^{(x)}_{\sigma_{B_1}} \not\in B(x,\delta)  \}} =  \mathbf{1}_{\{|x|^{-1}X^{(x)}_{\sigma_{B_1}} \not\in B({\rm\bf i},\delta/|x|)  \}} \geq  \mathbf{1}_{\{|x|^{-1}X^{(x)}_{\sigma_{B_1}} \not\in B({\rm\bf i},\delta/\varepsilon)  \}}.  %\geq J_x,
		\]
		Recall, however, from (\ref{scaleB1}) that $|x|^{-1}X^{(x)}_{\sigma_{B_1}} {\buildrel d \over =} {X}^{(\mathbf{i})}_{\sigma_{B({\rm\bf i},1)}}$. It therefore follows that, $\mathbb{P}_x$-almost surely,
		\[
			\mathbf{1}_{\{N(\varepsilon) = 1\}}%\geq \mathbf{1}_{\{X^{(x)}_{\sigma_{B_1}} \not\in B(x,\delta)  \}} 
			\geq \mathbf{1}_{\{X^{(\mathbf{i})}_{\sigma_{B({\rm\bf i}, 1)}} \not\in B(\mathbf{i},\delta/\varepsilon)  \}} \eqqcolon J^\varepsilon,
		\]
		where $J^\varepsilon$ is a Bernoulli random variable with parameter 
		\[
			q_\varepsilon (\alpha, D)%
			=  \mathbb{P}_{\mathbf i}(X_{\sigma_{B({\rm\bf i},1)}}\notin B({\rm\bf i},\delta/\varepsilon))%
			=\frac{\Gamma(d/2)}{\pi^{(d+2)/2}}\,%
			\sin(\pi\alpha/2)\,%
			\int_{|y|\geq \delta/\varepsilon}\left|1- |y|^2\right|^{-\alpha/2}|y|^{-d}\,{\rm d}y.
		\] 
		Reverting to generalised spherical polar coordinates, in particular recalling that the Jacobian with respect to Cartesian coordinates is no larger than $|x|^{d-1}$ (see \cite{Blum}), we can estimate 
		\[
			q_\varepsilon (\alpha, D)%
			\leq \frac{\Gamma(d/2)}{\pi^{(d+2)/2}} \, \sin(\pi\alpha/2)\,\int_{\delta/\varepsilon}^\infty r^{-(\alpha +1)}dr%
			=\mathcal{O} (\varepsilon^\alpha).
		\]
		Reviewing the line of reasoning in the proof of Theorem \ref{main}, we see that this comparison of events on the first step can be repeated at each surviving step of the algorithm to deduce the claimed result.
	\end{proof}
	The $\mathcal{O}(\varepsilon^{-\alpha})$ bound in \eqref{ea} can be compared  with the bounds achieved by \autocite{BB} for the classical walk-on-spheres with Brownian motion for domains with more general geometries than convex.  The worst case in \autocite{BB} is  $\mathcal{O}(\varepsilon^{2-4/a})$ for a parameter $a>0$ (describing the domain's thickness or fractal boundary). Notably in the limit $\alpha\to 2$ ($X$ converges to Brownian motion) and $a \to \infty$ (the domain loses regularity), the two agree with an $\mathcal{O}(\varepsilon^{-2})$ bound.
	\section{Fractional Poisson problem}\label{inhomogenous}
																																																																														            
	We are now interested in using the walk-on-spheres process to find the solution to the inhomogeneous version of (\ref{aDirichlet}), namely
	\begin{gather}
		\begin{aligned}
			-(-\Delta )^{\alpha/2}u(x) & =-\rhs(x), & \qquad x & \in D,          \\
			u(x)                       & = \bc(x), & x        & \in  D^{\rm c}, 
		\end{aligned} 
		\label{aDirichlet_g}
	\end{gather}
	for suitably regular functions $\rhs\colon D\to \mathbb{R}$ and $\bc\colon D^{\rm c} \to \mathbb R$. We want to identify a Feynman--Kac representation for solutions to \eqref{aDirichlet_g} for suitable assumptions on $\bc, \rhs$ and $D$. Throughout this section, we adopt the setting of the following theorem.
																																																																														            
	% \red{cannot see where $C^{\alpha+\epsilon}$ comes in; can the non-neg assumption to be taken out?}
	\begin{theo}\label{hasacorr} Let $d\geq 2$ and assume that $D$ is a  bounded domain in $\mathbb{R}^d$.
		Suppose that  $\bc$   is a  continuous function  which belongs to $L^1_\alpha(D^\mathrm{c})$. Moreover, suppose that $\rhs$ is a function in $ C^{\alpha +\varepsilon}(\overline{D})$ for some $\varepsilon>0$. 
		Then there exists a  unique  continuous solution to   \eqref{aDirichlet_g} in $L^1_\alpha(\mathbb{R}^d)$ which  is given by
		\begin{equation}
			u(x) = \mathbb{E}_x[\bc(X_{\sigma_D})] + \mathbb{E}_x\left[\int_0^{\sigma_D} \rhs(X_s)\,{\rm d}s\right], \qquad x\in D,
			\label{non_homg_FK}
		\end{equation}
		where $\sigma_D=\inf\{t>0\colon X_t\not\in D\}$.
		% {\color{red} Actually, I am not sure that this is rigorously known. BUT I begin to wonder if the WoS method actually proves that this is the unique solution by construction!}
	\end{theo}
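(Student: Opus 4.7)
The plan is to exploit linearity by splitting the Feynman--Kac representation \eqref{non_homg_FK} into its two natural parts. Write $u_1(x) = \mathbb{E}_x[\bc(X_{\sigma_D})]$ and $u_2(x) = \mathbb{E}_x[\int_0^{\sigma_D} \rhs(X_s)\,{\rm d}s]$, and aim to show that $u_1$ solves the homogeneous problem with exterior data $\bc$, that $u_2$ solves the Poisson problem with zero exterior data and source $\rhs$, and then assemble $u = u_1 + u_2$. Both representations are well-defined under the hypotheses: boundedness of $D$ (giving finite $\mathbb{E}_x[\sigma_D]$) together with boundedness of $\rhs$ on $\overline{D}$ handle $u_2$, while for $u_1$ I would invoke the Ikeda--Watanabe-type formula
\[
u_1(x) = \int_{D^{\rm c}} P_D(x,y)\,\bc(y)\,{\rm d}y,
\]
where $P_D$ is the Poisson kernel of $X$ on $D$, combining the $|y|^{-d-\alpha}$ decay of $P_D(x,\cdot)$ at infinity with the $L^1_\alpha(D^{\rm c})$ hypothesis on $\bc$. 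The exterior condition $u(x)=\bc(x)$ on $D^{\rm c}$ is immediate since $\sigma_D = 0$ $\mathbb{P}_x$-almost surely for $x \in D^{\rm c}$.

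To verify the PDE inside $D$, I would use Dynkin's formula localised to a small ball $B = B(x,r) \subset D$. By the strong Markov property,
\[
u_2(x) = \mathbb{E}_x[u_2(X_{\sigma_B})] + \mathbb{E}_x\left[\int_0^{\sigma_B} \rhs(X_s)\,{\rm d}s\right],
\]
and analogously $u_1(x) = \mathbb{E}_x[u_1(X_{\sigma_B})]$. The $C^{\alpha+\varepsilon}(\overline{D})$ regularity of $\rhs$ is precisely the H\"older regularity required to apply the standard potential-theoretic theory (in the style of Silvestre and Ros-Oton--Serra) to conclude that $u_1$ and $u_2$ are regular enough in $D$ for $(-\Delta)^{\alpha/2}$ to act pointwise. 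Rearranging the mean-value identities, dividing by $\mathbb{E}_x[\sigma_B]$ and letting $r \downarrow 0$ then yields $-(-\Delta)^{\alpha/2} u_1 = 0$ and $-(-\Delta)^{\alpha/2} u_2 = -\rhs$ on $D$. Continuity of $u$ inside $D$ follows from the stochastic continuity of $x \mapsto \mathbb{E}_x[\cdots]$ together with continuity of $\bc$ and $\rhs$; membership of $L^1_\alpha(\mathbb{R}^d)$ follows from the $L^1_\alpha$-bound on $\bc$ outside $D$ combined with boundedness of $u$ on the bounded set $D$.

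For uniqueness, if $v$ is any other $L^1_\alpha$-solution to \eqref{aDirichlet_g}, then $w = u - v$ solves \eqref{aDirichlet_g} with $\bc \equiv 0$ and $\rhs \equiv 0$. Applying Dynkin's formula to $w$ at the truncated stopping time $\sigma_D \wedge t$ and passing $t \to \infty$ should give $w(x) = \mathbb{E}_x[w(X_{\sigma_D})] = 0$, so that $u = v$. The main obstacle I anticipate is the rigorous justification of this optional-stopping passage to the limit: because $X$ exits $D$ by a jump and may land arbitrarily far outside $D$, bounded-martingale arguments do not apply directly. This is precisely where the $L^1_\alpha$ hypothesis earns its keep, supplying the dominating integrability needed to invoke dominated convergence for the exit term $w(X_{\sigma_D})$. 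A secondary and softer technical point is continuity of $u$ across $\partial D$, which in the stable setting is comparatively benign because $X$ always jumps cleanly into the open complement $D^{\rm c}$, eliminating the Lebesgue-thorn type pathologies of the Brownian case.
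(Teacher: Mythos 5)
Your architecture (split $u=u_1+u_2$, mean-value identities from the strong Markov property, Dynkin's characteristic operator, uniqueness by optional stopping) is the classical analytic route, and it is genuinely different from the paper's, which never touches regularity theory for $u$ at all: the paper takes the ball case of the theorem as known (Theorems 2.10 and 3.2 of \autocite{bucur}), shows the candidate $\upsilon$ is bounded, continuous and in $L^1_\alpha(\mathbb{R}^d)$, and then uses the strong Markov property at the exit of the maximal ball $B(x')$ around an arbitrary $x'\in D$ to conclude that $\upsilon$ coincides with the unique ball solution there, hence solves \eqref{aDirichlet_g}; uniqueness is obtained by showing any solution $\hat u$ satisfies the ball-level Feynman--Kac identity, which makes $M_k=\hat u(X_{\sigma_{B_k}\wedge\sigma_D})+\int_0^{\sigma_{B_k}\wedge\sigma_D}\rhs(X_s)\,{\rm d}s$ along the walk-on-spheres a uniformly integrable martingale, dominated via the large ball $B^*$, Theorem \ref{BGR} and the $L^1_\alpha$ hypothesis.

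Measured against this, your proposal has a genuine gap at the step where the equations are actually verified. To invoke interior regularity ``in the style of Silvestre and Ros-Oton--Serra'' for $u_1$ and $u_2$ you must already know that they solve the corresponding equations in some (weak, distributional or viscosity) sense -- which is precisely what you are trying to prove; as written the regularity step is circular, and the passage ``divide by $\mathbb{E}_x[\sigma_B]$ and let $r\downarrow 0$'' only identifies Dynkin's characteristic operator with the pointwise $-(-\Delta)^{\alpha/2}$ under that very regularity. The non-circular repair is exactly the paper's move: use the explicit Poisson kernel and Green function for balls (equivalently, the known ball case of the theorem), for which the $C^{\alpha+\varepsilon}(\overline{D})$ assumption on $\rhs$ is needed. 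The same issue resurfaces, more seriously, in your uniqueness argument: applying Dynkin's formula to $w=u-v$ at $\sigma_D\wedge t$ presupposes that the arbitrary competitor $v$ -- only assumed continuous, in $L^1_\alpha(\mathbb{R}^d)$ and a pointwise solution -- is regular enough to lie in the (extended) domain of the generator, and you offer no argument for that. You identify the $t\to\infty$ limit as the main obstacle, but that part is actually the easy one (the $L^1_\alpha$ domination you describe is essentially the paper's uniform-integrability estimate); the missing idea is how to get the ball-level mean-value identity for an arbitrary solution without an It\^o/Dynkin formula, which the paper obtains by appealing to uniqueness for balls from \autocite{bucur} and then running the martingale argument along the walk-on-spheres.
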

	\noindent The combinations of Theorem 2.10 and 3.2 in   \autocite{bucur} treat the case that $D$ is a ball. In the more general setting, amongst others, \autocite{B99},  \autocite{R-O1} and \autocite{R-O2} (see also citations therein) offer results in this direction, albeit from a more analytical perspective. We give a new probabilistic proof of Theorem \ref{hasacorr} in the Appendix using a method that combines the idea of walks-on-spheres with the version of Theorem \ref{hasacorr} when $D$ is a ball. It is for this reason that the (otherwise unclear) need for the assumption that $\rhs\in C^{\alpha +\varepsilon}(\overline{D})$ enters. Note in particular that Theorem \ref{corr} follows as a corollary.

	We can develop the expression in \eqref{non_homg_FK} in terms of the walk-on-spheres $(\rho_n, n\leq N)$, providing the basis for a Monte Carlo simulation. What will work to our advantage here is another explicit identity that appears in \autocite{BGR}.  Define
	\begin{align*}
		V_r(x,{\rm d}y) & \coloneqq \int_0^\infty \mathbb{P}_x(X_t\in {\rm d}y, \, t<{\sigma_{B(x,r)}}  )\,{\rm d}t, \qquad x\in\mathbb{R}^d, \;|y|<1,\; r>0. 
	\end{align*}
																																																																														            
	\begin{theo}[Blumenthal, Getoor, Ray 1961]  The expected occupation measure of the stable process prior to exiting a unit ball centred at the origin is given, for $|y|<1$, by
		\begin{align}
			V_1(0,{\rm d}y)= 2^{-\alpha}\,\pi^{-d/2}\, \frac{\Gamma(d/2)}{\Gamma(\alpha/2)^{2}}\,            
			|y|^{\alpha -d}\, \pp{\int_0^{|y|^{-2}-1}(u+1)^{-d/2}u^{\alpha/2-1}{\rm d}u}\,{\rm d}y.\label{V} 
		\end{align}
		\end{theo}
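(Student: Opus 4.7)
The plan is to identify the occupation measure $V_1(0,\mathrm{d}y)$ with $G_{B(0,1)}(0,y)\,\mathrm{d}y$, where $G_{B(0,1)}$ is the Green function of the isotropic $\alpha$-stable process killed on exiting the unit ball, and to evaluate this density via the classical Riesz decomposition (Hunt's formula). Since $\alpha<2\leq d$, the potential kernel of the unrestricted process coincides with the Riesz kernel $K_\alpha(x,y)=C_{d,\alpha}|x-y|^{\alpha-d}$ for an explicit constant $C_{d,\alpha}$, and Hunt's formula yields
\[
G_{B(0,1)}(0,y)=C_{d,\alpha}\,|y|^{\alpha-d}-C_{d,\alpha}\,\mathbb{E}_0\bigl[|X_{\sigma_{B(0,1)}}-y|^{\alpha-d}\bigr].
\]
The free-space term already carries the expected prefactor $|y|^{\alpha-d}$; the remaining task is to evaluate the corrective expectation in closed form.

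Theorem~\ref{BGR} supplies the law of $X_{\sigma_{B(0,1)}}$ explicitly, so the expectation becomes a definite integral on $\{z:|z|>1\}$ against the density $\pi^{-(d/2+1)}\Gamma(d/2)\sin(\pi\alpha/2)(|z|^2-1)^{-\alpha/2}|z|^{-d}$. By the rotational invariance of the stable process, $G_{B(0,1)}(0,y)$ depends on $y$ only through $|y|$, so one may rotate $y$ onto a fixed axis and pass to generalised spherical polar coordinates for $z$. The $(d-2)$-sphere integrates out via a beta-function identity, reducing the problem to a two-dimensional integral in $(r,s)=(|z|,\cos\theta)\in(1,\infty)\times(-1,1)$ with integrand of the schematic form $(r^2-1)^{-\alpha/2}r^{-1}(1-s^2)^{(d-3)/2}(r^2-2r|y|s+|y|^2)^{(\alpha-d)/2}$.

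At this point a Kelvin-type substitution, engineered so that $(r,s)$ is replaced by a single variable $u$ sweeping $(0,|y|^{-2}-1)$ as $(r,s)$ traverses the exterior of the ball, decouples the integrand and produces the factorisation $|y|^{\alpha-d}\int_0^{|y|^{-2}-1}(u+1)^{-d/2}u^{\alpha/2-1}\,\mathrm{d}u$. Collecting the numerical prefactors --- $C_{d,\alpha}$, the surface area of the reducing sphere, the beta value from the angular integration, and the factor $\pi^{-(d/2+1)}\Gamma(d/2)\sin(\pi\alpha/2)$ from Theorem~\ref{BGR} --- and invoking Euler's reflection formula for $\Gamma$ where needed recovers the announced constant $2^{-\alpha}\pi^{-d/2}\Gamma(d/2)/\Gamma(\alpha/2)^2$.

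The computational heart --- and principal obstacle --- is identifying and verifying the Kelvin-type substitution together with the subsequent constant book-keeping; this was the crux of Blumenthal, Getoor and Ray's original derivation. The remaining ingredients (Hunt's formula, rotational reduction, and angular integration via beta functions) are standard in the potential theory of isotropic stable processes and require no new ideas.
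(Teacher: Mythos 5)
Note first that the paper itself offers no proof of this statement: it is quoted directly from Blumenthal--Getoor--Ray (1961), with the remark that in the analytic literature it is Boggio's formula. So your proposal can only be judged on its own terms, not against an in-paper argument. Your overall strategy is sound and is essentially the classical route: $V_1(0,\mathrm{d}y)$ is by definition the Green measure of the process killed on exiting $B(0,1)$; since $\alpha<d$ the potential density is the Riesz kernel, Hunt's formula gives
\begin{equation*}
G_{B(0,1)}(0,y)=C_{d,\alpha}|y|^{\alpha-d}-C_{d,\alpha}\,\mathbb{E}_0\left[\,|X_{\sigma_{B(0,1)}}-y|^{\alpha-d}\right],
\end{equation*}
and the exit law of Theorem \ref{BGR} turns the corrective term into an explicit integral; the target formula is indeed the $x=0$ specialisation of Boggio's expression $\frac{\Gamma(d/2)}{2^{\alpha}\pi^{d/2}\Gamma(\alpha/2)^2}\,|x-y|^{\alpha-d}\int_0^{z(x,y)}(1+u)^{-d/2}u^{\alpha/2-1}\,\mathrm{d}u$ with $z(x,y)=(1-|x|^2)(1-|y|^2)/|x-y|^2$, since $z(0,y)=|y|^{-2}-1$.

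The genuine gap is that the entire nontrivial content of the theorem lies in the step you defer. Saying that ``a Kelvin-type substitution, engineered so that $(r,s)$ is replaced by a single variable $u$'' decouples the integrand is not an argument: a two-variable integral does not collapse to a one-variable one without an explicit change of variables (or an intermediate evaluation of the Riesz potential of the exit density via a hypergeometric or beta identity), and you neither exhibit the substitution, verify that it produces the factor $|y|^{\alpha-d}\int_0^{|y|^{-2}-1}(u+1)^{-d/2}u^{\alpha/2-1}\,\mathrm{d}u$, nor carry out the constant bookkeeping (where the reflection formula must cancel the $\sin(\pi\alpha/2)$ from Theorem \ref{BGR} against $\Gamma$-factors to leave $\Gamma(\alpha/2)^{-2}$). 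You concede this is the ``principal obstacle'' and the crux of Blumenthal, Getoor and Ray's derivation; as it stands, then, your text is a plausible plan with the decisive computation asserted rather than proved. To close the gap you would need either to perform that evaluation in detail or to cite a precise source (e.g.\ Blumenthal--Getoor--Ray's Theorem on the ball Green function, or Boggio's formula as discussed in the cited surveys) for the identity, in which case the proof reduces to the bookkeeping already indicated.
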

																																																																														            
	\noindent Whilst the above identity is presented in a probabilistic context, it has a much older history in the analysis literature. Known as Boggio's formula, the original derivation in the setting of potential theory  dates back to \autocite{Bog}. See the discussion in \autocite{DR,bucur}.
																																																																														            
	In the next result, we will write as a slight abuse of notation  $V_r(x,\rhs(\cdot)) = \int_{|y-x|<r}\rhs(y)\,V_r(x,{\rm d}y)$ for bounded measurable $\rhs$.
																																																																														            
	\begin{lem}\label{integral}For $x\in D$, $\bc\in L^1_\alpha(D^\mathrm{c})$ and $\rhs\in C^{\alpha +\varepsilon}(\overline{D})$, we have the representation 
		\[
			u(x) =\mathbb{E}_x[\bc(\rho_{N})] + \mathbb{E}_x\left[\sum_{n=0}^{N-1} r_n^{\alpha} V_{1}(0, \rhs(\rho_n + r_n\cdot))\right].
		\]
	\end{lem}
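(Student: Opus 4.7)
The plan is to start from the Feynman--Kac representation \eqref{non_homg_FK} provided by Theorem~\ref{hasacorr} and rewrite both of its terms in terms of the walk-on-spheres sequence $(\rho_n, n\leq N)$. The boundary term $\mathbb{E}_x[\bc(X_{\sigma_D})]$ coincides with $\mathbb{E}_x[\bc(\rho_N)]$: iterating the strong Markov property of $X$ at the exit times of the successive maximal inscribed balls $B_1,B_2,\dots$ shows that $X$ at the first of these exit times that lands outside $D$ has the same distribution as $X_{\sigma_D}$, and this position is by construction $\rho_N$. Theorem~\ref{main} (or its extensions in Section~\ref{non_convex}) ensures $N<\infty$ almost surely, so this identification is legitimate.

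For the volume term, I would decompose the time interval $[0,\sigma_D]$ into the sojourns of $X$ inside each $B_n$. Writing $\tau_n$ for the exit time of $X$ from $B_n$ (with $\tau_0=0$), one has $\sigma_D=\tau_N$ almost surely and
\[
\int_0^{\sigma_D}\rhs(X_s)\,\mathrm{d}s \;=\; \sum_{n=1}^{N}\int_{\tau_{n-1}}^{\tau_n}\rhs(X_s)\,\mathrm{d}s.
\]
The strong Markov property applied at each $\tau_{n-1}$ gives that, conditionally on $\rho_{n-1}$ and $\{N\geq n\}$, the expectation of the $n$-th inner integral equals the integral of $\rhs$ against the expected occupation measure of the stable process issued from $\rho_{n-1}$ and killed upon exiting $B_n$, namely $\int \rhs(y)\,V_{r_n}(\rho_{n-1},\mathrm{d}y)$. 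The next step is to reduce this back to the reference object $V_1(0,\mathrm{d}z)$ at the origin by appealing to spatial homogeneity and the self-similarity \eqref{levyscaling}, mimicking the computation \eqref{tscale} in Lemma~\ref{scaled}: the joint change of variables $t\mapsto r_n^{\alpha}t$ for the time integral and $y=\rho_{n-1}+r_n z$ for the spatial integral yields
\[
\int \rhs(y)\,V_{r_n}(\rho_{n-1},\mathrm{d}y) \;=\; r_n^{\alpha}\int_{|z|<1}\rhs(\rho_{n-1}+r_n z)\,V_1(0,\mathrm{d}z) \;=\; r_n^{\alpha}\,V_1(0,\rhs(\rho_{n-1}+r_n\cdot)).
\]
Summing in $n$ and taking the outer expectation (relabelling the summation index to match the statement) produces the claimed representation.

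The main technical obstacle is justifying the interchange of the (random) finite sum over $n$, the inner integral against $V_1(0,\mathrm{d}z)$, and the outer expectation under $\mathbb{P}_x$. A Fubini--Tonelli argument suffices once one has an integrable dominating bound; this is available from the stochastic geometric bound on $N$ supplied by Theorem~\ref{main}, the uniform boundedness of $\rhs$ on $\overline{D}$ inherent in the $C^{\alpha+\varepsilon}(\overline{D})$ hypothesis, the finiteness of the total mass $V_1(0,B(0,1))$ from \eqref{V}, and the trivial estimate $r_n\leq \operatorname{diam}(D)$. The remaining verifications are bookkeeping on the events $\{N\geq n\}$ and a careful matching of the indexing between the sphere sequence and the sum in the statement.
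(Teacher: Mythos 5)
Your proposal is correct and follows essentially the same route as the paper's own proof: decompose the occupation integral $\int_0^{\sigma_D}\rhs(X_s)\,{\rm d}s$ sphere-by-sphere via the strong Markov property (the paper writes it as $\sum_{n\geq 0}\mathbf{1}_{\{\rho_n\in D\}}\int_0^{\sigma_{n+1}}\rhs(\rho_n+X^{(n+1)}_s)\,{\rm d}s$ with Fubini and conditioning on $\mathcal{F}_n=\sigma(\rho_k\colon k\leq n)$), and then reduce $V_{r_n}(\rho_n,\rhs(\cdot))$ to $r_n^{\alpha}V_1(0,\rhs(\rho_n+r_n\cdot))$ by exactly the spatial-homogeneity and scaling computation you describe. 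Your added remarks on identifying $\mathbb{E}_x[\bc(X_{\sigma_D})]=\mathbb{E}_x[\bc(\rho_N)]$ and on the Fubini domination are harmless elaborations of points the paper treats implicitly.
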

	\begin{proof}
		Given the walk-on-spheres $(\rho_n, n\leq N)$ with $\rho_0 = x\in D$,  define $\sigma_n$ jointly with $\rho_n$ so that, given $\rho_{n-1}$,  $(\rho_n, \sigma_n)$ is equal in law to $(X_{\sigma_{B_n}}, \sigma_{B_n})$ under $\mathbb{P}_{\rho_{n-1}}$.
		We can now  represent the second  expectation on the right-hand side of (\ref{non_homg_FK}) in the form 
		\begin{equation}
			\mathbb{E}_x\left[\sum_{n\geq 0} \mathbf{1}_{\{\rho_n\in D\}} \int_{0}^{\sigma_{n+1}} \rhs\left(\rho_n + X^{(n+1)}_s\right)\,\mathrm{d}s\right],
			\qquad x\in D,
			\label{withrho}
		\end{equation}
		where $X^{(n)}$ are independent copies of $(X, \mathbb{P}_0)$. 
		Applying Fubini's theorem, then conditioning each expectation on $\mathcal{F}_{n}\coloneqq \sigma(\rho_k\colon k\leq n)$ followed by Fubini's theorem again, we have 
		\begin{align*}
			\mathbb{E}_x\left[\int_0^{\sigma_D} \rhs(X_s)\,\mathrm {d}s\right] & =\sum_{n\geq 0}\mathbb{E}_x\left[ \mathbf{1}_{\{\rho_n\in D\}} \left. 
			\mathbb{E}_{y}\left[\int_{0}^{\sigma_{B(y,r)}} \rhs( X_s)\,\mathrm{d}s\right] 
			\right|_{y = \rho_n, r = r_n}\right]\\
			& =\sum_{n\geq 0}\mathbb{E}_x\left[ \mathbf{1}_{\{\rho_n\in D\}}        
			V_{r_n}(\rho_n, \rhs(\cdot))
			\right]\\
			& =\mathbb{E}_x\left[\sum_{n= 0}^{N-1} %\mathbb{1}_{(\rho_n\in D)}                                                                                                                                                                                                                                                                                                                                                                                                                                                                                                                                                                                                                                                                                                                                                                                                                                                                                                                                                                                                                                                                                                                                                                                                                                                                                                                                                                                                                                                                                                                                                                                                                                                                                                                                                                                                                                                                                                                                                                                                                                                                                                                                                                                                                                                                                                                                                                                         
			V_{r_n}(\rho_n, \rhs(\cdot))
			\right].
		\end{align*}
		The proof is completed once we show that $V_r(x,g) = r^{\alpha}V_1(0, \rhs(x + r\cdot)),$ for $r>0$, $x\in\mathbb{R}^d$ and bounded measurable $\rhs$.
		To this end, we appeal to spatial homogeneity and the, now, familiar computations using the scaling property of stable processes:
		\begin{align}
			V_r(x,\rhs(\cdot)) & = \mathbb{E}_x\left[\int_0^{\sigma_{B(x,r)}} \rhs(X_t) \,{\rm d}t\right]\nonumber                   \\
			         & = \mathbb{E}_0\left[\int_0^{\sigma_{B(0,r)}} \rhs(x+X_t) \,{\rm d}t\right]\nonumber                 \\
			         & = \mathbb{E}_0\left[\int_0^{\sigma_{B(0,1)}} r^{\alpha}\,\rhs(x+ r\,X_s) \,{\rm d}s\right]\nonumber \\
			         & =\int_{|y|<1} r^{\alpha} \rhs(x + r\,y)\, V_1({0,\rm d}y)\nonumber                                      \\
			         & =r^\alpha \,V_1(0, \rhs(x + r\,\cdot)).                                                             
			\label{timescale}
		\end{align}
		The proof is now complete.
	\end{proof}

	Lemma \ref{integral} now informs a Monte Carlo procedure based on simulating the quantity
	\[
		\chi \coloneqq \bc(\rho_{N}) + \sum_{n=0}^{N-1} r_n^{\alpha}\, V_{1}(0, \rhs(\rho_n + r_n\cdot))
		, \qquad x\in D,
	\]
	which is again justified by an obvious \lln and the \clt in the spirit of Corollary \ref{rate1}.
	\begin{cor}\label{rate2}
		When $D$ is  bounded and convex, $\bc$ is continuous and  in $ L^1_\alpha(D^\mathrm{c})$ and $\rhs$ is a function in $ C^{\alpha +\varepsilon}(\overline{D})$ for some $\varepsilon>0$, then
		\begin{equation}
			\label{WoSMC3}
			\lim_{n \to\infty} \frac{1}{n}\sum_{i = 1}^n \chi^{i} = \mathbb{E}_x\left[ \bc(\rho_{N}) + \sum_{n=0}^{N-1} r_n^{\alpha}\, V_{1}(0, \rhs(\rho_n + r_n\cdot))\right] %=\mathbb{E}_x[\bc(X_{\tau_D})] 
			= u(x),
		\end{equation}
		almost surely where
		$\chi^{i}$, $i\geq 1$ are \iid copies of $\chi$ and  $u(x)$ is the solution to (\ref{aDirichlet_g}).
		Moreover, when 
		\begin{equation}
			\int_{D^\mathrm{c}}\frac{\bc(x)^2}{1+|x|^{\alpha + d}}\,{\rm d}x<\infty.
			\label{f22}
		\end{equation}
		then $\operatorname{Var}(\chi)<\infty$ and, in the sense of weak convergence,  
		\[
			\lim_{n\to\infty}n^{1/2}\left(\frac 1n \sum_{i=1}^n  \chi^{i}- u(x)\right)= \operatorname{Normal}(0,  \operatorname{Var}(\chi)).
		\]
	\end{cor}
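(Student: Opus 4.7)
My plan is to mirror the proof of Corollary \ref{rate1}, decomposing
\[
\chi = \bc(\rho_N) + \Sigma, \qquad \text{where } \Sigma := \sum_{n=0}^{N-1} r_n^{\alpha}\, V_{1}(0, \rhs(\rho_n + r_n\cdot)),
\]
and handling the boundary term and the volume term separately. For the first claim, I would invoke Theorem \ref{hasacorr} together with Lemma \ref{integral} to identify $\mathbb{E}_x[\chi] = u(x)$ as a finite quantity (using $\bc\in L^1_\alpha(D^\mathrm{c})$ and $\rhs\in C^{\alpha+\varepsilon}(\overline{D})$), whereupon almost-sure convergence of the Monte Carlo average is an immediate application of the strong law of large numbers to the i.i.d.\ sequence $\chi^i$.

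For the CLT, it suffices (by $(a+b)^2\leq 2a^2+2b^2$) to verify that both $\mathbb{E}_x[\bc(\rho_N)^2]<\infty$ and $\mathbb{E}_x[\Sigma^2]<\infty$. The first of these goes through exactly as in the proof of Corollary \ref{rate1}: under hypothesis \eqref{f22}, the computation (\ref{worksforsquared}) in the Appendix, applied with $\bc$ replaced by $\bc^2$, gives finiteness of the second moment of the boundary contribution.

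For $\Sigma$, I would first note that $\rhs$ is continuous on the compact set $\overline{D}$, hence $\|\rhs\|_\infty<\infty$, and that the total mass
\[
M := \int_{|y|<1} V_1(0,\mathrm{d}y) = \mathbb{E}_0[\sigma_{B(0,1)}]
\]
is finite (it is manifestly finite from the explicit density \eqref{V}: the factor $|y|^{\alpha-d}$ is integrable at the origin for $\alpha>0$ and $d\geq 2$, and the inner integral in \eqref{V} vanishes like $(1-|y|^2)^{\alpha/2}$ at the boundary). Consequently
\[
\bigl|V_1(0,\rhs(\rho_n + r_n\cdot))\bigr|\leq M\,\|\rhs\|_\infty,
\]
and since $D$ is bounded we also have $r_n\leq R := \operatorname{diam}(D)$. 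Therefore
\[
|\Sigma|\leq N\cdot R^\alpha\, M\,\|\rhs\|_\infty =: C\, N,
\]
so $\mathbb{E}_x[\Sigma^2]\leq C^2\,\mathbb{E}_x[N^2]$. By Theorem \ref{main}, $N$ is stochastically dominated by a geometric random variable $\Gamma$ with parameter $p(\alpha,d)\in(0,1)$, whose second moment is finite; hence $\mathbb{E}_x[N^2]<\infty$ and $\operatorname{Var}(\chi)<\infty$. The final weak-convergence statement is then a direct application of the classical central limit theorem to the i.i.d.\ copies $\chi^i$.

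The one slightly non-routine point is the finiteness of $M=\mathbb{E}_0[\sigma_{B(0,1)}]$, but this is read off at once from the explicit Boggio-type density in \eqref{V}; everything else is a straightforward reduction to the geometric-tail bound on $N$ from Theorem \ref{main} and to the $L^2$ analogue of the Appendix estimate already used for Corollary \ref{rate1}.
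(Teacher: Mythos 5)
Your proposal is correct, and it diverges from the paper precisely at the one non-trivial step, namely the second moment of the inhomogeneous term $\Sigma=\sum_{n=0}^{N-1} r_n^{\alpha}\, V_{1}(0, \rhs(\rho_n + r_n\cdot))$. The paper bounds each summand, via the scaling identity \eqref{timescale} and boundedness of $\rhs$, by a multiple of the sojourn in the $n$th sphere, so that $\Sigma\leq \kappa\,\sigma_D$, and then controls $\mathbb{E}_x[\sigma_D^2]\leq\mathbb{E}_x[\sigma_{B^*}^2]<\infty$ by embedding $D$ in a large ball and citing \autocite{GET}; note this route makes no use of convexity. You instead bound each summand uniformly by $R^{\alpha}M\|\rhs\|_\infty$ with $M=V_1(0,B(0,1))=\mathbb{E}_0[\sigma_{B(0,1)}]<\infty$ (your verification of finiteness from the Boggio density \eqref{V} is right: integrability of $|y|^{\alpha-d}$ at the origin together with the inner integral behaving like $(1-|y|^2)^{\alpha/2}$ near the boundary, using $d>\alpha$), obtaining $|\Sigma|\leq C N$ and then $\mathbb{E}_x[N^2]\leq\mathbb{E}[\Gamma^2]<\infty$ from the geometric domination in Theorem \ref{main}. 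Your argument is more elementary and self-contained (no external second-moment estimate for exit times is needed, and it avoids the paper's somewhat loosely stated pointwise comparison $r_n^{\alpha}V_1(0,\rhs(\rho_n+r_n\cdot))\leq\kappa\sigma_n$, which as written compares a conditional expectation with a random time), at the cost of invoking convexity through Theorem \ref{main}; since the corollary assumes $D$ bounded and convex this is harmless, but the paper's exit-time bound would survive in non-convex bounded domains. The remaining ingredients — the LLN via Theorem \ref{hasacorr} and Lemma \ref{integral}, the splitting of $\mathbb{E}_x[\chi^2]$ (your $(a+b)^2\leq 2a^2+2b^2$ versus the paper's Cauchy--Schwarz is immaterial), and the treatment of $\mathbb{E}_x[\bc(\rho_N)^2]$ via \eqref{worksforsquared} under \eqref{f22} — coincide with the paper's proof.
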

	\begin{proof}
		Theorem \ref{hasacorr} and Lemma \ref{integral} ensure that the \lln may be invoked. For the \clt, we need $\mathbb{E}_x[\chi^2]<\infty$. Taking account of the fact that $\chi$ is the sum of two terms, the Cauchy--Schwarz inequality ensures that $\mathbb{E}_x[\chi^2] $ is finite if 
		$\mathbb{E}_x\left[\bc(\rho_{N})^2\right] $ and $ \mathbb{E}_x\left[\left( \sum_{n=0}^{N-1} r_n^{\alpha}\, V_{1}(0, \rhs(\rho_n + r_n\cdot))\right)^2\right]$ are finite. Recall that  $\mathbb{E}_x\left[\bc(\rho_{N})^2\right] = \mathbb{E}_x[\bc(X_{\sigma_D})^2]$ and, from Corollary \ref{rate1},  that \eqref{f22} is sufficient to ensure that this expectation is bounded.
																																																																																																																																																										                        
		Now note that, on account of the fact that $\rhs$ is bounded, there exists a constant $\kappa\in(0,1)$, such that, for each $n\leq N$, appealing to (\ref{timescale}), we have $r_n^{\alpha}\, V_{1}(0, \rhs(\rho_n + r_n\cdot))\leq \kappa \sigma_n$, where $\sigma_n$ is the time it takes for the walk-on-spheres to exit the $n$th sphere.  Thus 
		$\sum_{n=0}^{N-1} r_n^{\alpha}\, V_{1}(0, \rhs(\rho_n + r_n\cdot))\leq\kappa  \sum_{n=0}^{N-1} \sigma_n = \kappa\,\sigma_D$.
		We thus have that 
		\[
			\mathbb{E}_x\left[\left( \sum_{n=0}^{N-1} r_n^{\alpha}\, V_{1}(0, \rhs(\rho_n + r_n\cdot))\right)^2\right]\leq \kappa^2 \mathbb{E}_x[\sigma_D^2].
		\]
		However, the latter expectation can be bounded by $\mathbb{E}_x[\sigma_{B^*}^2]$, where $B^* = B(x, R)$ for some suitably large $R$ such that $D$ is compactly embedded in $B^*$. Moreover, appealing to \autocite{GET}, we know that  $\mathbb{E}_x[\sigma_{B^*}^2]$ is bounded. 
	\end{proof}

	\section{Numerical experiments}\label{numerics}
	In the following section, all of the routines associated with the simulations are publicly available at the following repository: 
																																																																														            
	\medskip
																																																																														            
	\url{https://bitbucket.org/wos_paper/wos_repo}

	\medskip
																																																																														            
	\noindent For the Monte Carlo procedure, independent copies of the walk-on-spheres $(\rho_n, n\leq N)$ need to be simulated whereby, by the Markov property, every new point in the sequence can be expressed as $\rho_{n+1}=\rho_n+X'_{\sigma'_{B(0,{r_n})} }$, where $X'$ is an independent version of $X$ and $$\sigma'_{B(0,{r_n})}  = \inf\{t>0\colon X'_t \not\in B(0,{r_n})\}.$$ In other words, $\rho_{n+1}$ is an exit point from a ball $B(0,{r_n})$ under $\mathbb{P}_0$ translated by $\rho_n$. A consequence of  Lemma \ref{BGR} is that the exit distribution of $X'_t$ from $B(0,r_n)$, $r_n>0$,  can be, via a change of variable $y = \tilde{y}/r_n$, written as 
	\begin{equation}
		\mathbb{P}_0(X_{\sigma_{B(0,r_n)}}\in {\rm d}\tilde{y}) = \pi^{-(d/2+1)}\Gamma(d/2)\sin(\pi\alpha/2)\left|r_n^2-|\tilde{y}|^2\right|^{-\alpha/2}|\tilde{y}|^{-d} r_n^{\alpha}\,{\rm d}\tilde{y}, \qquad  |\tilde{y}|>r_n.
		\label{rdy}
	\end{equation}
	For $d=2$, it is more convenient to work with polar coordinates   $(r,\theta)$ in order to separate variables in \eqref{rdy}. Indeed, recalling that  ${\rm d}\tilde{y}=r\,{\rm d}r\,{\rm d}\theta$, we have
	\begin{align}
		\mathbb{P}_0(X_{\sigma_{B(0,r_n)}}\in {\rm d}\tilde{y}) & = \frac{2}{\pi}\sin(\pi\alpha/2)\left(r^2-r_n^2\right)^{-\alpha/2} r_n^{\alpha}\,\frac{{\rm d}r}{r} \times \frac{{\rm d}\theta}{2\pi}\,,\qquad r>r_n.\label{DISPOL} 
	\end{align}
	From \eqref{DISPOL}, we see that the angle $\theta$ is sampled uniformly on $[0,2\pi]$ whereas we can sample the radius $r$ via the inverse-transform sampling method. %pdf and cdf respectively
	%\begin{align*}
	%\bc(x)&=2\pi^{-1}\sin(\pi\alpha/2)r_n^{\alpha}\left(x^2-r_n^2\right)^{-\alpha/2}x^{-1}\\
	%F(x)&=\int_{r_n}^{x}\bc(t)dt=-\pi^{-1}\sin(\pi\alpha/2)B(r_n^2/x^2;\alpha/2,1-\alpha/2)+1
	%\end{align*}
	%where $B(z;a,b)=\int_{0}^{z}u^{a-1}(1-u)^{b-1}{\rm d}u$ is the incomplete beta function. The 
	To this end, noting that $\sin(\pi\alpha/2)B(\alpha/2, 1-\alpha/2)=\pi$, the first factor on the right-hand side of \eqref{DISPOL} is the density of a distribution with cumulative distribution function $F$. The inverse of $F$  can be identified as follows:
	For $x\in[0,1]$, %$z=\alpha/2$ and $w=1-\alpha/2$,% is then
	\begin{align*}
		F^{-1}(x)=r_n\left(I^{-1}(%-%\pi
		%(x+1)%B(z,w)/\sin(\pi\alpha/2)
		1-x;\alpha/2,1-\alpha/2))\right)^{-1/2}, % \qquad x\in[0,1],                                                                                  
	\end{align*}
	where $I^{-1}(x;z,w)$ is the inverse of the incomplete beta function 
	\[
		I(x;z,w)\coloneqq\frac{1}{B(z,w)}\int_{0}^{x}u^{z-1}(1-u)^{w-1}\,{\rm d}u, \qquad x\in [0,1],
	\]
	and $B(z,w)\coloneqq \int_{0}^{1}u^{z-1}(1-u)^{w-1}\,{\rm d}u$ is the beta function. 
																																																																														             
	The homogeneous part of the solution to \eqref{aDirichlet_g} is somewhat easier to compute than  the inhomogeneous part, which additionally involves  numerical computation of  the integral $r_n^{\alpha}V_1(0,\rhs(\rho_n+r_n\cdot))$ in \eqref{WoSMC3}. 
	To develop this expression, we use the substitution $u=(1-t)/t$ for the integral in \eqref{V} and hence, when $d = 2$,
	% for $z=1-\alpha/2$ and $w=\alpha/2$, $V_1(0,{\rm d}y)$ can be expressed as
	for $|y|<1$,
	\begin{equation} 
		V_1(0,{\rm d}y)=c_{2,\alpha}B(1-\alpha/2,\alpha/2)|y|^{\alpha-2}(1-I(|y|^2;1-\alpha/2,\alpha/2))
	\end{equation}
	with $c_{2,\alpha}=2^{-\alpha}\pi^{-1}\Gamma(\alpha/2)^{-2}$. Moreover, by converting to polar coordinates $(r,\theta)$, the simulated quantity at step $n$ becomes
	\begin{align*}
		  & r_n^{\alpha}V_1(0,\rhs(\rho_n+r_n\cdot))                                                                                        \\
		  & =r_n^{\alpha}c_{2,\alpha}B(1-\alpha/2,\alpha/2) \int_{|y|<1}                                                                             
		\rhs(\rho_n+r_ny)|y|^{\alpha-2}\left(1-I(|y|^2;1-\alpha/2,\alpha/2)\right)\,{\rm d}y\\
		  & =r_n^{\alpha} c_{2,\alpha}B(1-\alpha/2,\alpha/2)2\pi \alpha^{-1}\int_{0}^{1}\int_{-\pi}^{\pi} \rhs(\rho_n+r_n r(\cos\theta,\sin\theta)) \\
		  & \hspace{7cm}\times\left(1-I(r^2;1-\alpha/2,\alpha/2)\right)\frac{{\rm d}\theta}{2\pi}\times \alpha r^{\alpha-1}\,{\rm d}r.   
	\end{align*}
	We used the Monte Carlo approach for evaluating this integral. Consider independent random variables $\Theta\sim U(-\pi,\pi)$ and $R=X^{1/\alpha}$ such that $X\sim U(0,1)$. Then $R$ has the probability density function $f_R(r)=\alpha r^{\alpha-1}$ and we want to evaluate
	\begin{equation}
		r_n^{\alpha}V_1(0,\rhs(\rho_n+r_n\cdot))=a_{2,\alpha} r_n^\alpha \mathbb{E}\left[\left(1-I(R^2;1-\alpha/2,\alpha/2)\right)\rhs(\rho_n+r_n R(\cos\Theta,\sin\Theta))\right] \label{MCEXP}
	\end{equation}
	with $a_{2,\alpha}=\alpha^{-1}2^{-\alpha+1}\Gamma(\alpha/2)^{-2}B(1-\alpha/2,\alpha/2)$. We simulate $n_{R,\Theta}$ samples of pairs $(R,\Theta)$ and compute the sample mean of the quantity in \eqref{MCEXP}. The quantity is evaluated more efficiently by writing \begin{equation}
	\rhs(\rho_n+r_n R(\cos\Theta,\sin\Theta))= \rhs(\rho_n)+[\rhs(\rho_n+r_n R(\cos\Theta,\sin\Theta))-\rhs(\rho_n)]
	\label{[]}
	\end{equation}
	This gives two terms: one can be evaluated directly (by storing $\mathbb{E} [1-I(R^2; 1-\alpha/2, \alpha/2)]$) and the second can be evaluated using a Monte Carlo method, but with smaller variance (as the quantity in square brackets in \eqref{[]} is $\order{r_n}$). It is worth noting that a similar mixed approach using the trapezoidal rule over $\theta$ and randomising $r$ as earlier  for evaluating the left-hand side of  \eqref{MCEXP}  was also tested. However, results showed that the pure Monte Carlo approach is, in comparison to the mixed one, superior with regards to accuracy and computational cost. With this view, we decided to focus on the first one.
																																																																														            
	Accuracy of this algorithm and its feasibility of implementation was checked with model solutions to problems of the type \eqref{aDirichlet_g} and they are presented below in order. 
	%\bigskip
	%\hrule
	%
	%\bigskip
	%
	%AEK UNTIL HERE - STILL WORKING ON IT
	%
	%
	%\bigskip
	%
	%\hrule 
	%
	%\bigskip

	\subsection{Free-space Green's function} 
	The free-space Green's function for the fractional Laplacian $(-\Delta)^{\alpha/2}$   is
	\[
		G(x, y)=c_{d,\alpha} \frac{1}{|x-y|^{\alpha-d}}
	\]
	for a constant $c_{d,\alpha}$ for $d>1$ and $\alpha\in(0,2$); see \autocite{bucur}. 
	If the  point $y$ is chosen outside a domain $D$, then we can construct $G$ as an exact solution to the homogeneous version of the fractional Dirichlet problem in
	\eqref{aDirichlet}; that is,  $u(x)=G(x,y)$ for $x\in D$ and $\bc(x)=G(x,y)$ for $x\not\in D$.
	Figure \ref{fig:test4} shows the results of applying the walk-on-spheres algorithm to evaluate $u(0.6, 0.6)$ with  $10^6$ samples, where $D$ is a unit ball in $\mathbb{R}^2$ centred at the origin and $y=(2,0)$. We observe the samples $\bc(\rho_{N})$ have larger variance when $\alpha$ is small and a larger error results from the same number of samples.  % {\color{red} ... TODO}
	\begin{figure}
		\centering
		\includegraphics{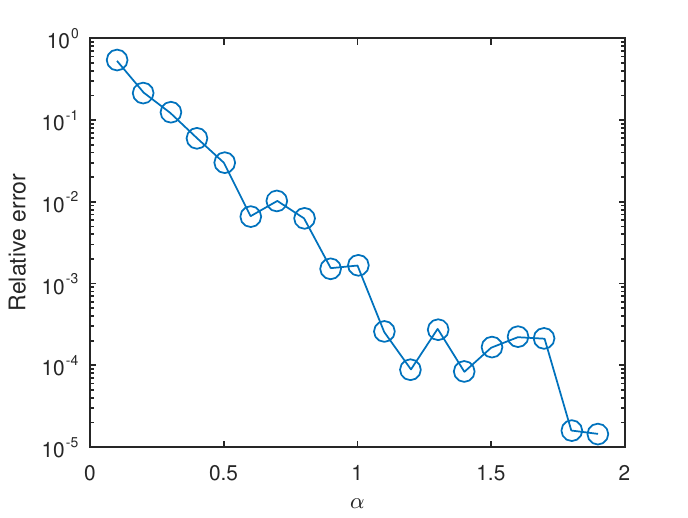}\quad
		\includegraphics{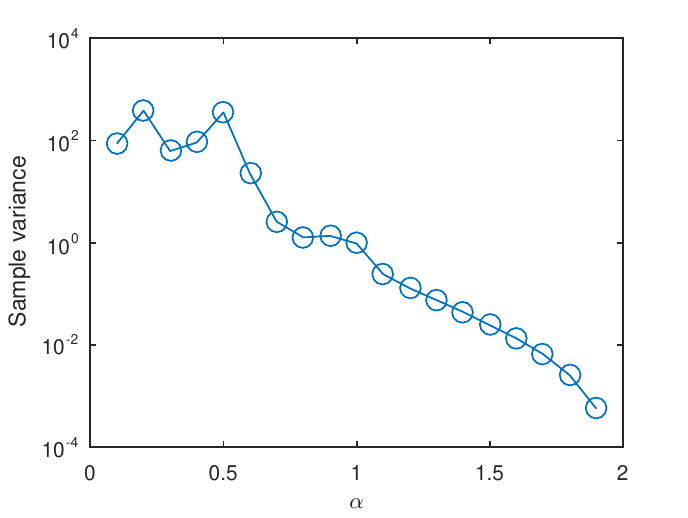}\\
		\caption{Example simulation for \eqref{aDirichlet} with exterior data $\bc(x)=G(x,y)$ with $y=(2,0)$ on the domain given by the unit ball centred at the origin,  based on $10^6$ samples.  The left-hand plot shows the relative error and the right-hand plot shows the sample variance. The sample variance is larger for small $\alpha$ as the process stops further away from the boundary and can see the singularity at $(2,0)$ in the exterior data. Accordingly, the relative error is higher as we are using a fixed number of samples.} \label{fig:test4}
	\end{figure}
																																																																														            
	\subsection{Gaussian data} 
																																																																														            
	For the Poisson problem \eqref{aDirichlet_g}, we take $D$ to be the unit ball in $\mathbb{R}^2$,  exterior data
	\[\bc(x)=\exp(-| x-y|^2), \qquad x\in D^{\rm c},
	\]
	for a given $y\in \mathbb{R}^2$, and zero source term $\rhs=0$. We can represent the solution to \eqref{aDirichlet} in $D$  by
	\begin{equation}
		u(x)%
		=\pi^{-2} \sin(\pi \alpha/2)%
		\int_{D^{\texttt{c}}}%
		\left( \frac{1-|x|^2} {|y|^2-1} \right)^{\alpha/2} \frac{1} {|y-x|^2} \exp(-|x-y|^2) \,{\rm d}y,
		\qquad x\in D.
		\label{eq:ana_T4.4}
	\end{equation}
	This integral can be computed numerically via a quadrature approximation. Here, instead of a fixed number of samples, the number of samples is taken adaptively based on a tolerance $\varepsilon$ for the computed sample standard deviation.
	%{\color{red}T4.4 of Ana's Thesis}
	Figure \ref{fig:test5} shows the results with $y=(2,0)$ and tolerance $\varepsilon=10^{-4}$ for evaluation of $u(0.6,0.6)$ as previously. The estimator standard deviation and absolute error exhibit no obvious trend, whereas the sample variance peaks at about $\alpha=0.6$. Also at this value, the largest number of samples is needed to satisfy the tolerance. Despite the sample variance decreasing after $\alpha=0.6$, there is an increasing trend in the amount of work required. This implies that the increase in the number of steps with $\alpha$ (see Figure \ref{fig:steps}) dominates and therefore a solution point of accuracy $10^{-4}$ is computationally more costly for larger values of $\alpha$.
	\begin{figure}
		\centering
		\includegraphics{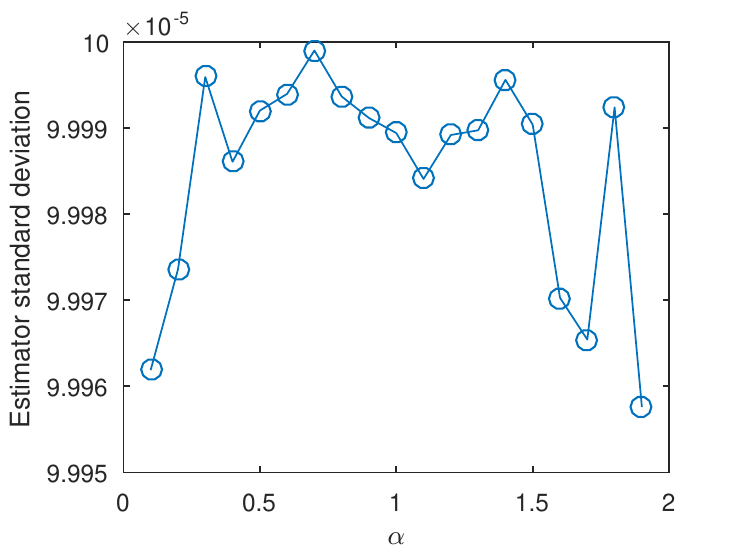}\quad
		\includegraphics{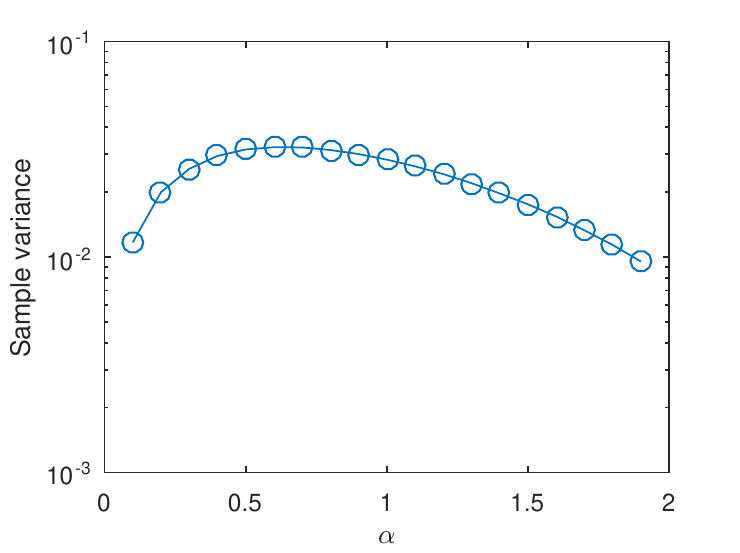}\\
		\includegraphics{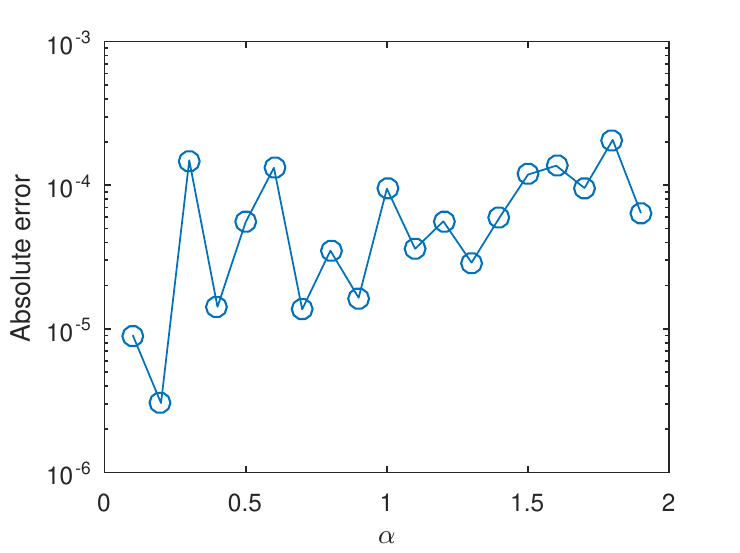}\quad
		\includegraphics{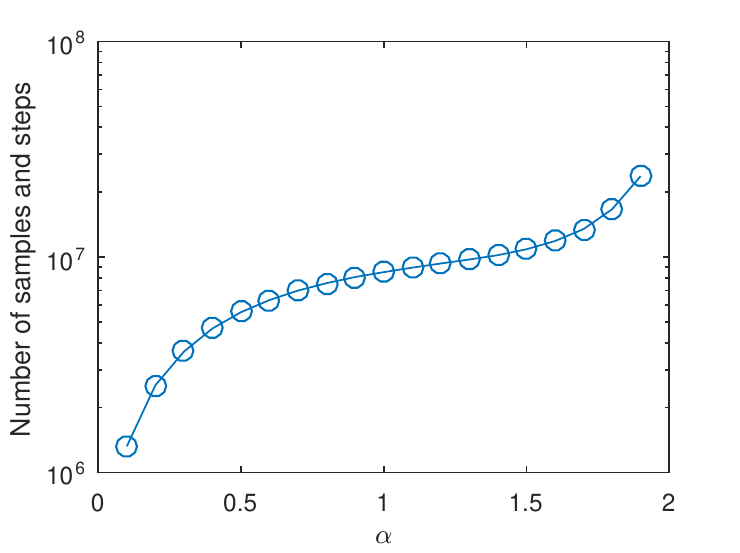}
																																																																																																																																																										                          
		\caption{Example simulation with the walk-on-spheres algorithm for \eqref{aDirichlet} based on desired tolerance of $10^{-4}$. From top left to bottom right, we see the standard deviation of the estimator, the sample variance, the absolute error (using a quadrature approximation for \eqref{eq:ana_T4.4} for the reference value), and the amount work (number of samples $\times$ mean number of steps).} \label{fig:test5}
	\end{figure}
																																																																														            
	\subsection{Non-constant source term} Suppose that, again in the context of \eqref{aDirichlet_g}, we again take  $D$  to be equal to the unit ball and the source term equal to 
	\[
		%
		%out = (2^alpha)*gamma(2+(alpha/2))*gamma(1+(alpha/2))*...
		%(1*ones(1,m)-((1+(alpha/2))*sum(x.^2)));
		\rhs(x)=2^{\alpha} \Gamma(2+\alpha/2) \Gamma(1+\alpha/2) (1-(1+\alpha/2) \|{x}\|^2), \qquad x\in D,
	\]
	and zero exterior data $\bc=0$.
	%out = max([0*ones(1,m);(1*ones(1,m)-sum(x.^2)).^(1+(alpha/2))]);
	This has the exact solution $u(x)=\max\{0, 1-\|x\|^2\}^{1+\alpha/2}$; cf. \autocite{Dyda2012-kl}. The behaviour of the algorithm is shown in Figure~\ref{fig:dyda}. As expected, we again observe no obvious trend in estimator standard deviation and absolute error. The sample variance of sums of Monte Carlo-generated integrals increases with $\alpha$ as does the number of samples accordingly. Work required grows with $\alpha$ as in Figure \ref{fig:test5}, but with a slightly steeper trend. Notice that accuracy of $10^{-4}$ for the inhomogeneous part of the solution would demand a lot more work than the homogeneous part in Figure \ref{fig:test5}.
	\begin{figure}
		\centering
		\includegraphics{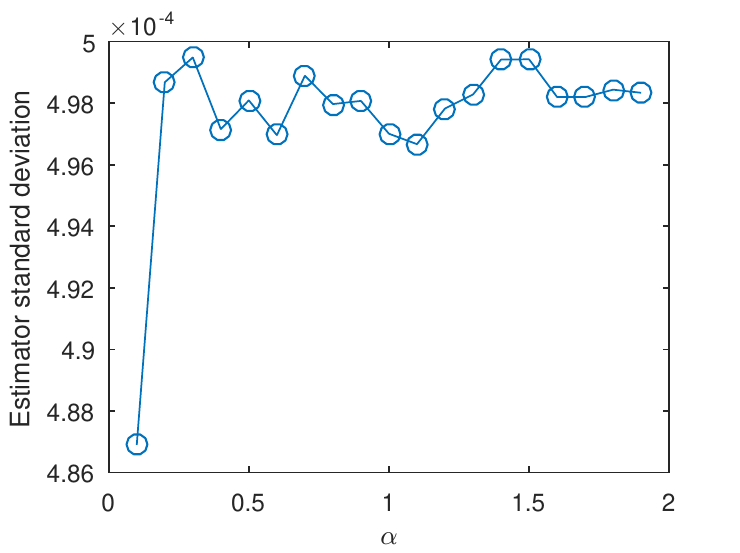}\quad
		\includegraphics{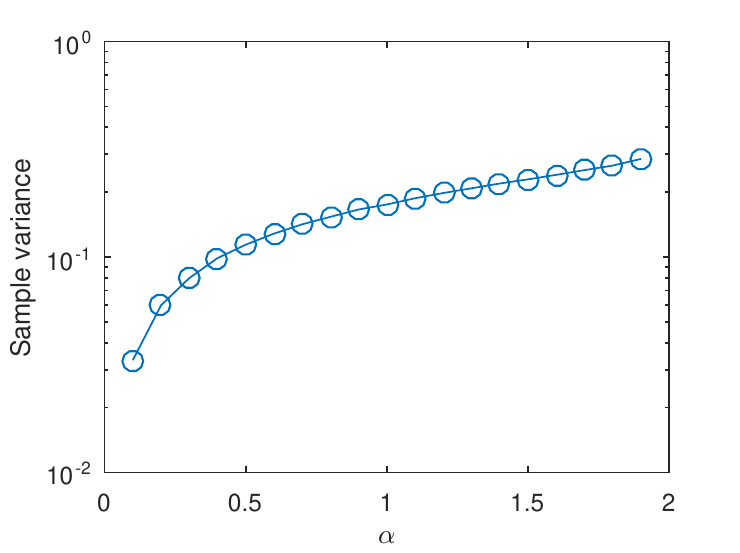}\\
		\includegraphics{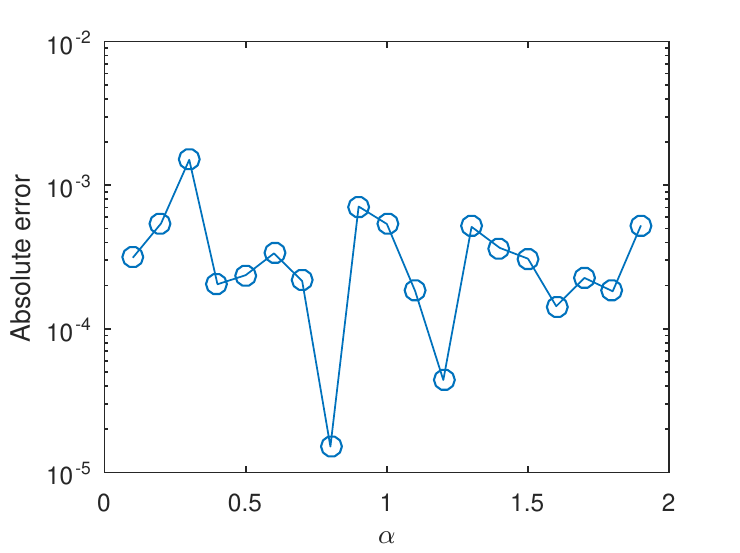}\quad
		\includegraphics{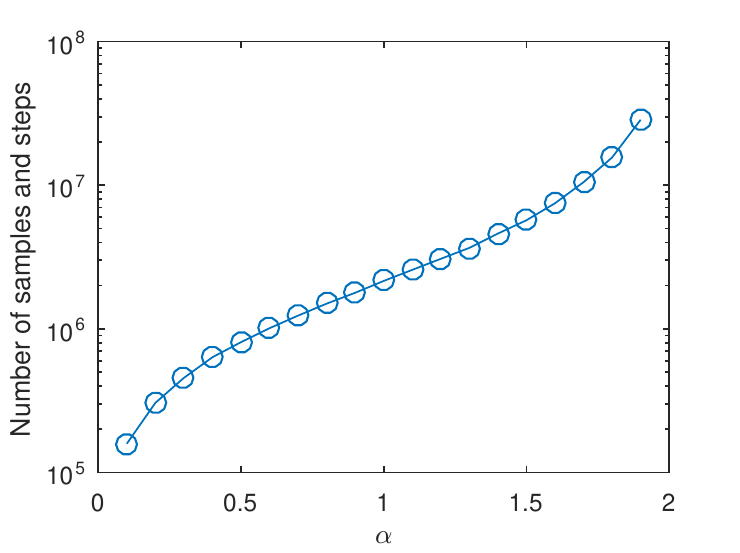}
																																																																																																																																																										                          
		\caption{Example simulation with the walk-on-spheres algorithm for \eqref{aDirichlet_g} based on desired tolerance of $10^{-3}$ and $n_{R,\Theta}=1000$. From top left to bottom right, we see the standard deviation of the estimator, the sample variance, the absolute error, and the amount work (number of samples $\times$ mean number of steps).} \label{fig:dyda}
	\end{figure}
																																																																														            
	%\begin{wrapfigure}{r}{5.5cm}
	% \caption{A wrapped figure going nicely inside the text.}\label{wrap-fig:1}
	% \includegraphics[width=5.5cm]{sample}
	%\end{wrapfigure} 
	\begin{figure}
		\centering
		\includegraphics{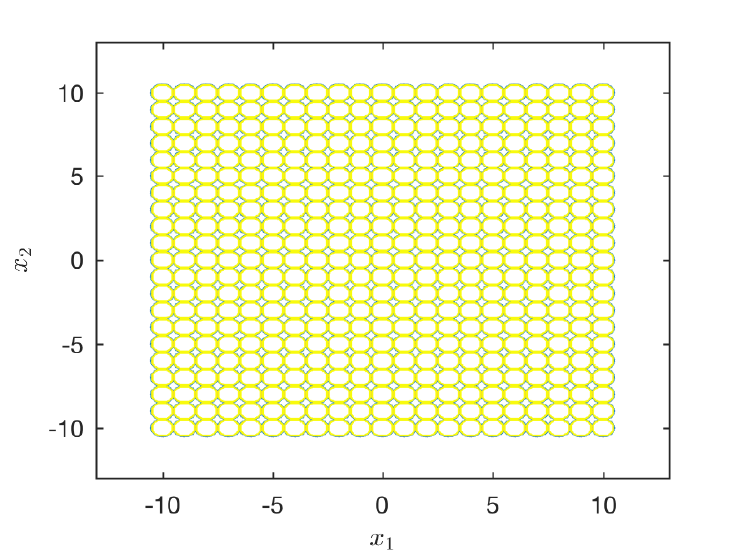}

		\caption{`Swiss cheese' domain (interior of balls).} \label{fig:cheese}
	\end{figure}
																																																																														            
	\subsection{Distribution of the number of steps in convex and non-convex domains}
	In previous sections, a large focus was put on deriving upper bounds and limiting distributions for $N$. Here we provide numerical support for these theoretical results. The walk-on-spheres algorithm was simulated inside a unit-ball domain centred at the origin as well as inside a domain of a hundred touching unit balls centred at points $(i,j)$, $i,j=-10,\dots,10$, the so-called `Swiss cheese' domain as shown in Figure \ref{fig:cheese}.  The first represents a convex domain whereas the latter a non-convex one. The algorithm was started at a point $x=(\sqrt{0.29},-\sqrt{0.7})$ which lies very close to the boundary in both domains. This point was chosen as numerical simulations in a unit-ball domain revealed that the mean number of steps decreases with increasing distance from the boundary of the starting point. Theorem \ref{main} states that $N$ is stochastically dominated by a geometric distribution with parameter $p(\alpha,d)$. In two dimensions, we are able to numerically compute $p(\alpha,2)$ since it is the solution to \eqref{aDirichlet} with $D = B(0, 1)$, $\bc(x)=\mathbf{1}_{\{x_1<-1\}}(x)$ and zero source term $\rhs=0$ as deduced from Corollary \ref{indicators}. We computed values of $p(\alpha,2)$ for different $\alpha$ to accuracy $10^{-4}$. 
																																																																														            
	The left-hand histogram in Figure \ref{fig:hist} confirms stochastic dominance of $\Gamma$ and an exponentially decaying tail as stated in Remark 2 of Theorem \ref{main}. However, the right-hand histogram shows that this statement fails in the particular example of the Swiss cheese domain. % non-convex domain and suggests the need for a different upper bound as deduced in Section \ref{non_convex}.
	Moreover, the plot of the mean number of steps against $\alpha$ in Figure \ref{fig:steps} shows the observed value of $\mathbb{E}_x[N]$ is bounded above by $1/p(\alpha,2)$ for the unit-ball domain. On the other hand, this is not the case for the Swiss cheese domain,  where the observed value of $\mathbb{E}_x[N]$ exceeds $1/p(\alpha,2)$ for $\alpha$ in the range $(0.3,1.6)$. %Variability of exit points at these values of $\alpha$ explore the rest of unit discs in the domain unlike for $\alpha$ outside of this range. 
																																																																														            
	An explanation for why this is  happening might be as follows. At larger values of $\alpha$, the path of $X$ starts resembling that of a Brownian motion (albeit with a countable infinity of arbitrarily small discontinuities). The process $X$ is started inside a ball in the Swiss cheese. When it exits this ball, its exit position is relatively close to the boundary with high probability. Therefore the exit point of the ball containing the point of issue is more likely to be in the `cheese' (which would cause an end to the algorithm) and less likely to be inside another vacuous ball. Accordingly,  $\mathbb{E}_x[N]$ does not deviate largely from the example of a single ball. However, for small values of $\alpha$, exit points from the sphere containing the point of issue  have a higher probability to be far from the boundary, landing inside another vacuous ball, thereby requiring the algorithm to continue. In that case, the comparison with the case of exiting a single sphere breaks down. 
	\begin{figure}
		\centering
		\includegraphics{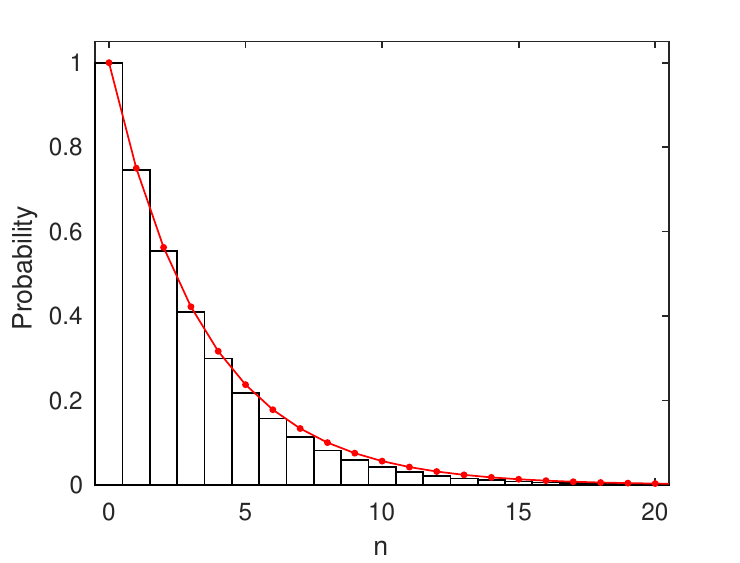}\quad
		\includegraphics{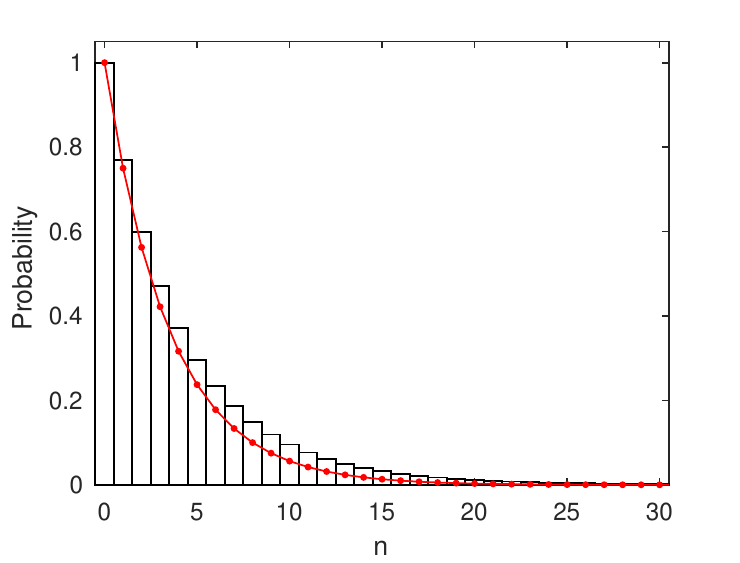}

		\caption{Histogram of the proportion of runs of the walk-on-spheres algorithm with $\alpha=1$ for which $N>n$ for the unit-ball domain (left) and the Swiss cheese domain (right). The red curve shows the tail of Geom($p(1,2)$), this is $(1-p(1,2))^n$, as in Remark 2 of Theorem~\ref{main}.} \label{fig:hist}
	\end{figure}
																																																																														            
	\begin{figure}
		\centering
		\includegraphics{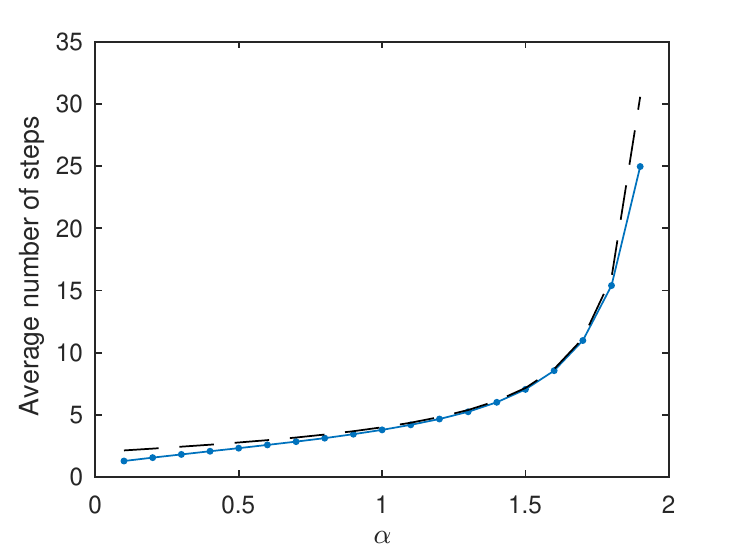}\quad
		\includegraphics{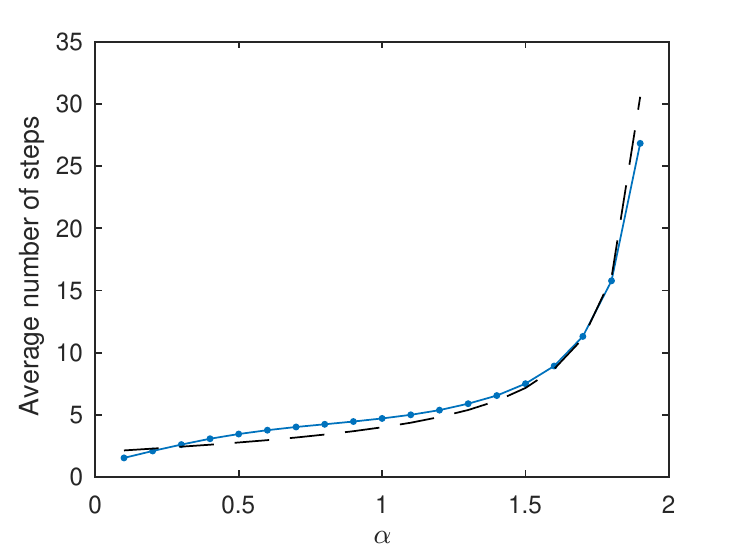}

		\caption{Mean number of steps for the walk-on-spheres algorithm started at $x=(\sqrt{0.29},-\sqrt{0.7})$ inside the circle domain (left) and inside the Swiss cheese domain (right). The dashed curve on both plots is $1/p(\alpha,2)$ as in Corollary \ref{indicators}.} \label{fig:steps}
	\end{figure}
																																																																														            
	\newpage
																																																																														            
	\section*{Appendix: Proof of Theorem \ref{hasacorr}}

	Our proof of Theorem \ref{hasacorr} uses heavily the joint conclusion of Theorems 2.10 and~3.2 in \autocite{bucur}, namely that the Theorem \ref{hasacorr} is true in the case that $D$ is a ball.  Our proof is otherwise constructive proving  existence and uniqueness separately. 
																																																																														            
	\bigskip
																																																																														            
	\noindent{\it Existence:}  On account of the fact that $D$ is bounded, we can define a ball of sufficiently large radius $R>0$, say $B^* = B(X_0, R)$, centred at $X_0$, such that $D$ is a subset of $B^*$ and hence  $\sigma_D \leq \sigma_{B^*}$ almost surely, irrespective of the initial position of $X$, where $\sigma_{B^*} = \inf\{t>0\colon X_t\not\in B^*\}$. In particular, thanks to stationary and independent increments, this upper bound for $\sigma_D$ does not depend on $X_0$ in law and $\sup_{x\in D}\mathbb{E}_x[\sigma_D]\leq \mathbb{E}_0[\sigma_{B^*}]<\infty$.

	Define for convenience $\upsilon(x)=\bc(x)$ for $x\in D^\mathrm{c}$
and	\begin{equation}
		\upsilon(x) %
		= \mathbb{E}_x[\bc(X_{\sigma_D})]%
		+ \mathbb{E}_x\left[\int_0^{\sigma_D} \rhs(X_s)\,{\rm d}s\right], \qquad x\in D,
		\label{v}
	\end{equation}
	where $\bc$ and $\rhs$ satisfy the assumptions of the theorem. We want to prove that $\upsilon$ is bounded and continuous on $\overline{D}$. For the boundedness of $\upsilon$, we prove the boundedness of the two expectations in its definition. 
																																																																													            
	First note that, for all $x\in D$,
	\begin{align}
		\mathbb{E}_x\bp{|\bc(X_{\sigma_D})|} 
		  & = \mathbb{E}_x\bp{\abs{\bc(X_{\sigma_D})}\mathbf{1}_{(\sigma_D = \sigma_{B^*})}} +  \mathbb{E}_x\bp{\abs{\bc(X_{\sigma_D})}\mathbf{1}_{(\sigma_D <\sigma_{B^*})}} \notag                     \\
		  & \leq \mathbb{E}_x\bp{\abs{\bc(X_{\sigma_{B^*}})}} + \sup_{x\in B^*\backslash D}|\bc(x)|\notag                                                                                            \\
		  & =\mathbb{E}_0\bp{|\bc(x + B^*X_{\sigma_{B(0,1)}})|} + \sup_{x\in B^*\backslash D}|\bc(x)|\notag                                                                                      \\
		  & =\pi^{-(d/2+1)}\,\Gamma(d/2)\,\sin(\pi\alpha/2)\, \int_{|y|>1} \frac{|\bc(x + B^* y)|}{\left|1-|y|^2\right|^{\alpha/2}|y|^{d}} \,{\rm d}y+ \sup_{x\in B^*\backslash D}|\bc(x)|\notag \\
		  & =C \int_{\mathbb{R}^d} \frac{|\bc(z)|}{1+|z|^{d+\alpha}} \,{\rm d}y+ \sup_{x\in B^*\backslash D}|\bc(x)|<\infty,                                                                     
		\label{worksforsquared}
	\end{align}
	for some constant $C\in(0,\infty)$ that does not depend on $x$ (this is ensured thanks to the boundedness of $D$). 
	In the  inequality, we have used the fact that, on $\{\sigma_D <\sigma_{B^*}\}$, we have $X_{\sigma_D}\in B^*\backslash D$, moreover, that, as a continuous function on $\mathbb{R}^d$, $\bc$ is bounded in $B^*\backslash D$.
	In the second equality, we have used spatial homogeneity and the scaling property of stable processes. In the third equality, we have used Theorem \ref{BGR}.
	The fourth equality follows by changing variables to $z = x + B^*y$ in the integral, appropriately estimating the denominator and  the assumption that $\bc$  is continuous and in $L^1_\alpha(D^\mathrm{c})$.
																																																																													            
	The boundedness of  $\rhs$ on $D$ and the uniform finite mean of $\sigma_D$ ensures that the second expectation in the definition of $\upsilon$ is bounded on $\overline{D}$.
	We claim that $\upsilon$ is  continuous in $\mathbb{R}^d$ and belongs to $L^1_\alpha(\mathbb{R}^d)$.
	Continuity of $\upsilon$ follows thanks to path regularity of $X$, the continuity of $\bc$, the openness of $D$ and the fact that $\omega\mapsto X_{\sigma_D}(\omega)$  and $\omega\mapsto\int_0^{\sigma_D(\omega)} \rhs(X_s(\omega))\,{\rm d}s$ are continuous in the Skorohod topology (for which it is important that $\omega\mapsto\sigma_D(\omega)$ is finite). Continuity is also a consequence of the classical potential analytic point of view, seeing the identity for $\upsilon$ in \eqref{v} in terms of Riesz potentials; see for example the classical texts of \cite{BH} or \cite{L}

	To check that  $\upsilon\in L^1_\alpha(\mathbb{R}^d)$, we need some estimates. For $x\in D^{\rm c}$, $\upsilon(x) = \bc(x)$ and hence, as $\bc\in L^1_\alpha(D^\mathrm{c})$, it suffices  to check that
	$
	\int_{D}|\upsilon(x)|/(1+|x|^{\alpha + d})\,{\rm d}x<\infty.$  However, this is trivial on account of the boundedness and continuity of $\upsilon$ on $\overline D$.

	% On account of the fact that $D$ is bounded, we can define a ball of sufficiently large radius $R>0$, say $B^* = B^*(X_0, R)$, centred at $X_0$, such that $\sigma_D \leq \sigma_{B^*}$ almost surely, irrespective of the initial position of $X$, where $\sigma_{B^*} = \inf\{t>0: X_t\not\in B^*\}$. In particular, thanks to stationary and independent increments, this upper bound for $\sigma_D$ does not depend on $X_0$ in law and $\sup_{x\in D}\mathbb{E}_x[\sigma_D]\leq \mathbb{E}_0[\sigma_{B^*}]<\infty$. 
	%To check the property \eqref{i-test}, we note that for $x\in D^{\rm c}$, $\upsilon(x) = \bc(x)$ and hence it suffices  to check that  NOT COMPLETED.

	Now fix $x'\in D$ and let $B(x')$ be the largest ball centred at $x'$ that is contained in $D$. 
	A simple application of the strong Markov property tells us that 
	\begin{align}
		\upsilon(x) & = \mathbb{E}_x\left[\mathbb{E}\left[\left.\bc(X_{\sigma_D}) + \int_0^{\sigma_{B(x')}} \rhs(X_s)\,{\rm d}s                
		+ \int_{\sigma_{B(x')}}^{\sigma_D} \rhs(X_s)\,{\rm d}s\right|\mathcal{F}_{\sigma_{B(x')}}\right]\right] \notag\\
		            & = \mathbb{E}_x\left[\upsilon (X_{\sigma_{B(x')}}) + \int_0^{\sigma_{B(x')}} \rhs(X_s)\,{\rm d}s\right], \qquad x\in D, 
		\label{localisedv}
	\end{align}
	where $(\mathcal{F}_t, t\geq 0)$ is the natural filtration generated by $X$.
	Thanks to the fact that Theorem \ref{hasacorr} is valid on balls, we see immediately that the right-hand side of \eqref{localisedv} is the unique  solution to 
	\begin{gather}
		\begin{aligned}
			-(-\Delta u)^{\alpha/2}u(x) & =-\rhs(x),        & \qquad  x & \in B(x'),         \\
			u(x)                        & = \upsilon(x), & x         & \in B(x')^{\rm c}. 
		\end{aligned} 
		\label{upsilondirichlet}
	\end{gather}
	That is to say, $\upsilon$ solves \eqref{upsilondirichlet}. Note that it is at this point in the argument that we are using the condition $\rhs\in C^{\alpha +\varepsilon}(\overline{D}) $.
	Since the solution to \eqref{upsilondirichlet} is  defined on $B(x')$ and $x'$ is chosen arbitrarily in $D$, we conclude that $\upsilon$ solves 
	\begin{gather}
		\begin{aligned}
			-(-\Delta u)^{\alpha/2}u(x) & =-\rhs(x),        & \qquad  x & \in D,         \\
			u(x)                        & = \upsilon(x), & x         & \in D^{\rm c}. 
		\end{aligned} 
		\label{upsilondirichletonD}
	\end{gather}
	On account of the fact that $\mathbb{P}_x(\sigma_D =0) = 1$ for all $x\in D^{\rm c}$, it follows that   $\upsilon = \bc$ on $D^{\rm c}$ and hence \eqref{upsilondirichletonD} is identical to \eqref{aDirichlet_g}.
																																																																													            
	\bigskip
																																																																													            
	\noindent{\it Uniqueness:}  Suppose that $\hat{u}$ solves \eqref{aDirichlet_g}, then, in particular, for any $x'\in D$, it must solve 
	\begin{gather*}
		\begin{aligned}
			-(-\Delta u)^{\alpha/2}u(x) & =-\rhs(x),       & \qquad  x & \in B(x'),         \\
			u(x)                        & = \hat{u}(x), & x         & \in B(x')^{\rm c}. 
		\end{aligned} 
	\end{gather*}
	As we know the Feynman--Kac representation of the solution to the above  fractional Poisson problem, thanks to Theorem 3.2 in \autocite{bucur} for domains which are balls, we are forced to conclude that 
	\begin{equation}
		\hat{u}(x) = \mathbb{E}_x\left[\hat{u} (X_{\sigma_{B(x')}}) + \int_0^{\sigma_{B(x')}} \rhs(X_s)\,{\rm d}s\right], \qquad x\in B(x'),\quad x'\in D.
		\label{fixedpointonspheres}
	\end{equation}
	Here again, we are implicitly using that $\rhs\in C^{\alpha +\varepsilon}(\overline{D})$ in the application of Theorem~3.2 of \autocite{bucur}.
	Let us now appeal to the same notation we have used for the walk-on-spheres. Specifically, recall the sequential exit times from maximally sized balls $\sigma_{B_k}$ for the walk-on-spheres which were defined in Section \ref{WoSfL}. We claim that 
	\[
		M_k\eqqcolon \hat{u} (X_{\sigma_{B_k}\wedge \sigma_D}) + \int_0^{\sigma_{B_k}\wedge \sigma_D }\rhs(X_s)\,{\rm d}s, \qquad k \geq 0,
	\]
	is a martingale. To see why, note that,  by the strong Markov property and then by  \eqref{fixedpointonspheres}, 
	\begin{align*}
		\mathbb{E}\left[M_{k+1}|\mathcal{G}_{k}\right] = & \, \mathbf{1}_{\{k<N\}}\left\{\left.\mathbb{E}_{x}\left[                                                                                  
		\hat{u} (X_{\sigma_{B(x)}}) + \int_0^{\sigma_{B(x)}}\rhs(X_s)\,{\rm d}s
		\right]\right|_{x = \smash{X_{\sigma_{B_k}}}} +  \int_0^{\sigma_{B_k}}\rhs(X_s)\,{\rm d}s\right\}\\
		                                                 & + \mathbf{1}_{\{k\geq N\}}\left\{                                                                                                         
		\hat{u} (X_{\sigma_D}) + \int_0^{ \sigma_D }\rhs(X_s)\,{\rm d}s
		\right\}\\
		=                                                & \, \mathbf{1}_{\{k<N\}}\left\{\hat{u}(X_{\sigma_{B_k}}) +  \int_0^{\sigma_{B_k}}\rhs(X_s)\,{\rm d}s\right\}+ \mathbf{1}_{\{k\geq N\}}\left\{ 
		\hat{u} (X_{\sigma_D}) + \int_0^{ \sigma_D }\rhs(X_s)\,{\rm d}s
		\right\}\\
		=                                                & \, \hat{u} (X_{\sigma_{B_k}\wedge \sigma_D}) + \int_0^{\sigma_{B_k}\wedge \sigma_D }\rhs(X_s)\,{\rm d}s                                      \\
		=                                                & \, M_k, \qquad k\geq 1,                                                                                                                   
	\end{align*}
	where $\mathcal{G}_k = \mathcal{F}_{\sigma_{B_k} \wedge\sigma_D}$, $k\geq 1$. For consistency, we may define $M_0 = \mathbb{E}_x[M_k] = \hat{u}(x)$ thanks to \eqref{fixedpointonspheres}.
																																																																													            
	Next, we appeal to the definition of $B^*$ and, in particular, that $\sigma_D\leq \sigma_{B^*}$, as well as the continuity of $\hat{u}$ to deduce that, for all $k\geq 0$, 
	\begin{align*}
		  & \left|\hat{u} (X_{\sigma_{B_k}\wedge \sigma_D}) + \int_0^{\sigma_{B_k}\wedge \sigma_D }\rhs(X_s)\,{\rm d}s\right| \\
		  & \leq \left|\hat{u} (X_{\sigma_{B^*}})\right|\,\mathbf{1}_{\{\sigma_{B_k}\wedge \sigma_D =  \sigma_{B^*}\}}     
		+\sup_{y\in B^*}\left|\hat{u} (y)\right|\,\mathbf{1}_{\{\sigma_{B_k}\wedge \sigma_D < \sigma_{B^*}\}} 
		+\sup_{y\in D} \left|\rhs(y)\right|\,\sigma_{D}\\
		  & \leq  \left|\bc(X_{\sigma_{B^*}})\right| + c_1 + c_2\sigma_{B^*},                                                
	\end{align*}
	where $c_1,c_2$ are constants.
	We know that for each fixed $x\in D$, $\mathbb{E}_x[\sigma_{B^*}]<\infty$ and, moreover, from Theorem \ref{BGR}, after scaling (see for example \eqref{rdy}), $\mathbb{E}_x[ |\bc(X_{\sigma_{B^*}})| ]<\infty$ as $\bc\in L^1_\alpha(D^\mathrm{c})$. Dominated convergence  allows us to deduce that $(M_k, k\geq 0)$ is a uniformly integrable martingale such that, for each fixed $x\in D$, 
	\begin{align*}
		\hat{u}(x) & = \lim_{k\to\infty}\mathbb{E}_x[M_k]                                                    \\
		           & = \mathbb{E}_x[\lim_{k\to\infty}M_k]                                                    \\
		           & = \mathbb{E}_x\left[\hat{u} (X_{\sigma_D}) + \int_0^{ \sigma_D }\rhs(X_s)\,{\rm d}s\right] \\
		           & = \mathbb{E}_x\left[\bc(X_{\sigma_D}) + \int_0^{ \sigma_D }\rhs(X_s)\,{\rm d}s\right],      
	\end{align*}
	where in the final equality we have used that $\hat{u} = \bc$ on $D^{\rm c}$. Uniqueness now follows. \hfill $\square$
																																																																													            
	\section*{Acknowledgements}     
	We would like to thank Mateusz Kwa\'sniki for pointing out a number of references to us and Alexander Freudenberg for a close reading of an earlier version of this manuscript.
	\def\bibfont{\normalfont\small}
	\printbibliography
																																																																													            
	%\bibliographystyle{plainnat}
	%\bibliography{sphere_stepping}
\end{document}